\newtheorem*{KN}{Crazy Knight's Tour Problem}
\def\G{\Gamma}
\def\Z{\mathbb{Z}}
\def\N{\mathrm{N}}
\newcommand{\probname}{Crazy Knight's Tour Problem}
\def\lcm{\mathrm{lcm}}
\def\E{\mathcal{E}}
\def\R{\mathcal{R}}
\def\C{\mathcal{C}}
\newtheorem{thm}{Theorem}[section]
\newtheorem{cor}[thm]{Corollary}
\newtheorem{prop}[thm]{Proposition}
\newtheorem{rem}[thm]{Remark}
\theoremstyle{definition}
\newtheorem{defi}[thm]{Definition}
\newtheorem{ex}[thm]{Example}
\numberwithin{equation}{section}
\def\Z{\mathbb{Z}}
\title[$\lambda$-Fold Non-zero sum Heffter arrays]{Existence of $\lambda$-Fold Non-zero sum Heffter arrays through local considerations}
\author{Simone Costa}
\address{DICATAM, Universit\`a degli Studi di Brescia, Via
Branze 43, 25123 Brescia, Italy}
\email{simone.costa@unibs.it}
\author{Stefano Della Fiore}
\address{DII, Universit\`a degli Studi di Brescia, Via
Branze 38, 25123 Brescia, Italy}
\email{s.dellafiore001@unibs.it}
\keywords{Non-zero sum Heffter Arrays, Probabilistic Method,  Biembeddings.}
\subjclass[2010]{05B20, 05D40, 05C10}
\begin{document}
\begin{abstract}
In \cite{CostaDellaFiorePasotti} was introduced, for cyclic groups, the class of partially filled arrays of the \emph{non-zero sum Heffter array} that are, as the Heffter arrays, related to
difference families, graph decompositions, and biembeddings. Here we generalize this definition to any finite groups.

Given a subgroup $J$ of order $t$ of a group $G$, a \emph{$\lambda$-fold non-zero sum Heffter array} over $G$ relative to $J$, $^\lambda\N\mathrm{H}_t(m,n; h,k)$,  is an $m \times n$ p. f. array with entries in $G$ such that:
each row contains $h$ filled cells and each column contains $k$ filled cells;
for every $x\in G\setminus J$, the sum of the occurrence of $x$ and $-x$ is $\lambda$;
the sum of the elements in every row and column is, following the natural orderings from left to right for the rows and from top to bottom for the columns, different from $0$ (in $G$).

In \cite{CostaDellaFiorePasotti}, there was presented a complete, probabilistic, solution for the existence problem in case $\lambda=1$ and $G=\Z_v$ that is the starting point of this investigation. In this paper, we will consider the existence problem for a generic value of $\lambda$ and a generic finite group $G$, and we present an almost complete solution to this problem. In particular, we will prove, through local considerations (inspired by Lov\'asz Local Lemma), that there exists a \emph{$\lambda$-fold non-zero sum Heffter array} over $G$ relative to $J$ whenever the trivial necessary conditions are satisfied and $|G|=v\geq 41$. This value can be turned down to $29$ in case the array does not contain empty cells.

Finally, we will show that these arrays give rise to biembeddings of multigraphs into orientable surfaces and we provide new infinite families of such embeddings.
\end{abstract}
\maketitle
\section{Introduction}
An $m\times n$ partially filled (p.f., for short) array on a set $\Omega$ is an $m \times n$ matrix whose elements belong to $\Omega$
and where some cells can be empty. In 2015 Archdeacon \cite {A} introduced a class of p.f. arrays which has been extensively studied:
the \emph{Heffter arrays}.
\begin{defi}\label{def:H}
A \emph{Heffter array} $\mathrm{H}(m,n; h,k)$ is an $m \times n$ p. f. array with entries in $\Z_{2nk+1}$ such that:
\begin{itemize}
\item[(\rm{a})] each row contains $h$ filled cells and each column contains $k$ filled cells;
\item[(\rm{b})] for every $x\in \Z_{2nk+1}\setminus\{0\}$, either $x$ or $-x$ appears in the array;
\item[(\rm{c})] the elements in every row and column sum to $0$ (in $\Z_{2nk+1}$).
\end{itemize}
\end{defi}
Trivial necessary conditions for the existence of an $\mathrm{H}(m,n; h,k)$ are $mh=nk$, $3\leq h\leq n$ and $3\leq k
\leq m$. If $m=n$ then $h=k$ and a square $\mathrm{H}(n,n;k,k)$ will be simply denoted by $\mathrm{H}(n;k)$.
For those kinds of arrays, the existence problem has been completely solved. Indeed, in \cite{ADDY, CDDY, DW}, it was proven the following result.
\begin{thm}\label{thm:Heffter}
An $\mathrm{H}(n;k)$ exists for every $n\geq k\geq 3$.
\end{thm}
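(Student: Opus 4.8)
The necessity of $n\ge k\ge 3$ is elementary. A row contains $k$ filled cells, so $k\le n$; moreover $k=1$ is impossible because a single entry, being a nonzero element of $\Z_{2nk+1}$, cannot equal $0$, and $k=2$ is impossible because a two-cell row summing to $0$ must have the form $\{x,-x\}$, so that \emph{both} representatives of the pair $\{x,-x\}$ occur, and a counting argument (there are $nk$ cells and $nk$ sign-pairs) then forces some other pair to have \emph{no} representative, contradicting (b). Thus the real content of the theorem is the sufficiency of $n\ge k\ge 3$, which I would establish by explicit construction.

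First I would isolate the tight (diagonal) case $k=n$, in which every cell is filled: here one can place the values $1,2,\dots,n^2$ in a fixed layout and attach signs by a pattern along rows and along columns so that consecutive blocks telescope to $0$, with a few residues of $n$ handled separately. For the remaining range $3\le k<n$ I would reduce to a \emph{banded} array by declaring the filled cells of row $i$ to be the columns $i,i+1,\dots,i+k-1$ read modulo $n$. This support automatically puts $k$ filled cells in every column and, crucially, is invariant under simultaneously shifting the row and column index by $1$; hence it suffices to prescribe the entries on a single fundamental strip and let the column conditions be inherited from the row conditions.

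The construction then splits according to the residue of $k$ modulo $4$ (with sub-cases according to $n\bmod 4$), because the feasibility of signing a run of consecutive integers so that it sums to $0$ depends on the length of the run modulo $4$: a block of four consecutive integers cancels with the sign pattern $+,-,-,+$, which makes $k\equiv 0\pmod 4$ the easiest case and $k\equiv 1,2\pmod 4$ the ones genuinely requiring the modulus $2nk+1$ rather than a zero sum in $\Z$. For each residue class I would build the banded strip from a small library of rectangular ``gadget'' subarrays with prescribed row and column sums, tiling the strip with cyclically shifted copies so that both families of sums vanish simultaneously, while distributing $\{1,\dots,nk\}$ so that exactly one of $x,-x$ is used for each pair. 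A bounded list of exceptional small pairs $(n,k)$ — those too small for the tiling to fit, or lying in awkward residue combinations near $k=n-1,n-2$ — would be settled by ad hoc direct constructions or a short exhaustive search.

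The main obstacle is the coupling between row sums and column sums: any gadget that repairs a row sum necessarily perturbs several column sums, so the gadgets must carry a built-in ``staircase'' symmetry under which the column sums telescope to $0$ once the row sums do. Making this work uniformly across all four classes of $k\bmod 4$, all classes of $n$, and in particular in the boundary regime where $k$ is close to $n$, is where essentially all of the case analysis is concentrated; I would expect the conceptual scaffolding above to be short and the verification of these cases to constitute the bulk of the argument.
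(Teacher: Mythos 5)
You should first be aware that the paper contains no proof of this statement: Theorem \ref{thm:Heffter} is imported from \cite{ADDY, CDDY, DW}, where its proof is the combined content of three research papers, so your attempt can only be judged on its own terms. On those terms it is not a proof but a research plan: every load-bearing step is introduced with ``I would'' and then deferred. No gadget subarray is actually exhibited, no sign pattern beyond the single block $+,-,-,+$ is written down, the ``bounded list of exceptional small pairs'' is neither identified nor checked, and your final paragraph concedes that ``essentially all of the case analysis'' remains to be done. That case analysis \emph{is} the theorem.

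Two of the steps you do assert are moreover wrong or unjustified as stated. First, the claim that the cyclic shift-invariance of the banded support lets ``the column conditions be inherited from the row conditions'' does not follow: the support is shift-invariant but the filling cannot be, since by condition (b) the $nk$ entries must be distinct representatives of the $nk$ pairs $\{x,-x\}$; hence the column sums must be verified separately, a difficulty you yourself acknowledge two paragraphs later when you call the row/column coupling ``the main obstacle.'' Second, the arithmetic obstruction to making the signed blocks cancel over the integers is governed by the residue of $nk$ modulo $4$ (one needs $1+2+\dots+nk$ to be even, i.e.\ $nk\equiv 0,3\pmod 4$), not by the residue of $k$ alone; for the remaining residues one must work genuinely modulo $2nk+1$, and your sketch gives no indication of how. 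Your aside on the necessity of $k\ge 3$ is correct (the counting argument ruling out $k=2$ works), but necessity is not part of the statement being proved.
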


In \cite{CPEJC} this concept has been generalized as follows.

\begin{defi}\label{def:lambdaRelative}
Let $v=\frac{2nk}{\lambda}+t$ be a positive integer,
where $t$ divides $\frac{2nk}{\lambda}$, and
let $J$ be the subgroup of $\Z_{v}$ of order $t$.
A $\lambda$-\emph{fold Heffter array $A$ over $\Z_{v}$ relative to $J$}, denoted by $^\lambda\mathrm{H}_t(m,n; h,k)$, is an $m\times n$ p.f. array
with elements in $\Z_{v}$ such that:
\begin{itemize}
\item[($\rm{a_1})$] each row contains $h$ filled cells and each column contains $k$ filled cells;
\item[($\rm{b_1})$] the multiset $\{\pm x \mid x \in A\}$ contains each element of $\Z_v\setminus J$ exactly $\lambda$ times;
\item[($\rm{c_1})$] the elements in every row and column sum to $0$ (in $\Z_v$).
\end{itemize}
\end{defi}
It is easy to see that if $\lambda=t=1$ we find again the arrays of Definition \ref{def:H}.

Classical Heffter arrays and the above generalization have been introduced also because of their vast variety of applications and
connections with other much-studied problems and concepts. In particular, there are some recent papers in which Heffter arrays
are investigated to obtain new face $2$-colorable embeddings (briefly biembeddings) see \cite{A, CDY, CMPPHeffter, CPPBiembeddings, CPEJC, CostaPasotti2, DM}.
On the other hand, several authors focused their attention on the existence problem of Heffter arrays and their variations, see \cite{ABD, ADDY, BCDY, CDDY, RelH,DW,PM2, MP,MP2,MP3}. For a complete discussion on this topic we refer the reader to the survey \cite{DP} and the references therein.

In \cite{CostaDellaFiorePasotti}, it was presented a new class of p.f. arrays, which is related to that of Heffter arrays: the non-zero sum Heffter arrays. This definition was motivated by Alspach's partial sums conjecture that deals with sets whose sums are different from $0$, see \cite{AL,ADMS, BH, HOS, O}. Here we propose a generalization of this concept to a generic finite group (as done in \cite{CPPBiembeddings} for the classical Heffter arrays).
\begin{defi}\label{def:NZS}
Let $G$ be a finite group and
let $J$ be a subgroup of $G$ of order $t$.
A $\lambda$-\emph{fold non-zero sum Heffter array $A$ over $G$ relative to $J$}, denoted by $^\lambda \N\mathrm{H}_t(m,n; h,k)$, is an $m\times n$ p.f. array
with elements in $G$ such that:
\begin{itemize}
\item[($\rm{a_2})$] each row contains $h$ filled cells and each column contains $k$ filled cells;
\item[($\rm{b_2})$] the multiset $\{\pm x \mid x \in A\}$ contains $\lambda$ times each element of $G\setminus J$;
\item[($\rm{c_2})$] the sum of the elements in every row, following the natural ordering from left to right, and column, following the natural ordering from top to bottom, is different from $0$ (in $G$).
\end{itemize}
Here if $\lambda=1$,  $t = 1$,  $m=k$ or $m=n$, we will use, respectively, the notations $\N\mathrm{H}_t(m,n;h,k)$,  $^\lambda\N\mathrm{H}(m,n;h,k)$, $^\lambda\N\mathrm{H}_t(m,n)$ and $^\lambda\N\mathrm{H}_t(n;k)$.
\end{defi}
In \cite{CostaDellaFiorePasotti}, the authors focused on the case $\lambda=1$, namely when the entries of the array are pairwise distinct and consider only cyclic groups (see also \cite{Costa} and \cite{PM} for other results on the cyclic case). In particular, they prove the existence of an $\N\mathrm{H}_t(m,n; h,k)$ over $\Z_v$ relative to $J$ whenever the trivial necessary conditions $nk=mh$, $v=\frac{2nk}{\lambda}+t$ and $t|v$ are satisfied.
In this paper we will consider the existence problem for a generic value of $\lambda$ and we present an almost complete solution to this problem.
For this purpose,  in Section \ref{necessarySection},  we will first determine more general necessary conditions for the existence problem from which it follows a complete solution for $v = 2$.
Then, in Section $3$, we will try to solve this problem with direct constructions. In particular, we will present a complete solution also when $v=3$ and in the case of square arrays over abelian groups of odd order. In Section $4$, we will revisit the construction of \cite{CostaDellaFiorePasotti} proving that there exists a $^\lambda\N\mathrm{H}_t(m,n; h,k)$ over $G$ relative to $J$ whenever the necessary conditions are satisfied provided that $h$ and $k$ are big enough if $\lambda$ is considered fixed. With this procedure, for any group $G$, we leave the problem open for an infinite number of arrays. This issue is due to the fact that, generating uniformly at random the matrix, the probability of the event that a given row (or a given column) sums to zero is not, in general, independent from all the other entries of the array. Then, inspired by the Lov\'asz Local Lemma (see \cite{LLL}), we impose a different distribution: we tessellate the array with suitable tiles and we choose the elements of a tile uniformly at random among a small prescribed set. Within these local considerations, we reduce the number of dependencies among the events mentioned above.
It will follow that there exists a \emph{$\lambda$-fold non-zero sum Heffter array} over $G$ relative to $J$ whenever the necessary conditions are satisfied and $|G|=v\geq 41$ leaving the problem open only for a finite set of groups. This value can be turned down to $29$ in case the array does not contain empty cells. Finally, in the last section of this paper, we show that these arrays give rise to biembeddings of multigraphs into orientable surfaces and we provide infinite families of such embeddings.
\section{Necessary conditions}\label{necessarySection}
In \cite{CostaDellaFiorePasotti}, it was proved that an $\N\mathrm{H}_t(m,n; h,k)$ over $\Z_v$ relative to $J$ exists whenever the trivial necessary conditions $nk=mh$, $v=\frac{2nk}{\lambda}+t$ and $t|v$ are satisfied. Those conditions can be easily generalized to the following ones.
\begin{rem}\label{necessary}
We note that, if $G$ is a group of order $v$, $J$ is a subgroup of $G$ of order $t$, then a $^\lambda\N\mathrm{H}_t(m,n; h,k)$ over $G$ relative to $J$ can exists only if the following necessary conditions hold:
\begin{itemize}
\item[(a)] $nk=mh$;
\item[(b)] $v=\frac{2nk}{\lambda}+t$;
\item[(c)] $t|v$;
\item[(d)] if $G\setminus J$ contains an involution, then $\lambda$ must be even.
\end{itemize}
Note that the latter condition is automatically satisfied in the case of cyclic groups.
\end{rem}
Now we will see that in the case of groups $\mathbb{Z}_2^r$ those conditions are not sufficient. Indeed, for such groups, we can state more strict conditions.
We begin by presenting the case $G=\mathbb{Z}_2$.
\begin{thm}\label{th:ExistenceLambdaV2}
For $m \geq k$ and $n \geq h$, there exists a $^\lambda\N\mathrm{H}(m, n; h, k)$ over $\mathbb{Z}_2$ whenever the necessary conditions of Remark \ref{necessary} are satisfied and $h\cdot k$ is odd.
\end{thm}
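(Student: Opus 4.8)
The plan is to show that over $\mathbb{Z}_2$ the stated hypotheses leave essentially no freedom in the array, so that the existence problem collapses to a classical fact about $(0,1)$-matrices with constant margins.

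First I would unwind the numerical conditions. With $G=\mathbb{Z}_2$ we have $v=2$ and $t=1$, so condition (b) of Remark~\ref{necessary} forces $\lambda=2nk$, while condition (a) says that $A$ has $mh=nk$ filled cells. Since $-x=x$ in $\mathbb{Z}_2$, the multiset $\{\pm x\mid x\in A\}$ is exactly the multiset of entries of $A$ taken with multiplicity two; as $G\setminus J=\{1\}$, condition $(\mathrm{b_2})$ requires that $1$ occur there precisely $\lambda=2nk$ times, i.e.\ that $A$ have exactly $nk=mh$ entries equal to $1$. As this equals the total number of filled cells, \emph{every} filled cell of $A$ must contain $1$. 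Hence $A$ is completely determined by its support, which by $(\mathrm{a_2})$ has to be an $m\times n$ $(0,1)$-pattern with exactly $h$ cells in each row and exactly $k$ in each column.

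Next I would produce such a pattern. Its existence, given $mh=nk$ and $k\le m$ (equivalently $h\le n$, which follows from $mh=nk$ together with $m\ge k$), is the $(0,1)$-matrix case of the Gale--Ryser theorem; explicitly, one can partition $\{0,1,\dots,nk-1\}$ by sending a position $p$ to row $p\bmod m$ and to column $\lfloor p/k\rfloor$, and then fill cell $(i,j)$ exactly when row $i$ and column $j$ share a position, i.e.\ when $i+\ell m\in[jk,(j+1)k)$ for some integer $\ell\ge 0$. Each column owns $k\le m$ consecutive positions and so meets each residue class modulo $m$ at most once, whence every row--column pair shares at most one position; counting positions then gives precisely $h$ filled cells in each row and $k$ in each column. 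Finally, with all filled entries equal to $1$, the sum along any row is $h\bmod 2$ and along any column is $k\bmod 2$; since $hk$ is odd both of these equal $1\ne 0$ in $\mathbb{Z}_2$, so $(\mathrm{c_2})$ holds and $A$ is the required $^\lambda\N\mathrm{H}(m,n;h,k)$ over $\mathbb{Z}_2$.

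I do not expect any genuine obstacle: the heart of the argument is the arithmetic observation that forces $A$ to be all-ones, after which the construction and verification are routine. I would also record the companion remark this suggests — namely that $hk$ odd is not merely an extra hypothesis but a genuine constraint — since the same reasoning shows any $^\lambda\N\mathrm{H}(m,n;h,k)$ over $\mathbb{Z}_2$ is forced to be all-ones and hence has row and column sums $\equiv h,k\pmod 2$, both of which must be nonzero; this explains why the conditions of Remark~\ref{necessary} alone do not suffice for $\mathbb{Z}_2$.
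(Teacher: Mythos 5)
Your proposal is correct and follows essentially the same route as the paper: fill a skeleton with constant row and column counts entirely with the element $1$ of $\mathbb{Z}_2$, so that every row sums to $h\bmod 2$ and every column to $k\bmod 2$, both nonzero since $hk$ is odd. The only difference is that the paper obtains the skeleton by citing Lemma~4.1 of \cite{CostaDellaFiorePasotti}, whereas you construct it explicitly (and your preliminary observation that condition $(\mathrm{b_2})$ forces the array to be all-ones, making $hk$ odd genuinely necessary, matches the paper's subsequent remark on the case $v=2$).
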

\begin{proof}
Thanks to Lemma 4.1 of \cite{CostaDellaFiorePasotti}, we know that there exists an $m \times n$ partially filled array $A$ that has exactly $h$ filled cells in each row and $k$ filled cells in each column. Then the theorem trivially follows since we can fill the array $A$ only with the element $1$ of $\mathbb{Z}_2$ that is an involution.
\end{proof}

To present our result on groups $\mathbb{Z}_2^r$ it will be useful to introduce some notation.
The rows and the columns of an $m\times n$ array $A$ are denoted by $R_1,\ldots, R_m$ and by $C_1,\ldots, C_n$, respectively. Also, we name by $\E(A)$, $\E(R_i)$, $\E(C_j)$ the list of the elements of the filled cells of $A$, of the $i$-th row and of the $j$-th column, respectively. Then we denote by $skel(A)$ the \emph{skeleton} of $A$, that is the set of the filled positions of $A$. Also, given a multiset $M$, we define $M^{\lambda}$ to be the multiset in which each element of $M$ (counted with its multiplicity) is repeated $\lambda$ times. 
\begin{thm}\label{z2h}
Let $G$ be isomorphic to $\mathbb{Z}_2^r$ and let $J$ be a proper subgroup of $G$ that is isomorphic to $\mathbb{Z}_2^{r_1}$. Then there exists a $^\lambda\N\mathrm{H}_t(1,n)$ over $G$ relative to $J$ if and only if the necessary conditions of Remark \ref{necessary} are satisfied and one of the following conditions holds:
\begin{itemize}
\item[1)] $t=2$ (i.e. $r_1=1$) and $n$ is an odd multiple of $v-t$;
\item[2)] $v=2$ (i.e. $r=1$ and $r_1=0$)  and $n$ is odd.
\end{itemize}
\end{thm}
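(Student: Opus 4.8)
The plan is to reduce conditions $(\mathrm{b}_2)$ and $(\mathrm{c}_2)$ for a $1\times n$ array over an elementary abelian $2$-group to a single linear equation, and then to settle that equation by a short subgroup-sum computation.

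First I would record what the necessary conditions impose on a putative array $A$. Since here $m=k=1$, Remark~\ref{necessary}(a) gives $h=n$ and Remark~\ref{necessary}(b) gives $n=\frac{\lambda}{2}(v-t)$, so $\lambda$ is even (this is also forced by Remark~\ref{necessary}(d), because $J$ is proper and every non-identity element of $\mathbb{Z}_2^r$ is an involution). A counting argument then pins down $A$ completely: as $x=-x$ for all $x\in G$, the multiset $\{\pm x\mid x\in A\}$ is just each entry of $A$ taken twice, it has $2n=\lambda(v-t)$ members, and it must contain each of the $v-t$ elements of $G\setminus J$ exactly $\lambda$ times; this leaves no room for elements of $J$, so every entry of $A$ lies in $G\setminus J$ and each element of $G\setminus J$ occurs exactly $\frac{\lambda}{2}$ times in $A$.

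Next I would reduce $(\mathrm{c}_2)$. Since $G$ is abelian, the natural left-to-right and top-to-bottom orderings are irrelevant; since every column of a $1\times n$ array is a single cell whose entry lies in $G\setminus J$ (hence is nonzero), the column part of $(\mathrm{c}_2)$ holds automatically. Thus a $^\lambda\N\mathrm{H}_t(1,n)$ exists if and only if the multiset consisting of each element of $G\setminus J$ with multiplicity $\frac{\lambda}{2}$ can be arranged so that its sum is nonzero; as that sum is independent of the ordering, this is simply the requirement
\[
\frac{\lambda}{2}\sum_{x\in G\setminus J}x\neq 0,
\]
the converse implication (construct $A$ by writing those elements in any order) being immediate. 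So everything comes down to deciding when the displayed inequality holds. For this I would use the elementary fact that $\sum_{x\in H}x=0$ for a subgroup $H\cong\mathbb{Z}_2^s$ unless $s=1$ (in which case it equals the involution of $H$), and write $\sum_{x\in G\setminus J}x=\sum_{x\in G}x-\sum_{x\in J}x$ in characteristic $2$: this sum is nonzero precisely when exactly one of $r,r_1$ equals $1$; since $J$ is proper, $r_1<r$, so this means either $r=1$ (hence $r_1=0$, i.e.\ $v=2$, $t=1$) or $r_1=1$ (i.e.\ $t=2$). Multiplying by the integer $\frac{\lambda}{2}$ in $\mathbb{Z}_2^r$ then gives $0$ when $\frac{\lambda}{2}$ is even and the same element when $\frac{\lambda}{2}$ is odd, and, since $n=\frac{\lambda}{2}(v-t)$, the condition ``$\frac{\lambda}{2}$ odd'' is exactly ``$n$ is an odd multiple of $v-t$'' (which reads ``$n$ odd'' when $v=2$, $t=1$). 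Assembling these cases yields precisely conditions~1) and~2), while in every remaining case ($t=1$ with $v>2$, or $t\ge 4$) the sum $\sum_{x\in G\setminus J}x$ vanishes and no array can exist.

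I do not anticipate a serious obstacle here; the only delicate point is the bookkeeping in the last step — keeping the parity of $\frac{\lambda}{2}$ in step with the two phrasings in the statement and checking that the leftover values of $t$ really do force $\sum_{x\in G\setminus J}x=0$ — and a clean packaging of the subgroup-sum fact. Everything else is routine.
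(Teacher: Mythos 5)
Your proposal is correct and follows essentially the same route as the paper: both arguments observe that the array's content is forced to be $(G\setminus J)^{\lambda/2}$, that the row sum is order-independent and equals $\frac{\lambda}{2}\sum_{g\in G\setminus J}g$, and then decide nonvanishing via the standard subgroup-sum fact for $\mathbb{Z}_2^s$. The only cosmetic difference is that you treat the $v=2$ case uniformly inside the same computation, whereas the paper delegates it to its Theorem~\ref{th:ExistenceLambdaV2}.
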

\proof
It is easy to check that the sum of the elements of $\mathbb{Z}_2^r$ is non-zero only if $r=1$. It follows that the sum of the elements of $G\setminus J$ is non-zero only when $t$ or $v$ are equal to $2$.

Let us assume that there exists $^\lambda\N\mathrm{H}_t(1,n)$ over $G$ relative to $J$ and let us denote it by $A$.
Since $G\setminus J$ only contains involutions we have that:
$$\E(A)=\E(R_1)=(G\setminus J)^{\lambda/2}.$$
Clearly, $A$ can exist only if the sum of the elements of $G\setminus J$ is non-zero. So we can assume that $t=2$ or $v=2$.

CASE 1: $t=2$. In this case, the sum of the elements of $G\setminus J$ is non-zero but it is an involution of $G$ ($G\setminus \{0\}$ only contains involution). It follows that
$$\sum_{a\in \E(R_1)}a=\lambda/2\sum_{g\in G\setminus J}g\not=0 $$
only if $\lambda/2$ is odd, or equivalently, if $n$ is an odd multiple of $v-t$.

The thesis follows since the case $v=2$ has already been considered in Theorem \ref{th:ExistenceLambdaV2}.
\endproof

\begin{rem}
It must be noted that, for $v=2$ and for the arrays of just one row/column,  the necessary conditions of Remark \ref{necessary} are not sufficient.  Theorems \ref{th:ExistenceLambdaV2} and \ref{z2h} are the only theorems presented in this paper where we need more restricted conditions.  Hence we believe that the smallest cases of $v$ (and the arrays with just one row/column) are, somehow, intrinsically different from the general case.
\end{rem}

\section{Some direct constructions}
The goal of this section would have been to solve this problem with direct constructions. Even though we fail to achieve this goal we are able to present a complete direct solution when $v=3$ and in the case of square arrays over abelian groups of odd order.
\subsection{A complete solution for $v=3$}
\begin{thm}\label{th:ExistenceLambdaV3}
There exists a $^\lambda\N\mathrm{H}(m, n; h, k)$ over $\mathbb{Z}_3$ whenever the necessary conditions of Section \ref{necessarySection} are satisfied.
\end{thm}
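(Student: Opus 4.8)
The plan is to reduce the statement to a purely combinatorial fact about fillings of a skeleton, and then to prove that fact by a short case analysis on the residues of $h$ and $k$ modulo $3$. For $v=3$ the conditions of Remark \ref{necessary} leave almost no freedom: since $t\mid 3$ and $G\setminus J\neq\emptyset$ we must have $t=1$, so $J=\{0\}$ and $G\setminus J=\{1,2\}$; condition (d) is vacuous because $\mathbb{Z}_3$ has no involutions; and (a) and (b) force $\lambda=nk=mh$. Since $-1=2$ and $-2=1$ in $\mathbb{Z}_3$, for any array $A$ all of whose entries lie in $\{1,2\}$ the multiset $\{\pm x\mid x\in A\}$ contains each of $1$ and $2$ exactly as many times as $A$ has filled cells; so if $A$ satisfies ($\mathrm{a}_2$) with entries in $\{1,2\}$ it automatically has $mh=nk=\lambda$ filled cells and ($\mathrm{b}_2$) holds for free. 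Thus it suffices to build an $m\times n$ array with entries in $\{1,2\}$, $h$ filled cells per row and $k$ per column, all of whose row and column sums are $\not\equiv 0\pmod 3$.

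If $h\not\equiv 0\pmod 3$ and $k\not\equiv 0\pmod 3$ this is immediate: take any skeleton with the prescribed degrees (one exists by Lemma 4.1 of \cite{CostaDellaFiorePasotti}) and put $1$ in every cell, so that each row sums to $h$ and each column to $k$. Hence assume $3\mid h$ or $3\mid k$, which forces the corresponding parameter to be at least $3$. Here I would start again from the all-$1$ filling and upgrade the entries of a carefully chosen sub-skeleton $T$ from $1$ to $2$: since turning a $1$ into a $2$ adds $1$ to both its row sum and its column sum, if $T$ meets row $i$ in $\rho_i$ cells and column $j$ in $\sigma_j$ cells the new line sums become $h+\rho_i$ and $k+\sigma_j$. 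So the goal is to choose integers $0\le\rho_i\le h$, $0\le\sigma_j\le k$ with $\rho_i\not\equiv -h$ and $\sigma_j\not\equiv -k\pmod 3$ and $\sum_i\rho_i=\sum_j\sigma_j$, and then realise these numbers as the row/column intersection numbers of a set $T$ lying inside a skeleton of degrees $h,k$; concretely one realises, by a Gale--Ryser type argument, a partial array with row sums $\rho_i$ and column sums $\sigma_j$ and then enlarges it to a full skeleton by adding $h-\rho_i$ cells in row $i$ and $k-\sigma_j$ in column $j$, which is possible because the deficiency totals $mh-\sum_i\rho_i$ and $nk-\sum_j\sigma_j$ coincide.

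Choosing the targets is elementary. Writing $\mu=m/\gcd(m,n)$, $\nu=n/\gcd(m,n)$, the identity $nk=mh$ gives $h=\nu e$, $k=\mu e$ for some $e\geq 1$, and $h\le n$, $k\le m$ give $e\le\gcd(m,n)$. When $3\nmid\mu\nu$ one can take all $\rho_i=\nu$ and all $\sigma_j=\mu$: these lie in the allowed ranges, have equal totals $m\nu=n\mu$, and are $\not\equiv 0\pmod 3$, and they are realised by a bi-regular subskeleton. When $3\mid\mu\nu$ every proportional choice has a coordinate $\equiv 0$, so instead I would use nearly constant vectors with one perturbed coordinate, e.g.\ $(\rho,\dots,\rho,\rho')$ and $(\sigma,\dots,\sigma,\sigma')$, tuned to repair the divisibility while preserving $\sum_i\rho_i=\sum_j\sigma_j$ (using that $h,k$ are multiples of $3$ and at least $3$, so there is room). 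In each case one checks the Gale--Ryser inequalities, fills $T$ with $2$'s and the rest of the skeleton with $1$'s, and verifies the line sums.

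The hard part will be exactly this last bookkeeping: arranging that $(\rho_i)$ and $(\sigma_j)$ are simultaneously legal modulo $3$, equal in total, \emph{and} realisable --- together with their complementary deficiency vectors --- inside one common skeleton. Since we may take the targets as close to constant as the mod-$3$ conditions allow, the relevant inequalities hold with slack except in a short list of extremal configurations, essentially those with $m$ or $n$ very small or with $h=n$, $k=m$ (a completely filled array) and $m\neq n$; I would settle those finitely many shapes by writing the array down explicitly.
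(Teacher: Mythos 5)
Your reduction is correct and matches the paper's: for $v=3$ the necessary conditions force $\lambda=nk=mh$, the entries may be taken in $\{1,2\}=\{1,-1\}$, condition $(\mathrm{b}_2)$ is then automatic, and everything comes down to producing a skeleton with line degrees $h,k$ together with a $\pm1$ filling whose line sums are nonzero modulo $3$; the sub-case $3\nmid hk$ with the all-$1$ filling is exactly what the paper does implicitly. The gap is in the step you yourself flag as ``the hard part''. You fix an arbitrary skeleton, choose target intersection numbers $\rho_i,\sigma_j$, realise $T$ by Gale--Ryser, and then assert that $T$ can be enlarged to a skeleton with degrees $(h,k)$ ``because the deficiency totals coincide''. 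Equality of the totals $\sum_i(h-\rho_i)=\sum_j(k-\sigma_j)$ is necessary but far from sufficient: you are realising a second bipartite degree sequence edge-disjointly from $T$, i.e.\ a degree-sequence problem with forbidden cells, and no criterion for this is stated or verified. Likewise the choice of perturbed vectors when $3\mid\mu\nu$ --- that they stay in range, keep equal totals, satisfy the congruences, and remain realisable --- is only described in outline; and the ``finitely many'' extremal shapes you defer include the entire infinite family of totally filled rectangles with $m\neq n$. As written this is a plausible programme, not a proof.

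The paper sidesteps the realisation problem entirely by not fixing the skeleton first: it builds $B$ explicitly as a union of translates of a $k/r\times h/r$ block arranged along a staircase, and then takes the flipped set to be the union of the \emph{full} rows $R_k,R_{2k},\dots$ (and, if $3\mid h$, the full columns $C_h,C_{2h},\dots$). The structure of $B$ guarantees that every column meets the flipped rows in exactly $1$ or $2$ cells and every row meets the flipped columns in $1$ or $2$ cells, which is all the mod-$3$ computation needs; no degree-sequence argument occurs. If you want to salvage your version, the cleanest fix is the same move: choose the skeleton and the subset $T$ together, so that the intersection counts are forced by the construction rather than prescribed in advance and then realised.
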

\begin{proof}
Since $nk=mh$ we have that $\frac{nk}{\lcm(m,n)}$ is an integer and we denote it by $r:=\frac{nk}{\lcm(m,n)}$.
Then we have that also the following quantities are integers
$$\frac{k}{r}=\frac{\lcm(m,n)}{n} \quad \mbox{ and } \quad \frac{h}{r}=\frac{\lcm(m,n)}{m}.$$
Hence we can define the following set of cells
$$Q:=\{(i,j):\ 1\leq i\leq k/r; \ 1\leq j\leq h/r\}.$$
Then, since
$$\frac{r\cdot n}{h}=\frac{r\cdot m}{k}=\frac{mn}{\lcm(m,n)}=\gcd(m,n),$$
we can consider the subset of the $m\times n$ array defined by:
$$B:= \bigcup_{j=0}^{r-1} \left(\bigcup_{i=0}^{r n/h-1} Q+ j (0,h/r)+i(k/r,h/r)\right)$$
where all the arithmetic on the row and column indices is performed modulo $m$ and $n$ respectively and the sets of reduced residues are $\{1,2,\ldots, m\}$ and $\{1,2,\ldots,n\}$.

Here,  we fill all the cells of $B$ with $1$'s obtaining the array $A_1$. Now, if $3 \mid k$ then we replace all the elements in the set of cells $H_1$ by $-1$'s where the indexes are considered modulo $m$ and where
$$
H_1 := \bigcup_{i=1}^{\lceil n/h\rceil} skel(R_{ik}).
$$
We denote by $A_2$ the array so defined. Here, if $3$ does not divide $k$ we set $A_2=A_1$.
Since every column intersects the rows of $H_1$ in either $1$ or $2$ cells, the sum of the elements in any given column of $A_2$ is non-zero.

Similarly, if $3 \mid h$ then we change the sign to all elements of $A_2$ in the set of cells $H_2$, i.e. the $1$'s are replaced by $-1$'s and vice versa, where
$$
H_2 := \bigcup_{j=1}^{\lceil m/k\rceil} skel(C_{jh}).
$$
We denote by $A$ the array so defined. Here, if $3$ does not divide $h$ we set $A=A_2$.
Since every row intersects the columns of $H_2$ in either $1$ or $2$ cells, the sum of the elements in any given row of $A$ is non-zero. Moreover, since every column of $A$ intersects $H_2$ in either $0$ or $k$ cells and the sum of the elements in any given column of $A_2$ is non-zero, also the sum of all the elements in any column of $A$ is non-zero. Hence, we obtain a $^\lambda \N\mathrm{H}(m, n;h, k)$ over $\mathbb{Z}_3$ such that $skel(A)=B$.
\end{proof}
\begin{ex}
In the following we provide an example of the sets $B$, $H_1$ and $H_2$ for $m=n=7$ and $k=h=3$. The black dots represent the cells contained in the set $B$, the green cells are the ones contained in $H_1 \cap H_2$, the red cells are the ones contained in $H_1 \setminus H_2$ and finally, the yellow cells are the ones in $H_2 \setminus H_1$.
\begin{center}
$\begin{array}{|r|r|r|r|r|r|r|}
\hline \bullet & \cellcolor{yellow!30}\bullet & \cellcolor{yellow!30} \bullet & \:\: & \:\: & \:\: & \:\: \\
\hline & \cellcolor{green!30}\bullet & \cellcolor{green!30} \bullet & \cellcolor{red!30}\bullet & \:\: & \:\: & \:\: \\
\hline & & \cellcolor{green!30} \bullet & \cellcolor{red!30}\bullet & \cellcolor{red!30}\bullet & \:\: & \:\: \\
\hline \:\: & \:\: & \:\: & \bullet & \bullet & \cellcolor{yellow!30} \bullet & \:\:\\
\hline & & & \:\: & \bullet & \cellcolor{yellow!30} \bullet & \bullet \\
\hline \cellcolor{red!30}\bullet & & & \:\: & \:\: & \cellcolor{green!30} \bullet & \cellcolor{red!30}\bullet \\
\hline \bullet & \cellcolor{yellow!30}\bullet & \:\: & \:\: & \:\: & \:\: & \bullet \\
\hline
\end{array}$
\end{center}
Then, applying the proof of Theorem \ref{th:ExistenceLambdaV3} we get the following $^{21} \N\mathrm{H}(7; 3)$ over $\mathbb{Z}_3$.
\begin{center}
$\begin{array}{|r|r|r|r|r|r|r|}
\hline 1 & -1 & -1 & \:\: & \:\: & \:\: & \:\: \\
\hline & 1 & 1 & -1 & \:\: & \:\: & \:\: \\
\hline & & 1 & -1 & -1 & \:\: & \:\: \\
\hline \:\: & \:\: & \:\: & 1 & 1 & -1 & \:\:\\
\hline & & & \:\: & 1& -1 & 1 \\
\hline -1 & & & \:\: & \:\: & 1& -1 \\
\hline 1 & -1 & \:\: & \:\: & \:\: & \:\: & 1 \\
\hline
\end{array}$
\end{center}
\end{ex}
\begin{ex}
Applying the proof of Theorem \ref{th:ExistenceLambdaV3} we get the following $^{12} \N\mathrm{H}(4, 6; 3, 2)$ over $\mathbb{Z}_3$.
\begin{center}
$\begin{array}{|r|r|r|r|r|r|r|}
\hline 1 & 1 & -1 & \:\: & \:\: & \:\: \\
\hline 1 & 1 & -1 & \:\: & \:\: & \:\:\\
\hline \:\: & \:\: & \:\: & 1 & 1 & -1\\
\hline \:\: & \:\: & \:\: & 1 & 1 & -1\\
\hline
\end{array}$
\end{center}
\end{ex}

\subsection{On abelian groups $G$ of odd order}

\begin{thm}\label{th:ExistenceLambdaAbelianNoInv}
Let $G$ be a finite abelian group of odd order $v$ and let $J$ be a subgroup of $G$ of order $t$. For $n\geq k$, there exists a $^\lambda\N\mathrm{H}_t(n; k)$ over $G$ relative to $J$ whenever the necessary conditions of Section \ref{necessarySection} are satisfied.
\end{thm}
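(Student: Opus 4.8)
The plan is to strip away the special features of the odd-order abelian case, reduce to a clean placement problem, solve it with an explicit diagonal filling, and finish by local sign changes. Throughout, a \emph{line} means a row or a column.

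\emph{Reductions.} As $v$ is odd, $G$ has no involution: condition~(d) of Remark~\ref{necessary} is automatic, $x\neq-x$ for every $x\in G\setminus J$, and $G\setminus J$ is the disjoint union of $N:=(v-t)/2$ pairs $\{x,-x\}$. As $G$ is abelian, the left-to-right and top-to-bottom orderings in condition~(c$_2$) are irrelevant, so it is enough to make each of the $2n$ line sums nonzero. Fix a transversal $P$ of the $N$ pairs, so that $P\sqcup(-P)=G\setminus J$ as multisets, and set $M:=(G\setminus J)^{\lfloor\lambda/2\rfloor}$ if $\lambda$ is even and $M:=(G\setminus J)^{\lfloor\lambda/2\rfloor}\sqcup P$ if $\lambda$ is odd. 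Then $|M|=\lambda N=nk$ and $M\sqcup(-M)=(G\setminus J)^{\lambda}$. Hence the theorem follows once the $nk$ elements of $M$ can be placed, one per cell, in some $n\times n$ array that is $k$-regular on rows and on columns so that no line sum is $0$: changing the sign of an entry leaves $M\sqcup(-M)=(G\setminus J)^{\lambda}$ unchanged and so preserves condition~(b$_2$).

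\emph{An explicit filling.} For the skeleton I take the diagonal pattern $D$ whose filled cells are the $(i,j)$ with $j-i\bmod n\in\{0,\dots,k-1\}$; it is $k$-regular and decomposes into $k$ broken diagonals $\Delta_0,\dots,\Delta_{k-1}$, each of length $n$. I fill $D$ one diagonal at a time, writing along $\Delta_\ell$ a prescribed cyclic shift of a length-$n$ word over $G\setminus J$, the words being chosen so that the disjoint union of their underlying multisets is $M$. The cleanest instance takes the word on $\Delta_\ell$ to be the shift by $\ell$ of a single word $\phi\colon\Z_n\to G\setminus J$; then each column of $D$ is constant, every column sum equals $k\,\phi(c)$, condition~(b$_2$) holds exactly when $k\mid\lambda$ and $\phi$ meets each pair $\lambda/k$ times up to sign, and every row sum is a window sum $\sum_{\ell=0}^{k-1}\phi(i+\ell)$. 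So when $k\mid\lambda$ the task reduces to choosing $\phi$ with no length-$k$ window summing to $0$ (and flipping one cell of each column $c$ with $k\,\phi(c)=0$). For the remaining residues of $\lambda\bmod k$ I would distinguish cases according to the divisibility relations among $k,\lambda,t,v$ and, in each, keep all but a bounded number of the diagonal words of the ``shift of $\phi$'' type, choosing the few exceptional ones explicitly and using that the doubling map $x\mapsto 2x$ of $G$ is a bijection, so that only $O(1)$ lines carry a sum not visibly equal to a fixed nonzero element.

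\emph{Sign corrections and the main obstacle.} It remains to repair the boundedly many lines whose sum might still be $0$. Replacing an entry $x$ by $-x$ changes the sum of its row and of its column by $-2x\neq 0$ --- here $|G|$ odd is essential --- and nothing else, so a single flip fixes a bad line at the cost of touching one other line. The hard part, and the main obstacle, is to schedule the flips so that no new vanishing line sum is ever produced: I would choose the flipped cells in transversal position (no two in a common line), occasionally using a pair of flips within one line to adjust it without disturbing its partner, and I would deal separately with the tight end of the parameter range, $\lambda=1$, where $M=P$ is a bare pair-transversal and there is the least room to manoeuvre, by an ad hoc construction in the spirit of the proof of Theorem~\ref{th:ExistenceLambdaV3}. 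Once every line sum is nonzero, the resulting array is the desired $^\lambda\N\mathrm{H}_t(n;k)$ over $G$ relative to $J$.
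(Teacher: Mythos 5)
Your reductions are sound and match the paper's: oddness of $v$ kills involutions, abelianity makes the orderings in (c$_2$) irrelevant, and one only needs to place a multiset $M$ with $M\sqcup(-M)=(G\setminus J)^{\lambda}$ on a $k$-regular diagonal skeleton so that all $2n$ line sums are nonzero, with sign flips as the repair tool. The skeleton you choose (the $k$ consecutive broken diagonals) is exactly the paper's $B$. But from there the proposal has two genuine gaps, both of which you yourself flag as unresolved. First, your ``cleanest instance'' (every diagonal a shift of one word $\phi$, hence constant columns) only satisfies (b$_2$) when $k\mid\lambda$, and the escape route for the other residues of $\lambda\bmod k$ --- ``distinguish cases \dots choosing the few exceptional ones explicitly'' --- is not an argument. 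Second, and more seriously, the flip-scheduling problem that you call ``the hard part, and the main obstacle'' is precisely the content of the theorem, and it is left open: you never exhibit a mechanism guaranteeing that a flip repairing a row cannot create a newly vanishing column sum, and the $\lambda=1$ case is deferred to an unspecified ad hoc construction.

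The paper closes both gaps with one device that your proposal is missing. The filling outside of two designated cells per column is completely arbitrary (so (b$_2$) is free for every $\lambda$, no divisibility cases arise), and in each column $C_i$ two cells are reserved: $y_i$ at $(i,i)$ and $x_i$ at $(i+1,i)$, chosen from the transversal so that $y_i\pm x_i\neq 0$ (possible as soon as $(v-t)/2\geq 2$). A short case analysis using that $G$ is abelian and involution-free shows that one of the two signs of $x_i$ makes the column sum nonzero \emph{for both} signs of $y_i$. Columns are then fixed by choosing the signs of the $x_i$, and afterwards each bad row $R_i$ is fixed by flipping its unique designated cell $y_i$ at $(i,i)$ --- a flip that by construction cannot spoil $C_i$. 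This decouples the row repairs from the column repairs entirely and is exactly the scheduling argument your proposal lacks; the leftover case $v-t<4$, i.e.\ $v=3$, is handled by the direct construction of Theorem~\ref{th:ExistenceLambdaV3}, as you anticipated. Without an analogue of the $(x_i,y_i)$ trick, your outline does not yet constitute a proof.
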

\begin{proof}
First of all, we consider the case where $v-t\geq 4$.

Let $Q$ be the set of cells given by $Q:=\{(i,1):\ 1\leq i\leq k\}$.
We consider the subset of the $n \times n$ array defined by:
$$B:=\bigcup_{i=0}^{n-1} \left( Q+i(1,1)\right)$$
where the sum is considered modulo $n$ and the set of reduced residues is $\{1,2,\ldots,n\}$.

Let us order the elements of $G\setminus J$ as $g_1,\dots,g_{v-t}$ with the additional property that if $x\in \{g_1,\dots,g_{\frac{v-t}{2}}\}$ then $-x\in \{g_{\frac{v-t}{2}+1},\dots,g_{v-t}\}$. This is possible since $v$ is odd and hence also $t$ must be odd. Then we consider the set $\Omega=\{g_1,\dots,g_{\frac{v-t}{2}}\}^{\lambda}$ and an array $A_1$ such that $skel(A_1)=B$ and $\E(A_1)=\Omega$.

Clearly, if $k=1$, however we fill $B$ with the elements of $\Omega$, we obtain a $^\lambda\N\mathrm{H}_t(n; k)$. So we may assume that $k\geq 2$.
In this case, for each $i \in [1,n]$,  we denote by $y_i$ and $x_i$ the elements of the $i$-th column $C_i$ that are in positions $(i, i)$, $(i+1, i)$ respectively (where the sum is considered modulo $n$). Since $\frac{v-t}{2}\geq 2$, we can choose $y_i$ in such a way that ${y_i\pm x_i\not=0}$. It suffices to set $x_i=g_{2i+1}$ and $y_i=g_{2i}$ where the indexes are considered modulo $\frac{v-t}{2}$ and then to choose,  arbitrarily among the remaining elements of $\Omega$, the rest of the array.

We claim that there exists a choice between $x_i$ and $-x_i$ such that the sum of the elements in the $i$-th column is different from zero either using $y_i$ or $-y_i$.

Indeed, if $x_i$ does not have this property, we have that either
\begin{equation}\label{sommazero1}	\left(\sum_{a \in \E(C_i) \setminus \{x_i, y_i \}} a \right)+ x_i + y_i = 0
\end{equation}
or
\begin{equation}\label{sommazero2}	\left(\sum_{a \in \E(C_i) \setminus \{x_i, y_i \}} a \right)+ x_i - y_i = 0.
\end{equation}

If equation \eqref{sommazero1} holds then we have that
$$
\left(\sum_{a \in \E(C_i) \setminus \{x_i, y_i \}} a \right)- x_i + y_i \not = 0.$$
Moreover, since the group $G$ is involution-free (its order is odd) and since $y_i \neq - x_i$ by construction, we have that 
$$
	x_i + y_i \neq - \left(x_i + y_i\right) = -x_i - y_i,
$$
where in the last equality we used the abelianility of $G$.  Hence we also have that
$$ \left(\sum_{a \in \E(C_i) \setminus \{x_i, y_i \}} a \right)- x_i - y_i \not = 0.
$$

If equation \eqref{sommazero2} holds, instead, we have that
$$
\left(\sum_{a \in \E(C_i) \setminus \{x_i, y_i \}} a \right)- x_i - y_i \not= 0.$$
Moreover,  as done before,  since the group $G$ is abelian and involution-free (its order is odd) and since $y_i \neq x_i$ by construction, we also have that
$$\left(\sum_{a \in \E(C_i) \setminus \{x_i, y_i \}} a \right)- x_i + y_i \not= 0.
$$

Based on this simple argument, we can assume that the element $x_i$ in position $(i+1,i)$ is such that the sum of the elements in the $i$-th column $C_i$ is different from zero either using $y_i$ or $-y_i$. We denote by $A_2$ the array so defined.
Then, we replace the element $y_i$ in position $(i,i)$ of $A_2$ by $-y_i$ for each row $R_i \in [1,n]$ that sums to zero. It follows that we obtain a $^\lambda \N\mathrm{H}_t(n;k)$ over $G$ relative to $J$ whenever $v-t\geq 4$.

Now we observe that, if $v\geq 5$ is odd, we have that $v-t\geq 4$ for any proper divisor $t$ of $v$. The thesis follows since the case $v=3$ has been already considered in Theorem \ref{th:ExistenceLambdaV3}.
\end{proof}

\section{Our Method}
The direct constructions presented in the previous section solve only some particular instances of the existence problem. On the other hand, we do not believe it would be possible to adapt these ideas to the general case since in both Theorem \ref{th:ExistenceLambdaV3} and Theorem \ref{th:ExistenceLambdaAbelianNoInv} we use the fact that the group $G$ is involution free. Therefore, in order to attack this existence problem, we develop now a local variant (inspired by Lov\'asz Local Lemma) of the probabilistic method used in \cite{CostaDellaFiorePasotti}.
\subsection{Bound with Expected values}
First of all, we revisit the method of \cite{CostaDellaFiorePasotti} in order to obtain some sufficient conditions on the existence of a $\lambda$-fold non-zero sum Heffter array.

\begin{thm}\label{th:ExistenceLambdaCyclic}
Let $G$ be a group of size $v$ and let $J$ be a subgroup of $G$ of size $t$. Then there exists a $^\lambda \N\mathrm{H}_t(m,n;h,k)$ over $G$ relative to $J$ if the necessary conditions of Section \ref{necessarySection} are satisfied and the following inequality holds
$$
\lambda \left(\frac{m}{mh - h + 1} + \frac{n}{nk - k + 1}\right) < 1\,.
$$
\end{thm}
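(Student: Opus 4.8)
The plan is to use the probabilistic method, following the revised approach of \cite{CostaDellaFiorePasotti}. First I would invoke Lemma 4.1 of \cite{CostaDellaFiorePasotti} (as in the proof of Theorem \ref{th:ExistenceLambdaV2}) to obtain a fixed $m \times n$ partially filled skeleton $B$ in which every row has exactly $h$ filled cells and every column has exactly $k$ filled cells; such a skeleton exists precisely because $nk = mh$ and the other size conditions hold. The multiset of values to be placed is $M = ((G \setminus J) \cup (-(G\setminus J)))^{\lambda/2}$ — equivalently, $M$ contains each $x \in G \setminus J$ with the multiplicities forced by condition $(\mathrm{b}_2)$ — and $|M| = \lambda(v-t)/2 = nk$, matching the number of filled cells of $B$. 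Now place the elements of $M$ into the cells of $B$ uniformly at random (a uniformly random bijection). By construction every realization satisfies $(\mathrm{a}_2)$ and $(\mathrm{b}_2)$, so it only remains to show that with positive probability no row-sum and no column-sum (in the prescribed left-to-right / top-to-bottom order) equals $0$.

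Next I would bound the expected number of "bad" lines. For a fixed row $R_i$ with its $h$ filled cells, I want to bound $\Pr[\sum \E(R_i) = 0]$. The key observation is the one used in \cite{CostaDellaFiorePasotti}: condition on the multiset of values occupying all cells \emph{outside} $R_i$; then the $h$ values in $R_i$ are a uniformly random ordering of the remaining $h$-element sub-multiset, but more usefully, one can reveal the entries of $R_i$ one cell at a time and ask for the probability that the final running sum hits $0$. Reveal the first $h-1$ cells of $R_i$ arbitrarily; the last cell must then equal a specific element $g \in G$ to make the row sum vanish. Since the last cell is filled by a value drawn uniformly from the pool of cells not yet assigned — a pool of size $mh - (h-1) = mh - h + 1$ (the total number of filled cells minus the $h-1$ already placed in this row) — and since the multiset $M$ contains any fixed element $g$ with multiplicity at most... here I need the bound that any single element appears at most $\max$-multiplicity-one-contributes-at-most... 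Actually the clean statement is: the number of cells in the remaining pool whose value equals the required $g$ is at most the total multiplicity of $g$ in $M$; summing over the (at most one) target value and using a union bound over which cell is "last", one gets $\Pr[\text{row } R_i \text{ is bad}] \le \frac{\mu}{mh-h+1}$ where $\mu$ is a uniform upper bound on element multiplicities. In the setting of \cite{CostaDellaFiorePasotti} this $\mu$ is $\lambda$ (each non-$J$ element, and its negative, together account for $\lambda$ copies, and a careful count gives the factor $\lambda$ in the displayed inequality). So $\Pr[R_i \text{ bad}] \le \frac{\lambda}{mh - h + 1}$, and symmetrically $\Pr[C_j \text{ bad}] \le \frac{\lambda}{nk - k + 1}$.

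Finally, summing over all $m$ rows and $n$ columns, the expected number of bad lines is at most
\[
\lambda\left(\frac{m}{mh-h+1} + \frac{n}{nk-k+1}\right),
\]
which is $< 1$ by hypothesis. Hence with positive probability there are no bad lines, and a good array — a $^\lambda\N\mathrm{H}_t(m,n;h,k)$ over $G$ relative to $J$ — exists. I expect the main obstacle to be making the multiplicity bound $\mu = \lambda$ precise: one must handle the interaction between $x$ and $-x$ in the multiset $M$ correctly (for involutions in $G\setminus J$ the necessary condition $(\mathrm{d})$ that $\lambda$ is even is what keeps $M$ well-defined), and verify that conditioning on the complement of a line does not distort the uniform-draw argument for the "last cell" — this is exactly the delicate point the introduction flags as the reason the naive method leaves infinitely many open cases, so the argument here must stay at the level of a \emph{single} line's expectation rather than attempting independence across lines.
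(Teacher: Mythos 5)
Your proposal is correct and follows essentially the same route as the paper: fix the skeleton from Lemma 4.1 of \cite{CostaDellaFiorePasotti}, choose a multiset $\Omega$ of size $nk$ with $\pm\Omega=(G\setminus J)^{\lambda}$, assign it to the cells uniformly at random, and bound the probability that a given line sums to zero by conditioning on its first $h-1$ (resp.\ $k-1$) entries, noting that the forced last value has multiplicity at most $\lambda$ in the remaining pool of size $mh-h+1$ (resp.\ $nk-k+1$), then conclude by the first moment method. The only point you flag as delicate --- the multiplicity bound $\mu=\lambda$ --- is exactly as you suspect: since the multiplicities of $x$ and $-x$ in $\Omega$ sum to $\lambda$, each single element occurs at most $\lambda$ times, which is all the paper uses.
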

\begin{proof}

Let $\Omega$ be a multiset of elements of $G$ whose size is $nk$ and such that $\pm \Omega = \left(G \setminus J\right)^{\lambda}$.
We denote by $B$ the set of cells of an $m\times n$ partially filled array $A$ having exactly $h$ filled cells in each row and exactly $k$ filled cells in each column that exists due to Lemma 4.1 of \cite{CostaDellaFiorePasotti}. Note that here we have that $skel(A)=B$.

We prove that we can fill the cells of $B$ with the elements of $\Omega$ in such a way that the sum over each row and each column is non-zero.
Let us pick uniformly at random the elements of $A$ among the set $\Omega$. In this way, we will ensure that $\E(A)=\Omega$.
We denote by $X_i$ the event that the $i$-th row sums to zero and by $\mathbb{P}(X_i)$ its probability. Also we name by $X$ the random variable given by the number of rows that sum to zero and by $\mathbb{E}(X)$ its expected value.

Due to the linearity of the expected value, we have that:
$$\mathbb{E}(X)=\sum_{i=1}^m \mathbb{P}\left(X_i\right).$$
By symmetry, $\mathbb{P}(X_i)=\mathbb{P}(X_1)$ for every $i=1,\ldots,m$, and hence
$$\mathbb{E}(X)=m \cdot \mathbb{P}\left(X_1\right).$$

If we have already chosen the first, following the natural ordering from left to right, $h-1$ elements, $x_1,\dots,x_{h-1}$ of the first row then there exists at most $\lambda$ elements $\bar{x}\in \Omega\setminus\{x_1,\dots,x_{h-1}\}$ that make the sum zero. This means that
$$\mathbb{P}\left(X_1\right)\leq \frac{\lambda}{mh-(h-1)} $$
and thus
$$\mathbb{E}(X)\leq \frac{\lambda m}{mh-(h-1)}.$$

Analogously, if we denote by $\mathbb{E}(Y)$ the expected value of the random variable $Y$ given by the number of columns that sum to zero then we obtain
$$\mathbb{E}(Y)\leq \frac{\lambda n}{nk-(k-1)}.$$

Therefore we can fill $B$ with the elements of $\Omega$ such that the sum over each row and column is non-zero whenever $\mathbb{E}(X) + \mathbb{E}(Y) < 1$.
\end{proof}

\begin{rem}
Note that the inequality in Theorem \ref{th:ExistenceLambdaCyclic} is satisfied when the parameters $h$ and $k$ are sufficiently large if $\lambda$ is considered fixed.
\end{rem}

In the next sections, we improve the result of Theorem \ref{th:ExistenceLambdaCyclic} by proving that there exists a $\lambda$-fold non-zero sum Heffter array over $G \setminus J$ for $v=|G|$ larger than a small constant that does not depend on $\lambda$.

\subsection{Tiling in LLL style}
The procedure of the previous paragraph leaves, for any group $G$, the problem open for an infinite number of arrays. This problem is caused by the fact that, generating uniformly at random the matrix, the probability of the event $X_i$ (or $Y_i$) that a given row (or a given column) sums to zero depends on all the entries of the array.
Here, inspired by the Lov\'asz Local Lemma (see \cite{LLL}), we impose a different distribution: we tessellate the array with suitable tiles and we choose the elements of a tile uniformly at random among a small prescribed set. With these local considerations, we reduce the number of dependencies among the events.

First of all, we define the set of admissible tiles we will use as bricks in our construction.
\begin{defi}\label{nice}
A set $T$ of cells in a $m\times n$ array is said to be a \emph{nice tile} if, for any group $G$, for any subset $S$ of $G$ of size $|T|$, and for any vectors $(r_1,\dots,r_m)$ and $(c_1,\dots,c_n)$ of, respectively, $G^m$ and $G^n$, there exists a partially filled array $A$ such that:
\begin{itemize}
\item[$(a)$] $skel(A)=T$;
\item[$(b)$] $\E(A)=S$;
\item[$(c_1)$] if the $i$-th row of $A$ is non-empty, there exist $\beta_i \in [1,n]$ such that
$$skel(R_i)=\{(i,\beta_i),(i,\beta_i+1),\dots ,(i,\beta_i+h_i-1)\}$$
where $h_i=|T\cap skel(R_i)|$ and the indexes are taken modulo $n$;
\item[$(c_2)$] if the $j$-th column of $A$ is non-empty, there exist $\gamma_j\in [1,m]$ such that
$$skel(C_j)=\{(\gamma_j,j),(\gamma_j+1,j),\dots ,(\gamma_k+k_j-1,j)\}$$
where $k_j=|T\cap skel(C_j)|$ and the indexes are taken modulo $m$;
\item[$(d_1)$] if the $i$-th row of $A$ is non-empty, the sum of its elements, following the natural ordering from left to right and starting from the cell $(i,\beta_i)$, is different from $r_i$;
\item[$(d_2)$] if the $j$-th column of $A$ is non-empty, the sum of its elements, following the natural ordering from top to bottom and starting from the cell $(\gamma_j,j)$, is different from $c_j$.
\end{itemize}
\end{defi}

\begin{thm}\label{tiling}
Let $B$ be a set of cells in a $m\times n$ array that has exactly $h$ filled cells in each row and $k$ in each column. Then, given a group $G$ of size $v$ and a subgroup $J$ of $G$ of size $t$, there exists a $^\lambda \N\mathrm{H}_t(m,n;h,k)$ over $G$ relative to $J$ whenever the necessary conditions of Section \ref{necessarySection} are satisfied and:
\begin{itemize}
\item[1)] it is possible to partition $B$ into nice tiles $T_1,\dots,T_{\ell}$;
\item[2)]$v-t\geq \max_i(|T_i|).$
\end{itemize}
\end{thm}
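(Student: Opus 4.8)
The plan is to fill $B$ one tile at a time, using the defining property of a nice tile to rule out, tile by tile, every zero row- or column-sum. By conditions (a) and (b) of Remark~\ref{necessary} we have $mh=nk=\tfrac{\lambda(v-t)}{2}$, which is the number of filled cells of $B$. Fix an orientation of each orbit of $x\mapsto -x$ on $G\setminus J$, and let $\Omega$ be the multiset containing, for every such orbit $\{x,-x\}$, exactly $\lceil\lambda/2\rceil$ copies of its chosen representative and $\lfloor\lambda/2\rfloor$ copies of the inverse (the two numbers agree at an involution of $G\setminus J$, which is consistent because condition (d) of Remark~\ref{necessary} then forces $\lambda$ even). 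Then $|\Omega|=nk$, $\{\pm x\mid x\in\Omega\}=(G\setminus J)^{\lambda}$, every multiplicity in $\Omega$ is at most $\lceil\lambda/2\rceil$, and the number of distinct elements of $\Omega$ equals $v-t$ when $\lambda\ge 2$ and $\tfrac{v-t}{2}$ when $\lambda=1$.

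\emph{Distributing $\Omega$ over the tiles.} Next I would split the multiset $\Omega$ into \emph{sets} $\Omega_1,\dots,\Omega_\ell$ with $|\Omega_j|=|T_j|$ (each $\Omega_j$ must be a subset of $G$ in order to be fed to Definition~\ref{nice}). This is a bipartite degree-sequence realization problem, and it is solvable here: the multiplicities on the element side are at most $\lceil\lambda/2\rceil$; from $\sum_j|T_j|=nk=\tfrac{\lambda(v-t)}{2}$ and $|T_j|\le v-t$ (hypothesis~2)) we get $\ell\ge\lambda/2$, hence $\ell\ge\lceil\lambda/2\rceil$; and $\max_j|T_j|\le v-t$ is at most the number of distinct elements of $\Omega$. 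With these inequalities a routine check of the Gale--Ryser criterion (splitting the range of the parameter at $\lceil\lambda/2\rceil$ and using $\sum_j|T_j|=|\Omega|$), or a direct greedy argument, produces the sets $\Omega_1,\dots,\Omega_\ell$.

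\emph{Filling the tiles.} Now fill the tiles in the order $T_1,\dots,T_\ell$ (this order may be taken arbitrarily). Before filling $T_j$, look at each row $R_i$ it meets; by $(c_1)$ of Definition~\ref{nice} it meets $R_i$ in one contiguous block. If $T_j$ is the \emph{last} tile of the list meeting $R_i$, then every other block of $R_i$ lies in a tile already filled, so the part of the sum of $R_i$ coming from outside $T_j$ is already determined; let $r_i\in G$ be the unique value such that, if the $T_j$-block of $R_i$ summed to $r_i$, then $R_i$ would sum to $0$. For every other row put $r_i:=0$, and define $(c_1,\dots,c_n)$ for the columns in the same way. Since $|\Omega_j|=|T_j|$, applying Definition~\ref{nice} to $T_j$ with the set $\Omega_j$ and the vectors $(r_1,\dots,r_m),(c_1,\dots,c_n)$ yields a partial filling $A_j$ with $skel(A_j)=T_j$, $\E(A_j)=\Omega_j$, whose nonempty row- and column-blocks sum, in the natural orders, to elements distinct from the corresponding $r_i$, $c_j$; insert it into the array. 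Once the last tile is placed, $skel(A)=\bigcup_jT_j=B$, $\E(A)=\Omega$ (so $\{\pm x\mid x\in A\}=(G\setminus J)^\lambda$), and each row and column sums to a nonzero element because it was ``finalized'' at its last tile, where the unique dangerous value was excluded. Hence $A$ is a $^\lambda\N\mathrm{H}_t(m,n;h,k)$ over $G$ relative to $J$, as required.

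\emph{The main obstacle.} The step carrying the real content is the filling order. In general there is \emph{no} ordering of the tiles simultaneously compatible with the left-to-right order inside every row and the top-to-bottom order inside every column, so one cannot fix in advance ``the rightmost tile'' of each line; the point is precisely that an arbitrary order suffices, because a nice tile only needs to avoid the single value that would make the last block of a line vanish, and that value becomes known the moment all other blocks of the line are filled. The bookkeeping needed to compute $r_i$ and $c_j$ (which block of $T_j$ is leftmost/topmost in each line, read with the natural orderings of Definition~\ref{def:NZS}) is routine, as is the Gale--Ryser verification in the distribution step.
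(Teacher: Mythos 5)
Your proposal is correct and follows essentially the same strategy as the paper: build a half-system $\Omega$ with $\pm\Omega=(G\setminus J)^{\lambda}$, split it into repetition-free sets matching the tile sizes, and fill the tiles one at a time, handing each nice tile the unique ``dangerous'' target $r_i$ (resp. $c_j$) for every line it completes and the value $0$ (or anything) elsewhere. Two points of divergence are worth recording. For the distribution step the paper sidesteps Gale--Ryser entirely: it orders $G\setminus J$ as $g_1,\dots,g_{v-t}$ (for odd $\lambda$, so that $\pm\{g_1,\dots,g_{(v-t)/2}\}=G\setminus J$) and takes the $S_i$ to be consecutive blocks along the resulting cyclic sequence; hypothesis 2) guarantees no block repeats an element and the count $\sum_i |T_i|=\lambda(v-t)/2$ makes the blocks exhaust $\Omega$ exactly. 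That is cleaner than your realization argument, whose ``routine check'' genuinely relies on the two-valued multiplicity structure of $\Omega$ and is the one place a careless execution could fail. More importantly, the existence and uniqueness of your $r_i$ is precisely the content the paper proves and you defer as bookkeeping: property $(c_1)$ only makes the $T_j$-block of $R_i$ contiguous \emph{modulo} $n$, so in the left-to-right reading the row sum may have the form $P+M+Q$ with the new entries split into two pieces $P$ and $Q$ around the already-determined sum $M$, and in a non-abelian group this is \emph{not} a function of the block sum $Q+P$ read from $(i,\beta_i)$. What saves the argument is that $P+M+Q=0$ is still equivalent to $Q+P=-M$ (conjugate by $Q$), so the zero-locus --- though not the sum itself --- is governed by a single forbidden value of the block sum; the paper's two displayed case analyses carry out exactly this computation, and your proof needs it spelled out to be complete.
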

\proof
We first assume that $\lambda$ is even. In this case we note that $\Omega=(G\setminus J)^{\lambda/2}$ is such that $\pm \Omega=(G\setminus J)^{\lambda}$.
Moreover, since the necessary conditions are satisfied, and since the tiles partition $B$, we have that
$$\frac{\lambda}{2}|G\setminus J|=\frac{\lambda}{2}(v-t)=nk=\sum_{i=1}^{\ell} |T_i|.$$
Now we enumerate the $v-t$ elements of $G\setminus J$ as $g_1,\dots,g_{v-t}$.
Here we want to define, recursively, the sets $S_1,\dots,S_{\ell}$ that we will use to fill the tiles $T_1,\dots,T_{\ell}$. With abuse of notation, we will first define $S_1,\dots, S_{\ell}$ as ordered lists of non-repeated elements and then we will consider the associated sets.

We set the list $S_1$ to be $(g_1,\dots,g_{|T_1|})$.

Given $S_1,\dots, S_{i}$, if $i<\ell$, we define $S_{i+1}$ as follows. Named $g_{j}$ the last element of the list $S_i$, we set
$$S_{i+1}:=(g_{j+1},\dots, g_{j+|T_{i+1}|})$$
where the indexes are considered modulo $v-t$.

Here we note that, since $v-t\geq \max_i(|T_i|)$, the lists $S_1,\dots,S_{\ell}$ do not have repeated elements and hence we can consider them to be sets.

Now we fill, recursively, the tiles $T_1,\dots, T_{\ell}$ with the elements of $S_1,\dots,S_{\ell}$.

For the tile $T_1$ we denote by $\mathcal{R}_1$ the set of the rows such that $R_i\in \mathcal{R}_1$ whenever $(B\setminus T_1)\cap skel(R_i)$ is empty. Set $I_1=\{i\in [1,m]: R_i\in \mathcal{R}_1\}$, we consider a vector $(r_1,\dots,r_m)\in G^m$ such that $r_i=0$ for any $i\in I_1$.
Similarly, we consider the set $\mathcal{C}_1$ of the columns such that $C_j\in \mathcal{C}_1$ whenever $(B\setminus T_1)\cap skel(C_j)$ is empty. Set $J_1=\{j\in [1,n]: C_j\in \mathcal{C}_1\}$, we consider a vector $(c_1,\dots,c_n)\in G^n$ such that $c_j=0$ for any $j\in J_1$.
Since $T_1$ is a nice tile, we can fill it with the elements of $S_1$, obtaining an array $A_1$ such that
\begin{itemize}
\item[$(d_1)$] if the $i$-th row of $A_1$ is non-empty, the sum of its elements, following the natural ordering from left to right, is different from $r_i$;
\item[$(d_2)$] if the $j$-th column of $A_1$ is non-empty, the sum of its elements, following the natural ordering from top to bottom, is different from $c_j$.
\end{itemize}

Now we assume we have defined the arrays $A_1,\dots,A_{b}$. Then, if $b<\ell$, we define the array $A_{b+1}$ as follows. First we denote by $\bar{A}_b$ the union $\bigcup_{i=1}^b A_i$, by $\bar{R}_1,\dots, \bar{R}_m$ its rows and by $\bar{C}_1,\dots, \bar{C}_n$ its columns.
Then we consider the set $\mathcal{R}_{b+1}$ of the rows such that $(B\setminus skel(\bar{A}_b))\cap skel(R_i)$ is non empty but $(B\setminus (skel(\bar{A}_b)\cup T_{b+1}))\cap skel(R_i)$ is empty. Set $I_{b+1}=\{i\in [1,m]: R_i\in \mathcal{R}_{b+1}\}$, and given $i\in I_{b+1}$, we have that, because of property $(c_1)$ of Definition \ref{nice}, the cells of $B\cap skel(R_i)$ are either of the form
$$\{(i,\alpha_{h_{i_1}'+1}),\dots,(i,\alpha_{h_i'}),(i,\beta_i),(i,\beta_i+1),\dots,(i,\beta_i+(h_i-1)),(i,\alpha_1),\dots,(i,\alpha_{h_{i_1}'})\}$$
or
$$\{(i,\beta_i+h_{i_1}),\dots,(i,\beta_i+(h_i-1)),(i,\alpha_1),(i,\alpha_2),\dots,(i,\alpha_{h_i'}),(i,\beta_i),\dots,(i,\beta_i+(h_{i_1}-1))\}$$
where the $\alpha$'s appears in the cells of $skel(\bar{A}_b)$ and the $\beta$'s in those of $T_{b+1}$.
In the following we will denote by $\bar{a}_{i,j}$ the element of $\bar{A}_b$ in position $(i,j)$.
Here, given $i\in I_{b+1}$, we consider a vector $(r_1,\dots,r_m)\in G^m$ such that $r_i=-\sum_{j=1}^{h_i'} \bar{a}_{(i,\alpha_j)}$ where the sum is taken, starting from the cell $(i,\alpha_1)$, following the natural ordering from left to right.

Similarly, we consider the set $\mathcal{C}_{b+1}$ of the columns such that $(B\setminus skel(\bar{A}_b))\cap skel(C_j)$ is non empty but $(B\setminus (skel(\bar{A}_b)\cup T_{b+1}))\cap skel(C_j)$ is empty. Set $J_{b+1}=\{j\in [1,n]: C_j\in \mathcal{C}_{b+1}\}$, and given $j\in J_{b+1}$, we have that, because of property $(c_2)$ of Definition \ref{nice}, the filled cells of $skel(C_j)$ are either of the form
$$\{(\alpha_{k_{j_1}'+1},j),\dots,(\alpha_{k_j'},j),(\gamma_j,j),(\gamma_j+1,j),\dots,(\gamma_j+(k_j-1),j),(\alpha_1,j),\dots,(\alpha_{k_{j_1}'},j)\}$$
or
$$\{(\gamma_j+k_{j_1},j),\dots,(\gamma_j+(k_j-1),j),(\alpha_1,j),(\alpha_2,j),\dots,(\alpha_{k_j'},j),(\gamma_j,j),\dots,(\gamma_j+(k_{j_1}-1),j)\}$$
where the $\alpha$'s appears in the cells of $skel(\bar{A}_b)$ and the $\gamma$'s in those of $T_{b+1}$.
Here, given $j\in J_{b+1}$, we consider a vector $(c_1,\dots,c_n)\in G^n$ such that $c_j=-\sum_{i=1}^{k_j'} \bar{a}_{(\alpha_i,j)}$ where the sum is taken, starting from the cell $(\alpha_1,j)$, following the natural ordering from top to bottom.
Since $T_{b+1}$ is a nice tile, we can fill it with the elements of $S_{b+1}$, obtaining an array $A_{b+1}$ such that
\begin{itemize}
\item[$(d_1)$] if the $i$-th row of $A_{b+1}$ is non-empty, the sum of its elements, following the natural ordering from left to right, is different from $r_i$;
\item[$(d_2)$] if the $j$-th column of $A_{b+1}$ is non-empty, the sum of its elements, following the natural ordering from top to bottom, is different from $c_j$.
\end{itemize}
Here we note that, given a row $R_i\in \mathcal{R}_{b+1}$ with $i\in I_{b+1}$, because of property $(c_1)$ of Definition \ref{nice},  $\E(R_i\cap \bar{A}_{b+1})$ is, following the natural ordering, either of the form
$$(\bar{a}_{h_{i_1}'+1},\dots,\bar{a}_{h_i'},a_1,a_2,\dots,a_{h_i},\bar{a}_1,\dots,\bar{a}_{h_{i_1}'})$$
or
$$(a_{h_{i_1}+1},\dots,a_{h_i},\bar{a}_1,\bar{a}_2,\dots,\bar{a}_{h_i'},a_1,\dots,a_{h_{i_1}})$$
where the $\bar{a}$'s belong to $\bar{A_b}$ and the $a$'s are elements of $A_{b+1}$.

In both cases, we have that
$$a_1+\dots+a_{h_i}\not=-(\bar{a}_{1}+\dots+\bar{a}_{h_i'})=-\bar{a}_{h_i'}-\dots-\bar{a}_{1}.$$
This implies, in the first case that
$$\bar{a}_{h_{i_1}'+1}+\dots + \bar{a}_{h_i'}+a_1+a_2+\dots+a_{h_i}+\bar{a}_1+\dots+\bar{a}_{h_{i_1}'}\not=0$$
and, in the latter, that
$$a_{h_{i_1}+1}+\dots+a_{h_i}+\bar{a}_1+\bar{a}_2+\dots+\bar{a}_{h_i'}+a_1+\dots+a_{h_{i_1}}\not=0.$$
{This means that the sum of the elements of $R_i$ is, in both cases, non-zero for any $i\in I_{b+1}$.}

Reasoning in the same way also for the columns, we obtain that also the sum of the elements of $C_j$ is non-zero whenever $j\in J_{b+1}$.

Therefore, we have that $\bigcup_{i=1}^{\ell} A_i$ defines a $^\lambda \N\mathrm{H}_t(m,n;h,k)$ over $G$ relative to $J$.

Finally, we assume that $\lambda$ is odd. Here, since the necessary conditions are satisfied, we do not have involutions in $G\setminus J$ and we note that if $x\in G\setminus J$ then also $-x$ is in $G\setminus J$. It follows that $G\setminus J$ is even and we can enumerate the $v-t$ elements of $G\setminus J$ as $g_1,\dots,g_{v-t}$ with the additional property that if $x\in \{g_1,\dots,g_{\frac{v-t}{2}}\}$ then $-x\in \{g_{\frac{v-t}{2}+1},\dots,g_{v-t}\}$.

Here we note that
$$\Omega=(G\setminus J)^{(\lambda-1)/2}\cup \{g_1,\dots,g_{\frac{v-t}{2}}\}$$
is such that $\pm \Omega=(G\setminus J)^{\lambda}$. Moreover, proceeding as in the case $\lambda$ even we have that
$$\left(\frac{\lambda-1}{2}|G\setminus J|\right)+\frac{|G\setminus J|}{2}=\frac{\lambda}{2}(v-t)=nk=\sum_{i=1}^{\ell} |T_i|.$$
It follows that we can partition $\Omega$ with lists $S_1,\dots,S_{\ell}$ that do not have repeated elements and such that $|S_i|=|T_i|$.
Then, proceeding as in the case $\lambda$ even, we obtain the thesis also in the odd case.
\endproof
\begin{rem}
The procedure of the previous theorem reduces the dependencies among the events $X_i$'s (resp.  $Y_i$'s) and hence the edges in the dependencies graph (see \cite{LLL}).
Indeed the event $X_i$, that the $i$-th row sums to zero, only depends on the tiles that intersect $R_i$.

Here our first aim was to apply the Lov\'asz Local Lemma (see \cite{LLL}) to solve some instances of the existence problem. 
However, since we only use nice tiles, even these dependencies do not affect the procedure of Theorem \ref{tiling}, so we can fill the array in a very general context, by simply acting locally, tile by tile.
\end{rem}
\section{Totally Filled Arrays}
The goal of this section is to prove that, if we consider a totally filled $m\times n$ rectangular array $B$, we can define a $^\lambda \N\mathrm{H}_t(m,n)$ over $G$ relative to $J$ whose cells are those of $B$ whenever the necessary conditions of Section \ref{necessarySection} are satisfied and $|G|=v\geq 30$.
First of all, we consider the case $m=1$.
\begin{prop}\label{m1}
Let $G$ be a group of size $v$ and let $J$ be a subgroup of $G$ of size $t$. Then there exists a $^\lambda \N\mathrm{H}_t(1,n)$ over $G$ relative to $J$ whenever the necessary conditions of Section \ref{necessarySection} are satisfied.
\end{prop}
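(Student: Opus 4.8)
The plan is to follow the reduction already used in the proof of Theorem~\ref{tiling}. When $m=1$ the necessary conditions force $k=1$, hence $h=n$ and the single row is entirely filled; moreover, since $\{\pm a_i\}$ has to equal $(G\setminus J)^{\lambda}$, every entry automatically lies in $G\setminus J$, so each column (a single cell) is non-zero and condition $(\mathrm{c_2})$ reduces to the single requirement that the ordered sum $a_1+\cdots+a_n$ be non-zero ($G$ being possibly non-abelian). As in Theorem~\ref{tiling} I would fix a multiset $\Omega$ of cardinality $n$ with $\pm\Omega=(G\setminus J)^{\lambda}$, namely $\Omega=(G\setminus J)^{\lambda/2}$ when $\lambda$ is even and $\Omega=(G\setminus J)^{(\lambda-1)/2}\cup\{g_1,\dots,g_{(v-t)/2}\}$ when $\lambda$ is odd, where $g_1,\dots,g_{(v-t)/2}$ are representatives of the pairs $\{x,-x\}$ in $G\setminus J$ (which is involution-free when $\lambda$ is odd, by Remark~\ref{necessary}(d)). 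It then remains to order $\Omega$, possibly after a small re-balancing inside its $\pm$-class, so that its sum is non-zero. I would also record the elementary fact that $G\setminus J$ generates $G$: if $H=\langle G\setminus J\rangle$ were a proper subgroup then $G\setminus H\subseteq J$, and since $0$ lies in both $H$ and $J$ this inclusion is strict, forcing $v/2\le v-|H|=|G\setminus H|\le|J|-1\le v/2-1$, which is absurd. Hence $\langle\Omega\rangle=G$.

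The argument then splits according to the structure of $G$. If $G$ is isomorphic to an elementary abelian $2$-group $\mathbb{Z}_2^{r}$, the statement is precisely the ``if'' direction of Theorem~\ref{z2h} (with Theorem~\ref{th:ExistenceLambdaV2} taking care of $v=2$), so I may assume henceforth that $G$ is not of exponent $2$. If $G$ is non-abelian, then since $\langle\Omega\rangle=G$ the support of $\Omega$ cannot consist of pairwise commuting elements, so it contains two distinct non-commuting elements $x,y$; put $x,y$ in the first two positions and order the rest of $\Omega$ arbitrarily, with tail-sum $w$. If $x+y+w\neq0$ we are done; otherwise $w=-(x+y)$, and the ordering beginning with $y,x$ has sum $y+x-(x+y)$, which is non-zero precisely because $x$ and $y$ do not commute. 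So in the non-abelian case a single commutator always breaks a would-be zero sum.

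Finally, suppose $G$ is abelian (and, by the above, not of exponent $2$). Now the sum of $\Omega$ no longer depends on the ordering, so I must instead choose $\Omega$ itself with non-zero sum, using the freedom to re-balance it inside its $\pm$-class. If $\lambda$ is odd, the part $(G\setminus J)^{(\lambda-1)/2}$ sums to $0$ (because $G\setminus J$ is symmetric), so the sum of $\Omega$ equals $\sum_i g_i$; since $v-t=2n/\lambda$ is a positive even integer there is at least one representative $g_1$, and if $\sum_i g_i=0$ then replacing $g_1$ by $-g_1$ shifts the sum by $-2g_1\neq0$, so one of the two choices works. If $\lambda$ is even, the same counting argument as above, applied with the proper subgroup $G_{[2]}=\{x:2x=0\}$ in place of $H$, shows that $G\setminus J$ contains a non-involution $y_0$; the sum of $\Omega=(G\setminus J)^{\lambda/2}$ is $\tfrac{\lambda}{2}\sum_{x\in G\setminus J}x$, and if this equals $0$ I replace one copy of $-y_0$ in $\Omega$ by one copy of $y_0$, getting a still-valid multiset whose sum is now $2y_0\neq0$.

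I do not expect any individual computation to be the real difficulty; the care is all in the case structure. The crucial point is to recognise that the elementary abelian $2$-groups are genuinely exceptional, so that they must be quarantined and handled by quoting Theorems~\ref{th:ExistenceLambdaV2} and~\ref{z2h}, and that, once they are set aside, the non-abelian and abelian regimes behave oppositely: for non-abelian $G$ the conclusion is essentially free (a commutator), whereas for abelian $G$ one genuinely needs the $\pm$-rebalancing together with the existence of a non-involution outside $J$. Both counting lemmas used above rely on the same two inputs: $J\subsetneq G$ with $t\mid v$, and the fact that a proper subgroup of $G$ has order at most $v/2$.
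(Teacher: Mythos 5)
Your proposal is correct and follows essentially the same route as the paper's proof: quarantine the groups $\mathbb{Z}_2^r$ by citing Theorems \ref{th:ExistenceLambdaV2} and \ref{z2h}, break a would-be zero row sum in the non-abelian case by transposing a non-commuting pair taken from $G\setminus J$, and in the abelian case by flipping the sign of one occurrence of a non-involution of $G\setminus J$. The only differences are in how these special elements are located --- you use the fact that $G\setminus J$ generates $G$ together with a counting argument against the subgroup $G_{[2]}=\{x:2x=0\}$, whereas the paper uses centralizers and the decomposition $G\cong H\times\mathbb{Z}_2^{r_1}$ with $H$ of odd order --- and your $G_{[2]}$ argument has the minor advantage of visibly covering abelian groups containing elements of order $4$, which the paper's stated trichotomy technically omits even though its sign-flip argument applies to them verbatim.
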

\proof
We may assume $G$ is not isomorphic to $\mathbb{Z}_2^r$ since this case has been already considered in Theorem \ref{z2h}. This means that $G$ is either non-abelian or isomorphic to $H\times \mathbb{Z}_2^{r_1}$ where $H$ is a non-trivial abelian group of odd order and $r_1\geq 0$.

CASE 1: $G$ is non-abelian. We claim that, in this case, we have two elements $x,y$ in $G\setminus J$ such that $x+y\not=y+x$. We first note that, since $G$ is non-abelian the center $Z(G)$ of $G$ has a size of at most $|G|/2$. Since $G\setminus J$ has a size of at least $|G|/2$ and it does not contain the identity, there is $y\in G\setminus J$ that is not in the center of $G$.
Here we have that the centralizer $Z(y)$ of $y$ is a proper subgroup of $J$ and has, again, size at most $|G|/2$. Therefore, there exists $x\in G\setminus J$ that is not in the centralizer of $y$ and, in particular $x+y \neq y+x$ and $x\not=\pm y$.

Now we order the elements of $G\setminus J$ as $g_1,\dots,g_{v-t}$ so that $g_1 = x$ and $g_2 = y$. Moreover, if $\lambda$ is odd, we have that $G\setminus J$ does not contain any involution and thus we can assume that $\pm\{g_1,\dots,g_{\frac{v-t}{2}}\}=G\setminus J$ as done in the proof of Theorem \ref{tiling}.
In the following, we denote by $B$ the set of cells of an $1\times n$ totally filled array.
We fill the elements of $B$ so that, in position $(1,i)$ we put the element $g_i$ where the index $i$ is considered modulo $v-t$. Denoted by $A_1$ the array so defined, we have that its element in position $(1,i)$ is $a_{1,i}=g_{i\pmod{v-t}}$.

Since the necessary conditions are satisfied, we have filled $B$ with the elements of:
$$\Omega:=\begin{cases}(G\setminus J)^{\lambda/2}\mbox{ if }\lambda\equiv 0\pmod{2};\\(G\setminus J)^{(\lambda-1)/2}\cup \{g_1,\dots,g_{\frac{v-t}{2}}\}\mbox{ otherwise}.
\end{cases}$$
In both cases, we have that the elements of $\pm\E(A_1)=\pm \Omega$ cover $G\setminus J$ exactly $\lambda$ times.

Clearly, the sum over every column of $A_1$ is non-zero so it is left to prove that we can impose the sum over the first row to be non-zero.
Let us assume that
\begin{equation}\label{xy}
\sum_{a\in \E(R_1)} a= x+y+\sum_{i=3}^n a_{1,i}=0.
\end{equation}
Indeed, if this equation does not hold, $A_1$ is already a $^\lambda \N\mathrm{H}_t(1,n)$ over $G$ relative to $J$.
If equation \eqref{xy} holds, since $x+y\not=y+x$ we must have that
$$y+x+\sum_{i=3}^n a_{1,i}\not=0.$$
Therefore, in this case, it suffices to switch $x$ and $y$ to obtain an array $A$ that is a $^\lambda \N\mathrm{H}_t(1,n)$ over $G$ relative to $J$.

CASE 2: $G=H\times \mathbb{Z}_2^{r_1}$ where $H$ is a non-trivial abelian group of odd order.
In this case we have that $G\setminus (\{0_H\}\times \mathbb{Z}_2^{r_1})$ does not contain involutions and has size at least $\frac{2|G|}{3}>|G|/2$. Since $G\setminus J$ has size at least $|G|/2$, there exists a non involution element $x$ in $G\setminus J$.

Now we order the elements of $G\setminus J$ as we did in CASE 1 and we assume that $g_1 = x$. Also here we denote by $B$ the set of cells of an $1\times n$ totally filled array and we fill the elements of $B$ so that, in position $(1,i)$ we put the element $g_i$ where the index $i$ is considered modulo $v-t$. Denoted by $A_1$ the array so defined, we have that its element in position $(1,i)$ is $a_{1,i}=g_{i\pmod{v-t}}$.
Also in this case, we have that $\pm\E(A_1)$ covers $G\setminus J$ exactly $\lambda$ times.

Clearly, the sum over every column of $A_1$ is non-zero so it is left to prove that we can impose the sum over the first row to be non-zero.
Let us assume that
\begin{equation}\label{x}
\sum_{a\in \E(R_1)} a= x+\sum_{i=2}^n a_{1,i}=0.
\end{equation}
Indeed, if this equation does not hold, $A_1$ is already a $^\lambda \N\mathrm{H}_t(1,n)$ over $G$ relative to $J$.
If equation \eqref{x} holds, since $-x\not=x$ we must have that
$$-x+\sum_{i=2}^n a_{1,i}\not=0.$$
Therefore, in this case, it suffices to change $x$ with $-x$ to obtain an array $A$ that is a $^\lambda \N\mathrm{H}_t(1,n)$ over $G$ relative to $J$.
\endproof
Now we list two families of nice tiles that we will use as bricks to fill the rectangular arrays.
\begin{prop}\label{3b}
Let $T$ be a set of cells of an $m\times n$ array that satisfies the following conditions:
\begin{itemize}
\item[1)] $T$ has exactly $3$ non-empty rows $R_{\alpha},R_{\alpha+1},R_{\alpha+2}$ where the sum are considered modulo $m$;
\item[2)] Each row $R_{\alpha},R_{\alpha+1},R_{\alpha+2}$ is non-empty exactly in the cells of the $b\geq 3$ columns $C_{\beta},C_{\beta+1},\dots,C_{\beta+(b-1)}$ where the sum are considered modulo $n$.
\end{itemize}
Then $T$ is nice.
\end{prop}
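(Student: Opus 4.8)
The plan is to verify directly all the conditions $(a)$, $(b)$, $(c_1)$, $(c_2)$, $(d_1)$, $(d_2)$ of Definition \ref{nice} for the tile $T$ described in the statement. Conditions $(a)$ and $(b)$ are immediate once we exhibit a filling whose skeleton is $T$ and whose entries are the prescribed set $S$ (with $|S|=|T|=3b$). Conditions $(c_1)$ and $(c_2)$ are also automatic from the hypotheses: each non-empty row of $T$ consists of exactly the $b$ consecutive (modulo $n$) columns $C_\beta,\dots,C_{\beta+(b-1)}$, so we may take $\beta_i=\beta$ and $h_i=b$ for $i\in\{\alpha,\alpha+1,\alpha+2\}$; and each non-empty column $C_j$ (for $j$ among those $b$ columns) meets exactly the three consecutive rows $R_\alpha,R_{\alpha+1},R_{\alpha+2}$, so we take $\gamma_j=\alpha$ and $k_j=3$. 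Thus the entire content of the proposition is conditions $(d_1)$ and $(d_2)$: we must arrange the $3b$ elements of $S$ into the $3\times b$ block so that every one of the $3$ row-sums avoids its forbidden value $r_i$ and every one of the $b$ column-sums avoids its forbidden value $c_j$ — and this must work for an arbitrary group $G$, an arbitrary $3b$-subset $S$, and arbitrary forbidden vectors.

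First I would reduce to thinking of $T$ as an abstract $3\times b$ grid and describe a two-phase filling. Phase one: place the elements of $S$ in the $b$ columns so that each column-sum is correct, handling columns left to right; for a given column we have $3$ cells, and since we are free to permute the three chosen entries among the three cells, the column-sum (read top to bottom) is one of several values obtained by reordering — but more usefully, once the bottom two entries of a column are fixed there are still choices for which elements go there. The cleaner route is to fix, for each of the $b$ columns, which triple of elements of $S$ occupies it (an arbitrary partition of $S$ into $b$ triples will do), then within each column the top entry is $s$, the middle is $t$, the bottom is $u$, and the partial column-sum we must control is $s+t+u$; since we may swap which of the three plays which role, and a column of three distinct elements admits at least two distinct orderings of its full sum unless the three elements pairwise satisfy strong coincidences, we can always pick an ordering with $s+t+u\neq c_j$. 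This disposes of $(d_2)$.

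Phase two: now the three row-sums $R_\alpha,R_{\alpha+1},R_{\alpha+2}$ are determined by the current arrangement, and I must fix any row whose sum equals its forbidden value $r_i$ without destroying the column conditions already achieved. The key move is a local swap between two columns: exchange the entries in, say, row $\alpha$ of column $j$ with row $\alpha$ of column $j'$. Such a swap changes the sums of rows $\alpha$ (and only potentially row $\alpha$, since the entry in row $\alpha$ is the first summand of that row — here one must be careful that swapping the \emph{leading} entry of a row changes the row partial sum by a left-translate, which in a non-abelian group is a genuine change), while the column partial sums of $C_j$ and $C_{j'}$ change; since $b\geq 3$ we have enough columns to choose a swap, and among the several candidate swaps at least one avoids re-creating any bad column-sum, because each ``bad'' outcome is a single forbidden group element out of a set of available alternatives that grows with $b$. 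Iterating over the (at most three) offending rows, and using that $b\geq 3$ gives slack at every stage, produces a fully valid filling.

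The main obstacle I anticipate is the non-commutativity: because conditions $(d_1)$ and $(d_2)$ refer to \emph{ordered} partial sums starting from a prescribed cell, one cannot freely argue ``the sum of these $3b$ elements is fixed, so adjusting is easy'' — reorderings genuinely change sums, and a swap that fixes a row may unpredictably alter a column and vice versa. The argument must therefore be a careful counting/pigeonhole over the available orderings and swaps, showing that at each step the number of forbidden outcomes (one per affected line) is strictly smaller than the number of moves available (which $b\geq 3$ guarantees). I would organize this as: (i) choose column-triples arbitrarily; (ii) order each column to fix $(d_2)$; (iii) if some row violates $(d_1)$, repair it by a column-to-column swap of that row's entries chosen to preserve $(d_2)$, checking that at most three such repairs are needed and that they do not interfere. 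I expect step (iii) to require the bulk of the bookkeeping, but no deep idea beyond the ``one forbidden value versus many alternatives'' principle that already underlies Theorem \ref{th:ExistenceLambdaAbelianNoInv}.
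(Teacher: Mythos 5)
There is a genuine gap, and it is fatal to both phases of your construction: the local moves you propose do not change the sums you need them to change when $G$ is abelian. In phase one you fix an arbitrary partition of $S$ into $b$ triples and then try to reorder the three entries of a column so that the column sum avoids $c_j$; but in an abelian group all six orderings of a triple $\{s,t,u\}$ give the same sum $s+t+u$, so if your arbitrary partition places a triple summing to $c_j$ in column $j$ (which can certainly happen), no reordering rescues you --- you would have to change \emph{which} elements sit in that column, a different and more delicate argument than ``an arbitrary partition will do.'' Likewise, the phase-two repair move --- exchanging the row-$\alpha$ entries of two columns --- leaves the multiset of entries of row $\alpha$ unchanged and therefore leaves its sum unchanged in any abelian group; it perturbs the two column sums you had just arranged while doing nothing to the offending row. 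You have the role of non-commutativity exactly backwards: commutativity is not a convenience to be exploited here, it is the reason your moves are inert. (Your reduction of the problem to conditions $(d_1)$ and $(d_2)$, and the verification of $(c_1)$, $(c_2)$, is correct.)

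The paper avoids all of this with a short first-moment computation: choose the bijection from the $3b$ cells of $T$ to $S$ uniformly at random; conditioning on the first $b-1$ entries of a row, at most one of the remaining $2b+1$ elements of $S$ makes that row sum to $r_i$, so $\mathbb{P}(X_i)\le \frac{1}{2b+1}$, and similarly each column is bad with probability at most $\frac{1}{3b-2}$. Hence the expected number of bad lines is at most $\frac{3}{2b+1}+\frac{b}{3b-2}<1$ for $b\ge 3$, and some filling has no bad line. If you insist on a constructive proof you would need moves that genuinely alter the relevant sums in the abelian case --- for instance swapping two entries lying in the same column but in different rows, which preserves the column multiset (hence the column sum when $G$ is abelian) while changing two row multisets --- but then the non-abelian case requires separate care about ordered partial sums, and the bookkeeping quickly exceeds what the expectation argument costs.
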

\proof
Let us consider a set $S$ of elements of a finite group $G$ and vectors $(r_1,\dots, r_m)\in G^m$, $(c_1,\dots, c_n)\in G^n$.
Now we choose, uniformly at random, an array $A$ such that $skel(A)=T$ and $\E(A)=S$.  Following the notation of Theorem \ref{th:ExistenceLambdaCyclic},  we denote by $\mathbb{P}(X_i)$ the probability of the event $X_i$ that the $i$-th row sums to $r_i$.
Here, if $i\in\{\alpha,\alpha+1,\alpha+2\}$, we have that, chosen the first $b-1$ elements (following the natural ordering) $a_{i,\beta},\dots, a_{i,\beta+(b-2)}$ of the $i$-th row, there is at most one element $x\in S$ such that
$$a_{i,\beta}+\dots+ a_{i,\beta+(b-2)}+x=r_i.$$
It follows that, if $i\in\{\alpha,\alpha+1,\alpha+2\}$,
$$\mathbb{P}(X_i)\leq \frac{1}{|S|-(b-1)}=\frac{1}{2b+1}.$$
Now we denote by $\mathbb{E}(X)$ the expected value of the random variable $X$ given by the number of rows $R_i$, with $i\in\{\alpha,\alpha+1,\alpha+2\}$, that sums to $r_i$.
Due to the linearity of the expected value, we have that:
$$\mathbb{E}(X)=\mathbb{P}(X_{\alpha})+\mathbb{P}(X_{\alpha+1})+\mathbb{P}(X_{\alpha+2})\leq \frac{3}{2b+1}<\frac{1}{2}.$$
Similarly, if we denote by $\mathbb{E}(Y)$ the expected value of the random variable $Y$ given by the number of columns $C_j$, with $j\in\{\beta,\beta+1,\dots,\beta+(b-1)\}$, that sums to $c_j$, we have that:
$$\mathbb{E}(Y)\leq \frac{b}{3b-2}<\frac{1}{2}.$$
Since
$$\mathbb{E}(X)+\mathbb{E}(Y)<1$$
there exists an array $A$ whose skeleton is $T$ and such that $\E(A)=S$ and:
\begin{itemize}
\item[$(d_1)$] if the $i$-th row of $A$ is non-empty, the sum of its elements is different from $r_i$;
\item[$(d_2)$] if the $j$-th column of $A$ is non-empty, the sum of its elements is different from $c_j$.
\end{itemize}
It follows that $T$ is a nice tile.
\endproof
\begin{ex}
Here we show an example of a nice tile $T$, $|T| = 12$, of an $m \times n$ array with $m =5$ and $n \geq 6$, that has $3$ non-empty rows and $4$ non-empty columns. The cells that belong to $T$ are represented by a $\bullet$.
\begin{center}
$\begin{array}{|r|r|r|r|r|r|r|r|r|}
\hline \bullet & \bullet & \:\: & \cdots & \:\: & \bullet & \bullet \\
\hline \bullet & \bullet & & \cdots & & \bullet & \bullet \\
\hline & & & \cdots & & & \\
\hline & & & \cdots & & & \\
\hline \bullet & \bullet & & \cdots & & \bullet & \bullet \\
\hline
\end{array}$
\end{center}
\end{ex}
\begin{prop}\label{2b}
Let $T$ be a set of cells of an $m\times n$ array that satisfies the following conditions:
\begin{itemize}
\item[1)] $T$ has exactly $2$ non-empty rows $R_{\alpha},R_{\alpha+1}$ where the sum is considered modulo $m$;
\item[2)] Each row $R_{\alpha},R_{\alpha+1}$ is non-empty exactly in the cells of the $b\geq 4$ columns $C_{\beta},C_{\beta+1},\dots,C_{\beta+(b-1)}$ where the sum is considered modulo $n$.
\end{itemize}
Then $T$ is nice.
\end{prop}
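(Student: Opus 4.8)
The plan is to mirror, almost word for word, the first-moment argument of Proposition~\ref{3b}. Fix a finite group $G$, a subset $S\subseteq G$ with $|S|=|T|=2b$, and vectors $(r_1,\dots,r_m)\in G^m$ and $(c_1,\dots,c_n)\in G^n$. I would pick an array $A$ uniformly at random among all arrays with $skel(A)=T$ and $\E(A)=S$. By hypotheses 1) and 2), the two non-empty rows $R_\alpha,R_{\alpha+1}$ occupy exactly the $b$ consecutive columns $C_\beta,\dots,C_{\beta+(b-1)}$ and the $b$ non-empty columns occupy exactly the two consecutive rows $R_\alpha,R_{\alpha+1}$; hence $A$ automatically satisfies conditions $(c_1)$ and $(c_2)$ of Definition~\ref{nice} no matter how the entries of $S$ are placed. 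It then remains to produce a filling for which no non-empty row sums to its target $r_i$ and no non-empty column sums to its target $c_j$.

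For the rows, let $X_i$ be the event that $R_i$ sums to $r_i$ for $i\in\{\alpha,\alpha+1\}$. Conditioning on the first $b-1$ entries of $R_i$ in the natural left-to-right ordering, at most one of the remaining $|S|-(b-1)=b+1$ available elements completes the row sum to $r_i$, so $\mathbb{P}(X_i)\le\frac{1}{b+1}$; writing $X$ for the number of non-empty rows summing to their target, linearity of expectation gives $\mathbb{E}(X)\le\frac{2}{b+1}$. For the columns, each non-empty column has exactly two cells, so conditioning on its first entry leaves at most one of the $|S|-1=2b-1$ remaining elements completing the column sum to $c_j$, whence $\mathbb{P}(Y_j)\le\frac{1}{2b-1}$ and, with $Y$ the number of non-empty columns summing to their target, $\mathbb{E}(Y)\le\frac{b}{2b-1}$.

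Finally I would check that $\mathbb{E}(X)+\mathbb{E}(Y)\le\frac{2}{b+1}+\frac{b}{2b-1}<1$ for every $b\ge 4$: both summands are decreasing in $b$, so it suffices to check $b=4$, where the left-hand side equals $\frac{2}{5}+\frac{4}{7}=\frac{34}{35}<1$. Since $\mathbb{E}(X+Y)<1$, the variable $X+Y$ equals $0$ with positive probability, so some array $A$ with $skel(A)=T$ and $\E(A)=S$ satisfies $(d_1)$ and $(d_2)$ of Definition~\ref{nice}, i.e. $T$ is nice. There is essentially no obstacle here beyond the numerical check; the only point worth flagging is that this is exactly where the hypothesis $b\ge 4$ — rather than $b\ge 3$, as for the three-row tiles of Proposition~\ref{3b} — is needed, since at $b=3$ the bound would read $\frac{1}{2}+\frac{3}{5}>1$ and the argument would break down.
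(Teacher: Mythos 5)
Your proof is correct and follows essentially the same first-moment argument as the paper: the same uniform random filling, the same bounds $\mathbb{E}(X)\le\frac{2}{b+1}$ and $\mathbb{E}(Y)\le\frac{b}{2b-1}$, and the same conclusion from $\mathbb{E}(X)+\mathbb{E}(Y)<1$ for $b\ge 4$. Your explicit check at $b=4$ and the remark on why $b=3$ fails are accurate additions but do not change the method.
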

\proof
Let us consider a set $S$ of elements of a finite group $G$ and vectors $(r_1,\dots, r_m)\in G^m$, $(c_1,\dots, c_n)\in G^n$.
We proceed as in the proof of Proposition \ref{3b} by choosing, uniformly at random an array $A$ such that $skel(A)=T$ and $\E(A)=S$.

Here, denoted by $\mathbb{E}(X)$ the expected value of the random variable $X$ given by the number of rows $R_i$, with $i\in\{\alpha,\alpha+1\}$, that sums to $r_i$,
we have that:
$$\mathbb{E}(X)\leq \frac{2}{2b-(b-1)}=\frac{2}{b+1}.$$
Similarly, if we denote by $\mathbb{E}(Y)$ the expected value of the random variable $Y$ given by the number of columns $C_j$, with $j\in\{\beta,\beta+1,\dots,\beta+(b-1)\}$, that sums to $c_j$, we have that:
$$\mathbb{E}(Y)\leq \frac{b}{2b-1}.$$
Since $b\geq 4$, we have that
$$\mathbb{E}(X)+\mathbb{E}(Y)\leq \frac{2}{b+1}+\frac{b}{2b-1}<1.$$
Therefore there exists an array $A$ whose skeleton is $T$ and such that $\E(A)=S$ and:
\begin{itemize}
\item[$(d_1)$] if the $i$-th row of $A$ is non-empty, the sum of its elements is different from $r_i$;
\item[$(d_2)$] if the $j$-th column of $A$ is non-empty, the sum of its elements is different from $c_j$.
\end{itemize}
It follows that $T$ is a nice tile.
\endproof
\begin{ex}
Here we show an example of a nice tile $T$, $|T| = 10$, of an $m \times n$ array with $m =4$ and $n \geq 7$, that has $2$ non-empty rows and $5$ non-empty columns.
\begin{center}
$\begin{array}{|r|r|r|r|r|r|r|r|r|r|r|}
\hline \bullet & \bullet & \:\: & \cdots & \:\: & \bullet & \bullet & \bullet\\
\hline & & & \cdots & & & & \\
\hline \bullet & \bullet & & \cdots & & \bullet & \bullet & \bullet \\
\hline
\end{array}$
\end{center}
\end{ex}
Using the nice tiles of Propositions \ref{3b} and \ref{2b}, we can prove that
\begin{thm}\label{rectangular}
Let $G$ be a group of size $v$ and let $J$ be a subgroup of $G$ of size $t$. Then there exists a $^\lambda \N\mathrm{H}_t(m,n)$ over $G$ relative to $J$ whenever the necessary conditions of Section \ref{necessarySection} are satisfied and $v-t\geq 15$.
\end{thm}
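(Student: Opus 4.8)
The plan is to deduce the statement from Theorem~\ref{tiling}: since the necessary conditions are assumed, it is enough to partition the totally filled $m\times n$ array $B$ (which indeed has $h=n$ filled cells in each row and $k=m$ filled cells in each column) into nice tiles $T_1,\dots,T_\ell$ with $\max_i|T_i|\le v-t$. As building blocks I will use only the rectangular blocks furnished by Propositions~\ref{3b} and~\ref{2b}: a block occupying $3$ consecutive rows and $b$ consecutive columns with $b\ge 3$, and a block occupying $2$ consecutive rows and $b$ consecutive columns with $b\ge 4$ (indices read cyclically). If in the first case I only ever use widths $b\in\{3,4,5\}$ and in the second only widths $b\in\{4,5,6,7\}$, then every tile has size at most $\max(3\cdot 5,\,2\cdot 7)=15\le v-t$, so the size requirement of Theorem~\ref{tiling} is automatically met and what is left is a purely combinatorial tiling problem. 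Two preliminary remarks will be used repeatedly: transposing an array interchanges its rows and columns together with their left-to-right / top-to-bottom sums while preserving the multiset condition on $\{\pm x\mid x\in A\}$ and all the necessary conditions, so a $^\lambda\N\mathrm{H}_t(m,n)$ over $G$ relative to $J$ exists if and only if a $^\lambda\N\mathrm{H}_t(n,m)$ does; and, numerically, every integer $\ge 2$ is a nonnegative combination of $2$ and $3$, every integer $\ge 3$ is one of $3,4,5$, and every integer $\ge 4$ is one of $4,5,6,7$.

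The construction then proceeds by cases on $\min(m,n)$. If $\min(m,n)=1$, then after a possible transposition we may assume $m=1$, and the assertion is exactly Proposition~\ref{m1}. So assume $m,n\ge 2$; here $v-t=\tfrac{2mn}{\lambda}\ge 15$. If $n\ge 4$, I partition the row index set into intervals of lengths $2$ and $3$; over each length-$3$ interval I partition the $n$ columns into intervals of lengths in $\{3,4,5\}$, and over each length-$2$ interval into intervals of lengths in $\{4,5,6,7\}$. The products of a row interval with one of its column intervals tile $B$, are nice by Propositions~\ref{3b} and~\ref{2b}, and have size at most $15\le v-t$, so Theorem~\ref{tiling} applies. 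If $n=3$, then $v-t=\tfrac{6m}{\lambda}\ge 15$ forces $m\ge 3$; transposing, I work with the $3\times m$ array, take the single strip consisting of all three rows, and split its $m$ columns into intervals of lengths in $\{3,4,5\}$, obtaining nice $3\times b$ tiles with $b\le 5$. If $n=2$, then $v-t=\tfrac{4m}{\lambda}\ge 15$ forces $m\ge 4$; transposing, I work with the $2\times m$ array and split its $m$ columns into intervals of lengths in $\{4,5,6,7\}$, obtaining nice $2\times b$ tiles with $b\le 7$. The only pairs with $\min(m,n)\le 3$ left untreated, namely $(m,n)$ equal to $(2,2),(2,3)$ or their transposes, violate $v-t\ge 15$ and hence need not be considered. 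In each surviving case Theorem~\ref{tiling} produces the required $^\lambda\N\mathrm{H}_t(m,n)$.

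I expect the only delicate point to be this case analysis rather than any single computation: the tiles of Proposition~\ref{2b} cannot be placed at all when there are fewer than $4$ columns, and those of Proposition~\ref{3b} need at least $3$, so the \emph{thin} arrays with $n\in\{2,3\}$ (and the degenerate $m=1$) must be handled separately. They are rescued precisely by transposition together with the observation that the hypothesis $v-t\ge 15$, combined with $v-t=\tfrac{2mn}{\lambda}$, already forces the other side of the array to be long enough for the argument to go through; checking the block sizes against $v-t$ and verifying the elementary representability facts above is then routine.
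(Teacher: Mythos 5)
Your proposal is correct and takes essentially the same approach as the paper: partition the rows into strips of two or three consecutive rows, tile each strip with the nice rectangular tiles of Propositions~\ref{3b} and~\ref{2b} of widths in $\{3,4,5\}$ and $\{4,5,6,7\}$ respectively, and invoke Theorem~\ref{tiling}, with $m=1$ deferred to Proposition~\ref{m1}. The only cosmetic difference is that the paper assumes $m\le n$ at the outset (so the thin cases reduce to $m=n=3$, handled as a single nice tile, and $mn\le 6$, excluded by $v-t\ge 15$), whereas you realize the same symmetry by transposing the $n\in\{2,3\}$ cases.
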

\proof
We denote by $B$ the set of cells of an $m\times n$ totally filled array and we assume, without loss of generality, that $m\leq n$.

In the following we will assume that $m>1$ since the case $m=1$ has already been considered in Proposition \ref{m1}.
Since $m>1$, we can partition the rows of $B$ into sets $\mathcal{H}_1,\dots,\mathcal{H}_{\ell}$ of consecutive rows such that each $\mathcal{H}_i$ contains either two or three rows. Here we will say that $\mathcal{H}_i$ has weight, respectively $2$ or $3$.

Now, if $n\geq 4$, we can partition the cells that belong to each set $\mathcal{H}_i$ of weight $2$ with tiles defined in Proposition \ref{2b}: it suffices to use tiles with $b\in \{4,5,6,7\}$ (i.e. that has $4,5,6$ or $7$ non-empty consecutive columns).

Similarly, if $n\geq 3$, we can partition the cells that belong to each set $\mathcal{H}_i$ of weight $3$ with tiles defined in Proposition \ref{3b}: it suffices to use tiles with $b\in \{3,4,5\}$ (i.e. that has $3,4$ or $5$ non-empty consecutive columns).

Summing up, if $n\geq 4$, we have that:
\begin{itemize}
\item[1)] it is possible to partition $B$ into nice tiles $T_1,\dots,T_{\ell}$;
\item[2)]$\max_i(|T_i|)\leq 15.$
\end{itemize}
Therefore, because of Theorem \ref{tiling}, there exists a $^\lambda \N\mathrm{H}_t(m,n)$ over $G$ relative to $J$ whenever the necessary conditions are satisfied, $n\geq 4$ and $v-t\geq 15$. It follows that the thesis is proved whenever $n\geq 4$.

Now we assume that $n< 4$ which implies that either $n=m=3$ or $mn\leq 6$. In the first case, if $n=m=3$,  $B$ itself is nice, due to Proposition \ref{3b}. Therefore, because of Theorem \ref{tiling}, there exists a $^\lambda \N\mathrm{H}_t(3,3)$ over $G$ relative to $J$ whenever the necessary conditions are satisfied and $v-t \geq 15 > 9$.

Finally, let us suppose that $mn\leq 6$, we have that $v-t\leq 12$ that is in contradiction with the hypothesis that $v-t\geq 15$.

It follows that, there exists a $^\lambda \N\mathrm{H}_t(m,n)$ over $G$ relative to $J$ whenever the necessary conditions are satisfied and $v-t\geq 15$.
\endproof
We remark that, for any group $G$ whose order $v$ is odd we have that $t\leq v/3$ while, if the order is even $t\leq v/2$ where $t$ is the order of a subgroup $J$. It follows that
\begin{cor}\label{resultonv}
Let $G$ be a group of size $v$ and let $J$ be a subgroup of $G$ of size $t$. Then there exists a $^\lambda \N\mathrm{H}_t(m,n)$ over $G$ relative to $J$ whenever the necessary conditions of Section \ref{necessarySection} are satisfied and $v$ is an even integer $v\geq 30$ or $v$ is odd and such that $v\geq 23$.
\end{cor}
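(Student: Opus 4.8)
The plan is to obtain this as a short corollary of Theorem \ref{rectangular}, whose hypothesis is precisely that the necessary conditions hold together with $v-t\geq 15$. So the whole task is to verify that the stated arithmetic conditions on $v$ already force $v-t\geq 15$.

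First I would record that $J$ is necessarily a \emph{proper} subgroup of $G$. Indeed, condition (b) of Remark \ref{necessary} gives $v-t=\frac{2nk}{\lambda}$, and since $nk\geq 1$ and $\lambda\geq 1$ this is a positive integer; hence $t<v$. By Lagrange's theorem $t=|J|$ divides $v=|G|$, so $t$ is a proper divisor of $v$.

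Next I would invoke the elementary fact that the largest proper divisor of $v$ equals $v/p$, where $p$ is the smallest prime dividing $v$. If $v$ is even then $p=2$, so $t\leq v/2$ and therefore $v-t\geq v/2$, which is $\geq 15$ as soon as $v\geq 30$. If $v$ is odd then $p\geq 3$, so $t\leq v/3$ and therefore $v-t\geq 2v/3$, which is $\geq 15$ as soon as $v\geq 23$ (the smallest odd integer exceeding $22.5$). In both cases $v-t\geq 15$, and Theorem \ref{rectangular} yields the desired $^\lambda \N\mathrm{H}_t(m,n)$ over $G$ relative to $J$.

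There is no real obstacle here: all of the substance is contained in Theorem \ref{rectangular}, and the present statement is the bookkeeping step that converts the condition $v-t\geq 15$ into a clean bound on $v$ alone, using only Lagrange's theorem and the arithmetic of divisors. The one point to state with a little care is that $J$ is proper — otherwise $t$ need not be bounded by $v/2$ or $v/3$ — which is why I would begin by noting $v-t=\frac{2nk}{\lambda}\geq 1$.
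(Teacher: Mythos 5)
Your proof is correct and follows essentially the same route as the paper: the paper deduces the corollary from Theorem \ref{rectangular} via the same observation that $t\leq v/2$ for even $v$ and $t\leq v/3$ for odd $v$, so that $v-t\geq 15$ under the stated bounds. Your explicit justification that $J$ is proper (via $v-t=\frac{2nk}{\lambda}>0$) is a small point the paper leaves implicit, but it does not change the argument.
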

\section{Partially Filled Arrays}
In this section, we will consider the case of arrays that are not totally filled. Due to the necessary conditions of Section \ref{necessarySection}, if a $^\lambda \N\mathrm{H}_t(m,n;h,k)$ exists, we must have that $nk=mh$ that implies that $\lcm(m,n)|(nk)$. In the following we will provide a construction of a $^\lambda \N\mathrm{H}_t(m,n;h,k)$ according to whether $r=\frac{nk}{\lcm(m,n)}$ is $1,2$ or at least $3$.
\subsection{Case $r=1$}
\begin{prop}\label{h=1}
Let $G$ be a group of size $v$ and let $J$ be a subgroup of $G$ of size $t$. Then there exists a $^\lambda \N\mathrm{H}_t(m,n;1,k)$ over $G$ relative to $J$ assuming that the necessary conditions of Section \ref{necessarySection} are satisfied and $v-t\geq 4$.
\end{prop}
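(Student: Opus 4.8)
The plan is to reduce the statement to a purely combinatorial "split $\Omega$ into columns" problem and then to dispatch the cases according to how many involutions $G\setminus J$ contains. Since $h=1$, condition (a) of Remark~\ref{necessary} forces $m=nk$, so indeed $r=\frac{nk}{\lcm(m,n)}=1$, consistent with this subsection. I would realise the skeleton concretely: split the $nk$ rows into $n$ consecutive blocks $\mathcal B_1,\dots,\mathcal B_n$ of $k$ rows each, and let column $C_j$ be filled exactly on the rows of $\mathcal B_j$; then every row has a unique filled cell and every column has $k$ of them. Because condition (b) forces $|\E(A)|=nk=\frac{\lambda}{2}(v-t)$, every entry of $A$ must already lie in $G\setminus J$, so every row sum (a single element) is automatically nonzero. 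Thus the whole problem becomes: fill the $n$ columns so that $\{\pm x:x\in A\}=(G\setminus J)^{\lambda}$ and the ordered sum of each column is $\neq 0$. I would then record two harmless degrees of freedom that do not interfere with the first requirement: replacing any single entry $x$ by $-x$ leaves $\{\pm x:x\in A\}$ unchanged, and the entries of a fixed column may be permuted arbitrarily; moreover distinct columns are supported on disjoint rows, hence can be corrected independently.

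Treating $k=1$ as trivial, for $k\ge 2$ I would isolate one basic local move: if a column $C_j$ sums to $0$ and contains a non-involution $a$, reorder $C_j$ so that $a$ is its topmost entry; if the new ordered sum is still $0$ it equals $a+s$, where $s$ is the ordered sum of the remaining $k-1$ entries, so $s=-a$, and replacing $a$ by $-a$ makes the column sum equal $-a+s=-a+(-a)=-2a\neq 0$ because $a$ is not an involution. Hence any column containing a non-involution can be made to have nonzero sum without affecting the others. Consequently, if $G\setminus J$ contains a non-involution — which includes every case with $\lambda$ odd, since then $G\setminus J$ has no involutions at all — I would pick any multiset $\Omega$ with $\pm\Omega=(G\setminus J)^{\lambda}$, distribute it among the columns by an arbitrary partition into parts of size $k$, first ensure (by swapping a non-involution entry into any all-involution column, exactly as in the proof of Proposition~\ref{m1}, while keeping the donor column nonzero — possible since it still has $k\ge 2$ entries and one may also reorder and negate a non-involution there) that every column contains a non-involution, and then apply the basic move to each column that still sums to $0$. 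This settles all $G$ having a non-involution outside $J$, in particular all non-abelian $G$ except some with $G\setminus J$ entirely made of involutions.

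The remaining case is $G\setminus J$ consisting only of involutions; then condition (d) forces $\lambda$ even and the multiset of entries is rigidly $\Omega=(G\setminus J)^{\lambda/2}$. If $G$ is non-abelian one still has reordering inside a column (a transposition of two non-commuting involutions changes the ordered sum), so it suffices to distribute $\Omega$ so that no column becomes a zero-sum block of pairwise commuting involutions, which a greedy choice of columns together with swaps of entries of different value between a zero-sum column and another column achieves. If $G$ is abelian then $G\cong\mathbb{Z}_2^{r}$, and here neither reordering nor sign changes help, so the column sums depend only on how $\Omega$ is cut into $n$ parts: for $n=1$ the array is a single column whose transpose is a one-row array, and existence is decided exactly by Theorem~\ref{z2h}, part of the necessary conditions of Section~\ref{necessarySection}; for $n\ge 2$ (and $k\ge 2$) I would partition $\Omega$ into $n$ blocks of size $k$ each with nonzero XOR, using that $v-t\ge 4$ gives at least two distinct values in $\Omega$ and peeling off blocks with a prescribed parity pattern of value-multiplicities (exactly one value of odd multiplicity if $k$ is odd, exactly two if $k$ is even), keeping at least two distinct values available until the last block.

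The step I expect to be the main obstacle is precisely this $\mathbb{Z}_2^{r}$ sub-case: there the multiset of entries is forced and all the slack used elsewhere (sign flips, non-abelian reordering) disappears, so one must establish a delicate "zero-sum-avoiding partition" lemma for multisets of involutions and, simultaneously, correctly delimit via Theorem~\ref{z2h} the one-column configurations that genuinely fail to exist. Everything else in the argument is routine bookkeeping once the reduction to column sums and the basic local move are in place.
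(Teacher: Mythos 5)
Your reduction to column sums is sound (with $h=1$ every entry lies in $G\setminus J$, so rows are automatic, and the columns live on disjoint sets of rows), and your basic move (reorder a zero-sum column to put a non-involution $a$ first, then replace $a$ by $-a$ to get sum $-2a\neq 0$) is correct. The gap is in the step where you ``first ensure that every column contains a non-involution'': this is a counting impossibility in general, not just a technicality. Take $G=D_4$, $J=\{e\}$, $\lambda=4$, $k=2$, $n=7$, $m=14$ (all necessary conditions hold and $v-t=7\geq 4$). Here $G\setminus J$ has five involutions and the single non-involution pair $\{r,r^3\}$, so any admissible $\Omega$ contains exactly $\lambda=4$ non-involution entries against $n=7$ columns; at least three columns must consist entirely of involutions no matter how you swap. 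Such columns can sum to zero (e.g.\ two equal involutions), and neither your basic move nor your importation strategy applies to them. The instances themselves are fine --- pairing distinct involutions already gives nonzero sums --- but your stated construction cannot be carried out, and the two fallback branches you describe (non-abelian all-involution case ``by a greedy choice'', and the $\mathbb{Z}_2^r$ ``zero-sum-avoiding partition'' lemma you yourself flag as the main obstacle) are asserted rather than proved. So as written the argument does not close.

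For comparison, the paper avoids every case distinction on involutions. It fixes one cyclic ordering $g_1,\dots,g_{v-t}$ of $G\setminus J$ with $g_i+g_{i+1}\neq 0$ for all $i$ (possible since $v-t\geq 4$), fills the rows with $g_1,g_2,\dots$ cyclically so that each column receives $k$ consecutive $g_j$'s, and then repairs the columns left to right: if $C_i$ is the first zero-sum column, its last entry $g_{j+k-1}$ is exchanged with one of the first two entries of $C_{i+1}$ (these three group elements are distinct because $v-t\geq 4$), and one of the two choices keeps $C_{i+1}$ nonzero as well; the repair only touches the last cell of $C_i$ and the first two of $C_{i+1}$, so it propagates when $k\geq 3$, while $k\leq 2$ is immediate from $g_j\neq 0$ and $g_j+g_{j+1}\neq 0$. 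This works uniformly for all groups, including the elementary abelian and mixed-involution cases where your argument stalls; the $n=1$ column is deferred to Proposition \ref{m1} exactly as you defer it to Theorem \ref{z2h}.
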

\proof
Note that due to the necessary condition $m h = n k$ and $h=1$ we have that $n = m/k$.  Now we denote by $Q$ the set of cells given by
$$Q:=\{(i,1):\ 1\leq i\leq k\}.$$
We consider the subset of the $m\times n$ array defined by:
$$B:=\bigcup_{i=0}^{m/k-1} Q+i(k,1).$$

We order the elements of $G\setminus J$ as $g_1,\dots,g_{v-t}$ in such a way that $g_i+g_{i+1}\not=0$ for any $i\in \{1,\dots,v-t\}$ where the sum is considered modulo $v-t$. Note that this is possible since $v-t\geq 4$. Moreover, if $\lambda$ is odd, we have that $G\setminus J$ does not contain any involution and thus we can assume that $\pm\{g_1,\dots,g_{\frac{v-t}{2}}\}=G\setminus J$  as done in the proof of Theorem \ref{tiling}.

Now we fill the elements of $B$ so that, in the $i$-th row we put the element $g_i$ where the index $i$ is considered modulo $v-t$. We denote by $A_1$ the array so defined.  Since the necessary conditions are satisfied, we have filled $B$ with the elements of:
$$\Omega:=\begin{cases}(G\setminus J)^{\lambda/2}\mbox{ if }\lambda\equiv 0\pmod{2};\\(G\setminus J)^{(\lambda-1)/2}\cup \{g_1,\dots,g_{\frac{v-t}{2}}\}\mbox{ otherwise}.
\end{cases}$$
In both cases, we have that $\pm\E(A_1)=\pm \Omega$ cover $G\setminus J$ exactly $\lambda$ times.  We note that the filled cells in the $i$-th column are those of $Q+(i-1)(k,1)$. Here, if $k=1$ or $k=2$ we obtain a $^\lambda \N\mathrm{H}_t(m,n;1,k)$ since, for any $j\in \{1,\dots,v-t\}$, both $g_{j}$ and $g_{j}+g_{j+1}$ are nonzero.

Let now assume $k\geq 3$ and $n\geq 2$ since the case $n=1$ (or $m=1$) has already been considered in Proposition \ref{m1}. We consider the first column $C_i$,  whose elements are $g_j,  g_{j+1},  \ldots, g_{j+k-1}$ where $j = (i-1)k$,  that sums to zero.  Here the next column is $C_{i+1}$ whose elements are $g_{j+k},g_{j+k+1},\dots, g_{j+2k-1}$.  We note that, since $v-t\geq 4$,  $g_{j+k-1} \not \in \{g_{j+k},  g_{j+k+1} \}$ and hence given $x \in \{ g_{j+k},  g_{j+k+1} \}$ we have that
$$
	g_j + g_{j+1} + \ldots + g_{j+k-2} + x \neq 0.
$$
Let now assume that
\begin{equation}\label{eq:scambio1}
	g_{j+k-1} + g_{j+k+1} + g_{j+k+2} +  \ldots + g_{j+2k-1} = 0,
\end{equation}
since otherwise we can interchange the last element of $C_i$ and the first of $C_{i+1}$ to ensure that both these columns do not sum to zero.  From equation \eqref{eq:scambio1} it follows that
$$
	g_{j+k-1} + g_{j+k} + g_{j+k+2} +  \ldots + g_{j+2k-1} \neq 0.
$$
Therefore,  also in this case,  we can ensure that the columns $C_i$ and $C_{i+1}$ do not sum to zero by permuting the elements $\{g_{j+k-1},  g_{j+k},  g_{j+k+1}\}$ following the cycle $(g_{j+k-1},  g_{j+k}, $ $g_{j+k+1})$. 

Note that this procedure can be reiterated since it involves the last element of $C_i$, the first two elements of $C_{i+1}$ and since $k \geq 3$.  Hence,  we eventually obtain a $^\lambda \N\mathrm{H}_t(m,n;1,k)$ also when $k\geq 3$.
\endproof
\begin{defi}
We name by $Q$ the set of cells of an $m\times n$ array given by
$$Q:=\{(i,j):\ 1\leq i\leq k; \ 1\leq j\leq h\}.$$
Then, given $b$ such that $bk\leq m$ and $bh\leq n$, the set of cells given by
$$T:=\bigcup_{i=0}^{b-1} Q+i(k,h)$$
is said to be an $(k,h)$-stair of length $b$.
\end{defi}

\begin{prop}
A $(3,2)$-stair of length $b$, $T$, is nice whenever $b\geq 2$.
\end{prop}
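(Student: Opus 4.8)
The plan is to run the same expected-value argument used for Propositions~\ref{3b} and~\ref{2b}, now with the ground set $S$ of size $|T|=6b$. So fix a finite group $G$, a subset $S\subseteq G$ with $|S|=6b$, and vectors $(r_1,\dots,r_m)\in G^m$, $(c_1,\dots,c_n)\in G^n$. First I would record the shape of the tile: a $(3,2)$-stair of length $b$ is the disjoint union of the solid $3\times 2$ blocks $Q+i(3,2)$, $i=0,\dots,b-1$, so each non-empty row meets exactly one block in two consecutive columns and each non-empty column meets exactly one block in three consecutive rows. In particular, properties $(c_1)$ and $(c_2)$ of Definition~\ref{nice} hold for \emph{any} filling $A$ with $skel(A)=T$, with $\beta_i$ the left column and $\gamma_j$ the top row of the relevant block. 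Hence it suffices to exhibit a filling $A$ with $skel(A)=T$, $\E(A)=S$, no non-empty row summing (left to right) to its $r_i$, and no non-empty column summing (top to bottom) to its $c_j$.

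Next I would choose $A$ uniformly at random among the bijections $T\to S$ and bound the bad events as in Proposition~\ref{3b}. A non-empty row $R_i$ has two cells; conditioning on the value of the left cell, the value of the right cell is uniform over the remaining $6b-1$ elements of $S$, and at most one of them makes the ordered row sum equal to $r_i$, so $\mathbb{P}(X_i)\le \frac{1}{6b-1}$, and there are $3b$ such rows. A non-empty column $C_j$ has three cells; conditioning on its top two values, the bottom value is uniform over $6b-2$ elements, at most one of which gives column sum $c_j$, so $\mathbb{P}(Y_j)\le \frac{1}{6b-2}$, and there are $2b$ such columns. By linearity of expectation, the expected number of bad rows and columns is at most
$$\frac{3b}{6b-1}+\frac{2b}{6b-2}=\frac{3b}{6b-1}+\frac{b}{3b-1}.$$

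Finally I would verify that this quantity is strictly below $1$ for every $b\ge 2$. Each summand is a decreasing function of $b$ (their derivatives are $-3/(6b-1)^2$ and $-1/(3b-1)^2$), so for $b\ge 2$ the sum is at most its value at $b=2$, namely $\frac{6}{11}+\frac{2}{5}=\frac{52}{55}<1$. Therefore with positive probability the random filling has neither a bad row nor a bad column, which produces an array satisfying $(a)$--$(d_2)$ of Definition~\ref{nice} and shows that $T$ is nice. I do not anticipate a genuine obstacle: the only point requiring care is the threshold $b\ge 2$, which is exactly the hypothesis (for $b=1$ the bound would read $\frac{3}{5}+\frac{1}{2}=\frac{11}{10}>1$); intuitively, with $b\ge 2$ blocks the ground set $S$ stays large relative to the fixed $3\times 2$ block size, so each of the $5b$ bad events has probability $O(1/b)$ while their expected total remains below $1$.
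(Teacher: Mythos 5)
Your proposal is correct and follows essentially the same route as the paper: a uniformly random bijection from the cells of $T$ to $S$, the bounds $\mathbb{P}(X_i)\le \frac{1}{6b-1}$ over $3b$ rows and $\mathbb{P}(Y_j)\le \frac{1}{6b-2}$ over $2b$ columns, and linearity of expectation. The only (immaterial) difference is the final numerical check, where the paper bounds the sum by $\frac{5b}{6b-2}$ while you evaluate at $b=2$ and use monotonicity; both yield a total strictly below $1$ for $b\ge 2$.
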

\proof
Let us consider a set $S$ of elements of a group $G$ and vectors $(r_1,\dots, r_m)\in G^m$, $(c_1,\dots, c_n)\in G^n$.
We note that $T$ has exactly $3b$ rows each with $2$ non-empty cells. Here we consider a non-empty row $R_i$ and we denote by $(i,\beta),(i,\beta+1)$ its non-empty cells where the sum is considered modulo $n$.
Now we chose, uniformly at random, an array $A$ such that $skel(A)=T$ and $\E(A)=S$ and we denote by $\mathbb{P}(X_i)$ the probability of the event $X_i$ that the $i$-th row sums, following the natural ordering, to $r_i$.
Here we have that, chosen the first element $a_{i,\beta}$ of the $i$-th row there is at most one element $x\in S$ such that
$$a_{i,\beta}+x=r_i.$$
It follows that
$$\mathbb{P}(X_i)\leq \frac{1}{|S|-1}=\frac{1}{6b-1}.$$
Now we denote by $\mathbb{E}(X)$ the expected value of the random variable $X$ given by the number of non-empty rows $R_i$ that sums to $r_i$.
Due to the linearity of the expected value, we have that:
$$\mathbb{E}(X)=\sum_{i:\ skel(R_i)\cap T\not=\emptyset}\mathbb{P}(X_{i})\leq \frac{3b}{6b-1}.$$

Similarly, given a non-empty column $C_j$, we denote by $\mathbb{P}(Y_j)$ the probability of the event $Y_j$ that $C_j$ sums to $c_j$.
Proceeding as we did for the rows, and noting that the number of the elements in the $j$-th column is three, we have that
$$\mathbb{P}(Y_i)\leq \frac{1}{|S|-2}=\frac{1}{6b-2}.$$
Therefore, using again the linearity of the expected value, and denoting by $\mathbb{E}(Y)$ the expected value of the random variable $Y$ given by the number of non-empty columns $C_j$ that sums to $c_j$, we have that:
$$\mathbb{E}(Y)=\sum_{j:\ skel(C_j)\cap T\not=\emptyset}\mathbb{P}(X_{j})\leq \frac{2b}{6b-2}.$$
It follows that
$$\mathbb{E}(X)+\mathbb{E}(Y)\leq \frac{3b}{6b-1}+\frac{2b}{6b-2}< \frac{5b}{6b-2}.$$
Since we are assuming that $b\geq 2$, it follows that $$\frac{5b}{6b-2}\leq 1.$$
Therefore, there exists an assignment of the values of the cells of $T$ among $S$ that satisfies the conditions of Definition \ref{nice}.
\endproof

\begin{ex}
Here we show an example of a $(3,2)$-stair of length $3$, $T$, $|T| = 18$, of an $m \times n$ array with $m=9$ and $n=6$.
\begin{center}
$\begin{array}{|r|r|r|r|r|r|}
\hline \bullet & \bullet & \:\: & \:\: & \:\: & \:\:\\
\hline \bullet & \bullet & \:\: & & \:\: & \:\:\\
\hline \bullet & \bullet & \:\: & & \:\: & \:\:\\

\hline \:\: & \:\: & \bullet & \bullet & \:\: & \:\:\\
\hline \:\: & \:\: & \bullet & \bullet & \:\: & \:\:\\
\hline \:\: & \:\: & \bullet & \bullet & \:\: & \:\:\\

\hline \:\: & \:\: & \:\: & \:\: & \bullet & \bullet \\
\hline \:\: & \:\: & \:\: & \:\: & \bullet & \bullet \\
\hline \:\: & \:\: & \:\: & \:\: & \bullet & \bullet \\

\hline
\end{array}$
\end{center}
\end{ex}

\begin{prop}
Let $G$ be a group of size $v$ and let $J$ be a subgroup of $G$ of size $t$. Then there exists a $^\lambda \N\mathrm{H}_t(m,n;h,k)$ over $G$ relative to $J$ assuming that $r=\frac{nk}{\lcm(m,n)}= 1$, the necessary conditions of Section \ref{necessarySection} are satisfied and $v-t\geq 18$.
\end{prop}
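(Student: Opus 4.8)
The plan is to exhibit, in the case $r=1$, a staircase-shaped skeleton $B$ and to partition it into nice tiles of size at most $18$, so that the conclusion follows from Theorem~\ref{tiling}. First I would read off the structure forced by $r=1$: since the necessary conditions give $nk=mh=\lcm(m,n)$, setting $d=\gcd(m,n)$ one gets $m=dk$, $n=dh$ and $\gcd(h,k)=1$, while $v-t=\frac{2nk}{\lambda}$. I would then take
$$B=\bigcup_{i=0}^{d-1}\bigl(Q+i(k,h)\bigr),\qquad Q=\{(i,j):1\le i\le k,\ 1\le j\le h\}.$$
By the definition of a $(k,h)$-stair this is a $(k,h)$-stair of length $d$; it has exactly $h$ filled cells in each row and $k$ in each column, and since $dk=m$ and $dh=n$ the $d$ blocks $Q+i(k,h)$ occupy pairwise disjoint sets of rows and of columns (no wrap-around occurs). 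Hence, by Theorem~\ref{tiling}, it suffices to partition $B$ into nice tiles of size $\le v-t$, and in fact I will keep every tile of size $\le 18$.

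I would then split into cases on $(h,k)$. The easy ones: if $\min(h,k)=1$, then (transposing $A$ if $k=1$; a transpose sends a $^\lambda\N\mathrm{H}_t(m,n;h,k)$ to a $^\lambda\N\mathrm{H}_t(n,m;k,h)$, since the defining conditions are symmetric in rows and columns) the array is produced by Proposition~\ref{h=1}, whose hypothesis $v-t\ge4$ is implied by $v-t\ge18$. If $\min(h,k)\ge2$ and $\max(h,k)\ge4$, transpose if necessary so that $h\ge4$ and $k\ge2$, and tile $Q$ directly: split the $k$ rows of $Q$ into consecutive groups of $2$ or $3$ rows (possible for every $k\ge2$), and above each group of $3$ rows split the $h$ columns into consecutive runs of lengths in $\{3,4,5\}$, above each group of $2$ rows into runs of lengths in $\{4,5,6,7\}$ (possible since every integer $\ge3$ is a sum of $3$'s, $4$'s, $5$'s and every integer $\ge4$ is a sum of $4$'s, $5$'s, $6$'s, $7$'s). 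Each resulting tile is a $3\times b$ or a $2\times b$ block, hence nice by Propositions~\ref{3b} and~\ref{2b}, and of size $\le15$; translating this partition of $Q$ by the vectors $i(k,h)$ for $i=0,\dots,d-1$ partitions $B$ into nice tiles of size $\le15\le v-t$, and Theorem~\ref{tiling} concludes.

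The remaining case, $\min(h,k)\ge2$ and $\max(h,k)\le3$, is where the work is: since $\gcd(h,k)=1$ this forces $\{h,k\}=\{2,3\}$, so $|Q|=6$ and $B$ is a $(k,h)$-stair of length $d$ with $(k,h)\in\{(3,2),(2,3)\}$. Here $v-t=\frac{2nk}{\lambda}=\frac{12d}{\lambda}\ge18$ forces $d\ge2$. Since a $2\times3$ (resp. $3\times2$) block is not nice, I would not tile $Q$ itself but instead merge consecutive diagonal blocks: write $d$ as a sum of $2$'s and $3$'s (possible for every $d\ge2$) and cut the length-$d$ stair accordingly into consecutive sub-stairs, each a translate of a $(3,2)$-stair or a $(2,3)$-stair of length $2$ or $3$. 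Each such tile is nice — by the $(3,2)$-stair proposition above, and by the same computation applied to the transposed array in the $(2,3)$-case — and has size $\le18$, so Theorem~\ref{tiling} applies. The main obstacle is precisely this last case: one must realize that the width-$2$/width-$3$ staircase cannot be covered by the block tiles of Propositions~\ref{3b}–\ref{2b} (coprimality of $h$ and $k$ rules out the only workaround, $3\mid k$), that merging diagonal blocks into short stairs is therefore forced, and that a length-$3$ stair of $6$-cell blocks is exactly what pins the threshold at $v-t\ge18$.
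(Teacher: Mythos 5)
Your proposal is correct and follows essentially the same route as the paper: reduce to Theorem~\ref{tiling} by tiling the staircase skeleton $B=\bigcup_i Q+i(k,h)$, using Proposition~\ref{h=1} when $\min(h,k)=1$, $(3,2)$-stairs of length $2$ or $3$ when $\{h,k\}=\{2,3\}$ (which is exactly where the threshold $v-t\geq 18$ comes from), and the rectangular tiles of Propositions~\ref{3b} and~\ref{2b} otherwise. Your explicit transposition so that the block $Q$ has at least $4$ columns before invoking the rectangular tiling is in fact slightly more careful than the paper's bare appeal to ``as done in Theorem~\ref{rectangular}'', which as written would run into trouble when the smaller of $h,k$ equals $3$ and the larger is not a multiple of $3$.
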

\proof
In this case we have that $nk=mh=\lcm(m,n)$. It follows that
$$k= \frac{\lcm(m,n)}{n}$$
and
$$h= \frac{\lcm(m,n)}{m}.$$
We name by $Q$ the set of cells given by
$$Q:=\{(i,j):\ 1\leq i\leq k; \ 1\leq j\leq h\}.$$
Then, since
$$\frac{n}{h}=\frac{m}{k}=\frac{mn}{\lcm(m,n)}=\gcd(m,n),$$
we can consider the subset of the $m\times n$ array defined by:
$$B:=\bigcup_{i=0}^{n/h-1} Q+i(k,h).$$
Note that $k=h$ would imply that $m=n=\lcm(m,n)$ and hence, since, $r=1$ it would be possible only if $k=h=1$.
In this case any array $A$ such that $skel(A)=B$ and $\E(A)$ is $\Omega$ where $$\Omega:=\begin{cases}(G\setminus J)^{\lambda/2}\mbox{ if }\lambda\equiv 0\pmod{2};\\(G\setminus J)^{(\lambda-1)/2}\cup \{g_1,\dots,g_{\frac{v-t}{2}}\}\mbox{ otherwise},
\end{cases}$$
{ is a $^\lambda \N\mathrm{H}_t(m,n;h,k)$ over $G$ relative to $J$. Here, if $\lambda$ is odd, we are assuming that $\pm\{g_1,\dots,g_{\frac{v-t}{2}}\}=G\setminus J$, as done in the proof of Theorem \ref{tiling}.}

We can suppose now that $h<k$.

CASE $h=1$: This case follows from Proposition \ref{h=1}.

CASE $h=2$ and $k=3$: Here we can assume that $(m,n)\not=(3,2)$ since otherwise $B$ would be the skeleton of a totally filled array.  In this case $B$ can be tessellated with tiles of type $(3,2)$-stair of length either two or three.
Since those tiles are nice and their sizes are at most $18$, they define a $^\lambda \N\mathrm{H}_t(m,n;h,k)$ over $G$ relative to $J$.

Otherwise we have that $h\geq 3$ and $k>h$. In this case, we have that $Q$ itself can be tessellated with tiles whose sizes are at most $15$ as done in Theorem \ref{rectangular}. Hence the same can be done, by translation, with $B$.

It follows that, assuming $r=1$, there exists a $^\lambda \N\mathrm{H}_t(m,n;h,k)$ over $G$ relative to $J$ whenever the necessary conditions are satisfied and $v-t\geq 18$.
\endproof
\subsection{Case $r=2$}
\begin{prop}\label{kh2}
Let $G$ be a group of size $v$ and let $J$ be a subgroup of $G$ of size $t$. Then there exists a $^\lambda \N\mathrm{H}_t(n;2)$ over $G$ relative to $J$ assuming that the necessary conditions of Section \ref{necessarySection} are satisfied and $v-t\geq 4$.
\end{prop}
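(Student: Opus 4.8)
The plan is to handle the case $^\lambda\N\mathrm{H}_t(n;2)$ directly, exploiting the fact that this is a square array where each row and each column has exactly two filled cells. First I would fix a skeleton $B$: take $Q:=\{(1,1),(2,1)\}$ and set $B:=\bigcup_{i=0}^{n-1}\big(Q+i(1,1)\big)$, with arithmetic on both coordinates modulo $n$. This is exactly the skeleton used in the proof of Theorem \ref{th:ExistenceLambdaAbelianNoInv} for the case $k=2$; every row $R_i$ is filled in columns $i-1,i$ and every column $C_j$ is filled in rows $j,j+1$ (indices mod $n$). Thus the filled positions form a single cyclic band, and the $2n$ filled cells will be assigned the $2n$ entries of a multiset $\Omega$ with $\pm\Omega=(G\setminus J)^\lambda$, just as in the earlier constructions: $\Omega=(G\setminus J)^{\lambda/2}$ when $\lambda$ is even, and $\Omega=(G\setminus J)^{(\lambda-1)/2}\cup\{g_1,\dots,g_{(v-t)/2}\}$ when $\lambda$ is odd (legitimate since then $G\setminus J$ is involution-free, so we may enumerate it with $-g_i$ opposite $g_i$ as in Theorem \ref{tiling}).

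Next I would order the elements so that consecutive entries around the band never cancel. Concretely, order $G\setminus J$ as $g_1,\dots,g_{v-t}$ with $g_i+g_{i+1}\neq 0$ for all $i$ (indices mod $v-t$), which is possible because $v-t\geq 4$ — this is the same combinatorial fact used in Proposition \ref{h=1}. Then fill position $(i,i)$ and $(i+1,i)$ with two consecutive elements of the cyclic list $\Omega$ (reading $\Omega$ as a cyclic sequence of length $2n$ built by concatenating copies of the ordered list). Since every column $C_j$ consists of two cyclically consecutive list elements $g_s,g_{s+1}$ whose sum is nonzero by construction, all column sums are automatically nonzero. The only sums that may vanish are the row sums: row $R_i$ consists of the element in $(i,i-1)$ — which is the ``lower'' element of column $C_{i-1}$ — and the element in $(i,i)$ — the ``upper'' element of column $C_i$.

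The remaining step is to correct the rows that sum to zero without disturbing the columns. Suppose $R_i$ sums to zero, say its entries are $x$ (from $(i,i-1)$) and $y$ (from $(i,i)$). I would argue that one of the two moves — replacing $y$ by $-y$, or permuting the triple of entries $\{$last of $C_{i-1}$, $y$, next entry of $C_i\}$ cyclically — makes $R_i$ nonzero while keeping both $C_{i-1}$ and $C_i$ nonzero, exactly as in the three-element cyclic permutation argument of Proposition \ref{h=1}; alternatively, since $G\setminus J$ contains a non-involution and the band is a single cycle, a global relabelling argument analogous to CASE 2 of Proposition \ref{m1} lets a single swap of two adjacent list elements flip the offending row. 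Because each such local repair only touches one row and its two incident columns, and because the columns were already safe with slack, iterating the repair around the band terminates and yields the desired array. The main obstacle is the bookkeeping in the last step: ensuring that repairing one row does not reintroduce a zero in an adjacent row or column, so the repair order must be chosen carefully (e.g. process the rows cyclically and use the freedom in the three-element permutation to preserve all previously fixed sums), and one must separately check the small edge cases where $n$ is tiny or where $\lambda$ parity forces the odd-case enumeration.
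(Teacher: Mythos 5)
Your skeleton and your non-cancelling cyclic ordering of $G\setminus J$ (with $g_i+g_{i+1}\neq 0$, possible since $v-t\geq 4$) are exactly the ingredients of the paper's proof, but your analysis of what can go wrong is off, and the step you lean on to finish is precisely the part that is not established. First, in the even-$\lambda$ case your own construction already works with no repair at all: column $C_j$ receives sequence positions $2j-1,2j$ and row $R_i$ receives positions $2i-2,2i-1$, so \emph{every} row and every column is a cyclically consecutive pair $\{g_s,g_{s+1}\}$ (the wrap-around is clean because $2n$ is a multiple of $v-t$ when $\lambda$ is even), hence all sums are nonzero by the choice of ordering. Your claim that ``the only sums that may vanish are the row sums'' and the ensuing repair machinery are unnecessary here.

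The genuine gap is the odd-$\lambda$ case, where your cyclic sequence of length $2n=\tfrac{\lambda}{2}(v-t)$ ends with the half-copy $g_1,\dots,g_{(v-t)/2}$, so at the wrap-around one row pairs $g_{(v-t)/2}$ with $g_1$, which are not consecutive in the ordering and may sum to zero. You defer this to a repair step (``replace $y$ by $-y$ or permute a triple \dots the repair order must be chosen carefully''), but none of it is verified: negating $y$ changes a column sum and can create a new zero there (and fails outright if $y$ is an involution, since then the row $x+y=0$ gives $x-y=-2y=0$ as well); the three-element cyclic permutation of Proposition \ref{h=1} is tailored to columns of length $k\geq 3$ and does not transfer to this $2\times 2$ local situation without a new argument. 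The paper avoids the issue entirely: for $\lambda$ odd it fills the array using only $\{g_1,\dots,g_{(v-t)/2}\}^{\lambda}$ (choosing the enumeration so that $\pm\{g_1,\dots,g_{(v-t)/2}\}=G\setminus J$), so that no entry and its negative both appear in the array and therefore no two entries whatsoever can sum to zero. You should either adopt that trick or supply a complete case analysis for your repair; as written, the proof is incomplete exactly where it matters.
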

\proof
We name by $Q$ the set of cells given by
$$Q:=\{(1,j):\ 1\leq j\leq 2\}.$$
We consider the subset of the $n\times n$ array defined by:
$$B:=\bigcup_{i=0}^{n-1} Q+i(1,1)$$
where the sum is considered modulo $n$.

We order the elements of $G\setminus J$ as $g_1,\dots,g_{v-t}$ in such a way that $g_i+g_{i+1}\not=0$ for any $i\in \{1,\dots,v-t\}$ where the sum is considered modulo $v-t$. Note that this is possible since $v-t\geq 4$. Moreover, if $\lambda$ is odd, we have that $G\setminus J$ does not contain any involution and thus,  as done in the proof of Theorem \ref{tiling}, we can assume that $\pm\{g_1,\dots,g_{\frac{v-t}{2}}\}=G\setminus J$.

Now we divide the proof according to whether $\lambda$ is even or odd.

CASE 1: $\lambda$ is even. Here we fill the elements of $B$ so that,  we put in the $i$-th row, in order, the elements $g_{2i-1}$ and $g_{2i}$ where the indexes are considered modulo $v-t$. We denote by $A$ the array so defined.

Since the necessary conditions are satisfied, we have that $skel(A)=B$ and $\E(A)=\Omega$ where
$$\Omega=(G\setminus J)^{\lambda/2}.
$$
Moreover, because of the definition and considering the indexes modulo $v-t$, we have that for any $i\in [1,n]$,
$$ \begin{cases}
g_{2i-1}+g_{2i}\not=0;\\
g_{2i}+g_{2i+1}\not=0.
\end{cases}$$
Note that, since $\lambda$ is even, here we have that $\E(C_1)=\{g_1,g_{v-t}\}$.  It follows that $\E(R_i)=\{g_{2i-1},g_{2i}\}$ and $\E(C_{i+1})=\{g_{2i},g_{2i+1}\}$, where the row indexes are considered modulo $n$ and the group ones modulo $v-t$, and hence $A$ is a $^\lambda \N\mathrm{H}_t(n;2)$ over $G$ relative to $J$.

CASE 2: $\lambda$ is odd. Here we fill the elements of $B$ so that, in the $i$-th row we put, in order, the elements $g_{2i-1}$ and $g_{2i}$ where the indexes are considered modulo $\frac{v-t}{2}$. We denote by $A$ the array so defined.

Since the necessary conditions are satisfied, we have that $skel(A)=B$ and $\E(A)=\Omega$ where
$$\Omega=\{g_1,\dots,g_{\frac{v-t}{2}}\}^{\lambda}.
$$
Moreover, because of the definition of $A$, we have that if $x\in \E(A)$, then $-x\not\in \E(A)$.
It follows that the sum over each row and column is non-zero.
Therefore, $A$ is a $^\lambda \N\mathrm{H}_t(n;2)$ over $G$ relative to $J$.
\endproof
\begin{defi}
We name by $Q$ the set of cells of an $m\times n$ array given by
$$Q:=\{(i,j):\ 1\leq i\leq k/2; \ 1\leq j\leq h/2\}.$$
Then, given $b$ such that $bk/2\leq m$ and $bh/2\leq n$, the set of cells given by
$$T:=\left(\bigcup_{i=0}^{b-1} Q+i(k/2,h/2)\right)\bigcup\left(\bigcup_{i=0}^{b-1} Q+(0,h/2)+i(k/2,h/2)\right)$$
is said to be a double $(k/2,h/2)$-stair of length $b$.
\end{defi}

\begin{prop}
A double $(2,1)$-stair of length $b$, $T$, is nice whenever $b\geq 3$.
\end{prop}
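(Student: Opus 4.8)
The plan is to reuse the random-filling scheme that established the nice tiles of Propositions \ref{3b} and \ref{2b}: pick a uniformly random bijective filling of $T$ by the prescribed set $S$ of $|T|$ elements of $G$, and show by linearity of expectation that with positive probability no non-empty row sums to its forbidden value $r_i$ and no non-empty column sums to its forbidden value $c_j$. The first task is to pin down the shape of a double $(2,1)$-stair $T$ of length $b$. Expanding the two unions in the definition, one finds $|T|=4b$, that $T$ occupies the $2b$ rows $R_1,\dots,R_{2b}$ with exactly two cells in each (those two cells lying in two cyclically consecutive columns), and that $T$ occupies $b+1$ columns: the first and the last contain two cells each, while the remaining $b-1$ columns contain four cells each, lying in four cyclically consecutive rows. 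Consequently the consecutivity requirements $(c_1)$ and $(c_2)$ of Definition \ref{nice} hold for every placement of the values in $T$, so only $(d_1)$ and $(d_2)$ remain to be secured. (Should a wraparound identify the first and last columns, the number of four-cell columns only grows and the estimate below is only more favorable, so we may assume the $b+1$ columns are pairwise distinct.)

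Next, let $A$ be chosen uniformly at random among the arrays with $skel(A)=T$ and $\E(A)=S$, and let $X$ (resp. $Y$) count the non-empty rows (resp. columns) whose sum, in the natural ordering starting from the first filled cell, equals $r_i$ (resp. $c_j$). For a non-empty row, once its left-most entry is exposed at most one of the $|S|-1$ remaining values of $S$ completes the sum to $r_i$, so $\mathbb{P}(X_i)\le\frac{1}{4b-1}$. For a two-cell column the same reasoning gives $\mathbb{P}(Y_j)\le\frac{1}{4b-1}$, while for a four-cell column, exposing its first three entries leaves at most one completion summing to $c_j$, so $\mathbb{P}(Y_j)\le\frac{1}{|S|-3}=\frac{1}{4b-3}$. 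By linearity of expectation,
$$\mathbb{E}(X)+\mathbb{E}(Y)\le\frac{2b}{4b-1}+\frac{2}{4b-1}+\frac{b-1}{4b-3}=\frac{2b+2}{4b-1}+\frac{b-1}{4b-3}.$$

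The last step is to check that this upper bound is strictly less than $1$ for every $b\ge 3$; clearing denominators, this is equivalent to $4b^2-13b+8>0$, which holds for $b\ge 3$ since the larger root $\frac{13+\sqrt{41}}{8}$ lies below $3$. Hence $\mathbb{E}(X)+\mathbb{E}(Y)<1$, so some admissible filling $A$ realises $X=Y=0$, and together with the automatic properties $(a)$, $(b)$, $(c_1)$, $(c_2)$ this shows that $T$ is nice. I expect the only genuinely delicate point to be the combinatorial bookkeeping of the shape of $T$ — getting the exact split between two-cell and four-cell columns and verifying that the filled cells of each row and column are cyclically consecutive — because once the cell counts are settled the probabilistic estimate is routine. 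The hypothesis $b\ge 3$ is precisely what the final inequality requires: for $b=2$ the corresponding sum already equals $\frac{37}{35}>1$, so this method does not apply in that case.
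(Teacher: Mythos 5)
Your proof is correct and follows essentially the same route as the paper's: a uniform random filling of $T$ by $S$, the identical cell counts ($2b$ rows with two cells; two end columns with two cells and $b-1$ middle columns with four), the same bounds $\mathbb{P}(X_i)\le\frac{1}{4b-1}$, $\mathbb{P}(Y_j)\le\frac{1}{4b-3}$ or $\frac{1}{4b-1}$, and the same first-moment conclusion. Your explicit verification that the bound is below $1$ exactly when $b\ge 3$ (and fails at $b=2$) is a small welcome addition, but the argument is the paper's.
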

\proof
Let us consider a set $S$ of elements of a group $G$ and vectors $(r_1,\dots, r_m)\in G^m$, $(c_1,\dots, c_n)\in G^n$.
We note that $T$ has exactly $2b$ rows each with $2$ non-empty cells. Here we consider a non-empty row $R_i$ and we denote by $(i,\beta),(i,\beta+1)$ its non-empty cells where the sum is considered modulo $n$. Now we chose, uniformly at random, an array $A$ such that $skel(A)=T$ and $\E(A)=S$ and we denote by $\mathbb{P}(X_i)$ the probability of the event $X_i$ that the $i$-th row sums, following the natural ordering, to $r_i$.
Here we have that, chosen the first element $a_{i,\beta}$ of the $i$-th row there is at most one element $x\in S$ such that
$$a_{i,\beta}+x=r_i.$$
It follows that
$$\mathbb{P}(X_i)\leq \frac{1}{|S|-1}=\frac{1}{4b-1}.$$
Now we denote by $\mathbb{E}(X)$ the expected value of the random variable $X$ given by the number of non-empty rows $R_i$ that sums to $r_i$.
Due to the linearity of the expected value, we have that:
$$\mathbb{E}(X)=\sum_{i:\ skel(R_i)\cap T \not=\emptyset}\mathbb{P}(X_{i})\leq \frac{2b}{4b-1}.$$

Similarly, given a non-empty column $C_j$, we denote by $\mathbb{P}(Y_j)$ the probability of the event $Y_i$ that $C_j$ sums to $c_j$.
Proceeding as we did for the rows, and noting that the number of the elements of the $j$-th column is four except when $j=1$ or $j=b+1$ when is two, we have that,  if $j\in \{2,3,\dots, b\}$
$$\mathbb{P}(Y_j)\leq \frac{1}{|S|-3}=\frac{1}{4b-3}$$
and
$$\mathbb{P}(Y_1)=\mathbb{P}(Y_{b+1}) \leq \frac{1}{|S|-1}=\frac{1}{4b-1}.$$

Therefore, using again the linearity of the expected value, and denoting by $\mathbb{E}(Y)$ the expected value of the random variable $Y$ given by the number of non-empty columns $C_j$ that sums to $c_j$, we have that:
$$\mathbb{E}(Y)=\sum_{j:\ skel(C_j)\cap T\not=\emptyset}\mathbb{P}(Y_{j})\leq \frac{b-1}{4b-3}+\frac{2}{4b-1}.$$
It follows that, assuming $b\geq 3$,
$$\mathbb{E}(X)+\mathbb{E}(Y)\leq \frac{2b}{4b-1}+\frac{b-1}{4b-3}+\frac{2}{4b-1}<1.$$
Therefore, there exists an assignment of the values of the cells of $T$ among $S$ that satisfies the conditions of Definition \ref{nice}.
\endproof

With essentially the same proof we also obtain that:
\begin{prop}
A double $(3,1)$-stair of length $b$, $T$, is nice whenever $b\geq 2$.
\end{prop}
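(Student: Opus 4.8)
The plan is to imitate, essentially verbatim, the proof just given for the double $(2,1)$-stair, changing only the combinatorial parameters. First I would record the geometry of a double $(3,1)$-stair $T$ of length $b$: it is built from two ``staircases'' of $3\times 1$ vertical blocks, the second shifted by one column relative to the first, so that $|T|=6b$; each of the $3b$ non-empty rows meets exactly two consecutive columns, the first and last non-empty columns contain exactly $3$ cells, and each of the remaining $b-1$ non-empty columns contains exactly $6$ consecutive cells. With this description, conditions $(a)$, $(b)$, $(c_1)$, $(c_2)$ of Definition \ref{nice} are immediate, so the whole task reduces to producing, for any group $G$, any $S\subseteq G$ with $|S|=6b$, and any target vectors $(r_1,\dots,r_m)\in G^m$, $(c_1,\dots,c_n)\in G^n$, an array $A$ with $skel(A)=T$ and $\E(A)=S$ satisfying $(d_1)$ and $(d_2)$.

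For this I would pick $A$ uniformly at random among the bijections between the cells of $T$ and $S$ and run the first-moment argument exactly as before. For a non-empty row $R_i$, once its first element (in the natural ordering) is fixed there is at most one element of $S$ completing the sum to $r_i$, so $\mathbb{P}(X_i)\le \frac{1}{|S|-1}=\frac{1}{6b-1}$, and by linearity $\mathbb{E}(X)\le \frac{3b}{6b-1}$. For a non-empty column $C_j$ with $6$ cells, fixing the first five leaves at most one bad last choice, so $\mathbb{P}(Y_j)\le \frac{1}{6b-5}$; for each of the two $3$-cell columns, $\mathbb{P}(Y_j)\le \frac{1}{6b-2}$. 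Summing over the $b-1$ long columns and the $2$ short ones gives $\mathbb{E}(Y)\le \frac{b-1}{6b-5}+\frac{2}{6b-2}$. It then remains to verify
\[
\mathbb{E}(X)+\mathbb{E}(Y)\le \frac{3b}{6b-1}+\frac{b-1}{6b-5}+\frac{2}{6b-2}<1 \quad\text{for all } b\ge 2 .
\]
One clean way is to bound each summand by its extremal value over $b\ge 2$: the first and third are decreasing in $b$, attaining $\frac{6}{11}$ and $\frac{1}{5}$ at $b=2$, while the second is increasing with supremum $\frac{1}{6}$, so the left-hand side is at most $\frac{6}{11}+\frac{1}{6}+\frac{1}{5}=\frac{301}{330}<1$. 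Since $\mathbb{E}(X)+\mathbb{E}(Y)<1$, the union bound produces an assignment realizing no bad event, i.e.\ an $A$ satisfying $(d_1)$ and $(d_2)$, and hence $T$ is nice.

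The only point that actually needs care is the bookkeeping in the third step — counting correctly how many non-empty columns have $6$ cells versus $3$ cells, and then checking that the resulting inequality still holds at the extreme value $b=2$, which is the tightest case and is precisely why the hypothesis can be relaxed from $b\ge 3$ (as required for the double $(2,1)$-stair) to $b\ge 2$ here, the longer columns making the column contribution to the expectation smaller. Everything else is identical to the argument just given for the double $(2,1)$-stair.
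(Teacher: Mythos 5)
Your proposal is correct and is exactly the ``essentially the same proof'' that the paper alludes to for this proposition: the same first-moment argument as for the double $(2,1)$-stair, with the correct cell counts ($3b$ rows of $2$ cells, two end columns of $3$ cells, $b-1$ middle columns of $6$ cells) and the verified bound $\frac{3b}{6b-1}+\frac{b-1}{6b-5}+\frac{2}{6b-2}\leq \frac{6}{11}+\frac{1}{6}+\frac{1}{5}=\frac{301}{330}<1$ for $b\geq 2$. Nothing to add.
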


\begin{ex}
Here we show an example of a double $(2,1)$-stair of length $3$, $T$, $|T| = 12$, of an $m \times n$ array with $m=6$ and $n=4$.
\begin{center}
$\begin{array}{|r|r|r|r|}
\hline \bullet & \bullet & \:\: & \:\: \\
\hline \bullet & \bullet & \:\: & \\
\hline & \bullet & \bullet & \\
\hline & \bullet & \bullet & \\
\hline & & \bullet & \bullet\\
\hline & & \bullet & \bullet\\
\hline
\end{array}$
\end{center}
\end{ex}

We are now ready to prove the following result.
\begin{prop}
Let $G$ be a group of size $v$ and let $J$ be a subgroup of $G$ of size $t$. Then there exists a $^\lambda \N\mathrm{H}_t(m,n;h,k)$ over $G$ relative to $J$ assuming that $r=\frac{nk}{\lcm(m,n)}=2$, the necessary conditions of Section \ref{necessarySection} are satisfied and $v-t\geq 20$.
\end{prop}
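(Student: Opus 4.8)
The plan follows the same template as the $r=1$ case treated just above. Since $\tfrac{h}{r}=\tfrac{\lcm(m,n)}{m}$ and $\tfrac{k}{r}=\tfrac{\lcm(m,n)}{n}$ are integers (as in the proof of Theorem~\ref{th:ExistenceLambdaV3}), both $h$ and $k$ are even; put $\bar h=\tfrac{h}{2}=\tfrac{\lcm(m,n)}{m}$, $\bar k=\tfrac{k}{2}=\tfrac{\lcm(m,n)}{n}$ and $d=\gcd(m,n)$, so that $d\bar h=n$ and $d\bar k=m$. Since one must have $h\le n$, the case $d=1$ is impossible; if $d=2$ then $h=n$ and $k=m$, so the array is totally filled and is already produced by Theorem~\ref{rectangular} (which needs only $v-t\ge 15$). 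So I may assume $d\ge 3$. Transposing the array if necessary — which exchanges $(m,n;h,k)$ with $(n,m;k,h)$ and preserves both $r$ and the defining property — I may assume $\bar h\le\bar k$; the subcase $\bar h=\bar k$ forces $m=n$, $h=k=2$, and is exactly Proposition~\ref{kh2}. Hence from now on $\bar h<\bar k$.

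\emph{Case $\bar h\ge 2$ (so $\bar k\ge 3$).} Take $B$ to be the ``double $(\bar k,\bar h)$-staircase''
$$B:=\bigcup_{i=0}^{d-1}\bigl(Q+i(\bar k,\bar h)\bigr)\ \cup\ \bigcup_{i=0}^{d-1}\bigl(Q+(0,\bar h)+i(\bar k,\bar h)\bigr),\qquad Q=\{(a,b):1\le a\le\bar k,\ 1\le b\le\bar h\},$$
with row indices read modulo $m$ and column indices modulo $n$; a direct check shows $B$ has $h$ filled cells per row and $k$ per column. The $d$ bands of $\bar k$ consecutive rows partition the rows and are pairwise disjoint, and inside band $i$ the filled cells form a $\bar k\times 2\bar h$ block which is (cyclically) contiguous along the columns; so it suffices to tessellate a $\bar k\times 2\bar h$ rectangle by nice tiles. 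As $\bar k\ge 3$ and $2\bar h\ge 4$, I partition its rows into groups of $2$ and $3$ and, inside each group, its $2\bar h$ columns into groups of sizes in $\{4,5,6,7\}$ (for a $2$-row group) or $\{3,4,5\}$ (for a $3$-row group): by Propositions~\ref{3b} and~\ref{2b} these are nice tiles of size at most $15$, and Theorem~\ref{tiling} gives the array whenever $v-t\ge 15$.

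\emph{Case $\bar h=1$ (that is $h=2$, $n=d\ge 3$, $k=2q$ with $q=\bar k\ge 2$).} Now the array is $nq\times n$ with two filled cells per row, and $B$ has to be chosen so as to decompose into double-stair bricks of bounded size. Writing $q=2a+3c$ with $a,c\ge 0$ (possible since $q\ge 2$), I build $B$ as a disjoint union of $a$ ``layers'', each made of double $(2,1)$-stairs laid once around the $n$ columns with overlapping short ends, together with $c$ analogous layers of double $(3,1)$-stairs; then $B$ has two cells per row and $4a+6c=k$ per column, and each layer is cut along column boundaries into non-cyclic double $(2,1)$-stairs of lengths in $\{3,4,5\}$ (resp.\ double $(3,1)$-stairs of lengths in $\{2,3\}$), which are nice by the relevant propositions and of size at most $20$. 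Thus Theorem~\ref{tiling} applies for $v-t\ge 20$; the few small column counts $n\in\{3,4,5\}$, where a $(2,1)$-layer cannot be split into two or more linear stairs, are dealt with separately (e.g.\ by forcing only $(3,1)$-layers, or by handling those configurations directly). Combining the two cases, $v-t\ge 20$ suffices.

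\emph{Main obstacle.} The genuinely delicate case is $\bar h=1$, i.e.\ $h=2$. When $\bar h\ge 2$ the skeleton splits, band by band, into ordinary rectangles and the machinery of Theorem~\ref{rectangular} transfers verbatim; but with only two filled cells per row the only usable bricks are the double-stair tiles, and the obvious ``thick cylinder'' skeleton (each column's $k$ cells sitting in two consecutive super-rows) violates the column-contiguity clause of Definition~\ref{nice} the moment one tries to cut it up, while the uncut cylinder is generally too large to be a single tile — and is not even known to be nice. Spreading each column's cells over several complete once-around layers of short linear double-stairs circumvents this, and I expect closing the bookkeeping for \emph{every} admissible $(m,n)$, in particular for the smallest values of $n$, to be the heaviest part of the argument and the place where one most needs the explicit inventory of nice tiles from the preceding propositions (possibly supplemented by one or two further bounded stair types).
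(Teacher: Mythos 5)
Your reduction (ruling out $d\le 2$, transposing so that $\bar h\le\bar k$, dispatching $\bar h=\bar k$ via Proposition \ref{kh2}) and your treatment of the case $\bar h\ge 2$ coincide with the paper's argument: there too the skeleton is the double staircase, each double block $Q\cup(Q+(0,h/2))$ is a totally filled $(k/2)\times h$ rectangle with $k/2\ge 3$ and $h\ge 4$, and it is tessellated by the tiles of Propositions \ref{3b} and \ref{2b} exactly as in Theorem \ref{rectangular}. The divergence, and the genuine gap, is in the case $h=2$.

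There you replace the natural skeleton by a stack of $a$ cyclic $(2,1)$-layers and $c$ cyclic $(3,1)$-layers with $k/2=2a+3c$, each layer to be cut into linear double stairs. This fails exactly where you say it does, and the failure is not a boundary nuisance: for $n=3$ neither type of layer can be cut into two or more linear stairs of admissible length, and for $n\in\{4,5\}$ only $(3,1)$-layers can, which forces $3\mid(k/2)$. Since the necessary conditions allow $n\in\{3,4,5\}$ with arbitrary even $k$ while still having $v-t=\frac{2nk}{\lambda}\ge 20$, this leaves infinitely many admissible triples $(m,n;2,k)$ open, and ``handling those configurations directly'' is precisely the content that is missing. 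The paper closes them by splitting on $k$ rather than on $n$: for $k\ge 8$ it keeps the natural skeleton and observes that each double block $Q\cup(Q+(0,1))$ is a totally filled $(k/2)\times 2$ rectangle, i.e.\ (up to transposition) a strip of two lines of length $k/2\ge 4$, which is tessellated by Proposition \ref{2b} tiles as in Theorem \ref{rectangular} --- this works for every $n\ge 3$ with no bookkeeping. Only $k\in\{4,6\}$ then require the double-stair cutting, with lengths in $\{3,4,5\}$ (resp.\ $\{2,3\}$) summing to $n$; your ``uncut cylinder'' worry survives only in the residual instances $k\in\{4,6\}$, $n\le 5$, where the single wrapped-around layer must itself be checked nice by the same expected-value computation. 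Adopting the $(k/2)\times 2$ rectangle observation closes your argument with the same bound $v-t\ge 20$, which comes from the double $(2,1)$-stair of length $5$.
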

\proof
In this case we have that $nk=mh=2\lcm(m,n)$. It follows that
$$\frac{k}{2}= \frac{\lcm(m,n)}{n}$$
and
$$\frac{h}{2}= \frac{\lcm(m,n)}{m}.$$
We name by $Q$ the set of cells given by
$$Q:=\{(i,j):\ 1\leq i\leq k/2; \ 1\leq j\leq h/2\}.$$
Then, since
$$\frac{2n}{h}=\frac{2m}{k}=\frac{mn}{\lcm(m,n)}=\gcd(m,n),$$
we can consider the subset of the $m\times n$ array defined by:
$$B:=\left(\bigcup_{i=0}^{2n/h-1} Q+i(k/2,h/2)\right)\bigcup \left(\bigcup_{i=0}^{2n/h-1} Q+(0,h/2)+i(k/2,h/2)\right).$$
Note that $k=h$ would imply that $m=n=\lcm(m,n)$ and hence, since, $r=2$ it would be possible only if $k=h=2$. In this case, the thesis follows from Proposition \ref{kh2}.

We can suppose now that $h<k$.

CASE $h=2$ and $k=4$: In this case $m$ is at least $6$ and $B$ can be tessellated with double $(2,1)$-stair of length either three or four or five.
Since those tiles are nice and their sizes are at most $20$, they define a $^\lambda \N\mathrm{H}_t(m,n;h,k)$ over $G$ relative to $J$.

CASE $h=2$ and $k=6$: In this case, $m$ is at least $9$ and $B$ can be tessellated with double $(3,1)$-stair of length either two or three.
Since those tiles are nice and their sizes are at most $18$, they define a $^\lambda \N\mathrm{H}_t(m,n;h,k)$ over $G$ relative to $J$.

CASE $h=2$ and $k\geq 8$: In this case $Q\cup (Q+(0,h/2))$ is a $(k/2) \times 2$ rectangle and, since $k/2\geq 4$, can be tessellated with tiles whose sizes are at most $15$ as done in Theorem \ref{rectangular}.  Hence the same can be done, by translation, with $B$. It follows that, also in this case, we obtain a $^\lambda \N\mathrm{H}_t(m,n;h,k)$.

Otherwise we have that $h\geq 4$ and $k>h$. In this case $Q\cup (Q+(0,h/2))$ is a $(k/2) \times h$ rectangle and, since $k/2\geq 3$ and $h\geq 4$, can be tessellated with tiles whose sizes are at most $15$,  again as done in the proof of Theorem \ref{rectangular}.  Hence the same can be done, by translation, with $B$.

It follows that, assuming $r=2$, there exists a $^\lambda \N\mathrm{H}_t(m,n;h,k)$ over $G$ relative to $J$ whenever the necessary conditions are satisfied and $v-t\geq 20$.
\endproof

\subsection{The general case ($r\geq 3$)}
In this case, we consider the following kind of tiles.
\begin{defi}
Let $m,n$ be two positive integer such that $n\geq m$.
Identified the $m\times n$ array with the elements of $\Z_m\times \Z_n$, and given $a < n, b\leq m$, we say that a set $T$ of cells is an $(a,b)$-diagonal tile if its non-empty cells are exactly the following ones:
$$\{(\bar{i},\bar{j})+(x,x),(\bar{i},\bar{j}+1)+(x,x),\dots, (\bar{i},\bar{j}+(a-1))+(x,x): x\in [0,b-1]\}$$
for some $(\bar{i},\bar{j})\in \Z_m\times \Z_n.$
\end{defi}
\begin{prop}
A $(3,b)$-diagonal tile $T$ is nice whenever $b \geq 4$ and either $b \leq n - 2$ or $b = m$.
\end{prop}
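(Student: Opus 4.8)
The plan is to repeat, almost verbatim, the expected‑value argument used for the tiles of Propositions~\ref{3b} and~\ref{2b} and for the stair tiles. First I would record the shape of a $(3,b)$‑diagonal tile $T$: it has $|T|=3b$ cells, it is non‑empty exactly in the $b$ consecutive rows $R_{\bar i},R_{\bar i+1},\dots,R_{\bar i+b-1}$ (indices mod $m$), and the $x$‑th of these rows meets $T$ in the three consecutive cells $(\bar i+x,\bar j+x),(\bar i+x,\bar j+x+1),(\bar i+x,\bar j+x+2)$; this is property $(c_1)$ of Definition~\ref{nice}, and properties $(a)$ and $(b)$ are built in. The column side needs a short case analysis. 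If $b\le n-2$ the $b+2$ columns $C_{\bar j},\dots,C_{\bar j+b+1}$ are distinct and meet $T$ in $1,2,3,3,\dots,3,2,1$ cells respectively (two columns of size $1$, two of size $2$, and $b-2$ of size $3$, totalling $3b$), and in each column the occupied cells form a block of consecutive rows, which is property $(c_2)$. If $b=m$ the staircase wraps vertically; when moreover $m\le n-2$ nothing changes, when $m=n$ every column meets $T$ in exactly $3$ consecutive cells, and when $m=n-1$ there are three columns of size $2$ and $m-2$ of size $3$, so $(c_2)$ still holds and the column profile is, if anything, more favourable than in the first case.

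Next, as in the proof of Theorem~\ref{th:ExistenceLambdaCyclic} and Proposition~\ref{3b}, I would fix a group $G$, a set $S$ with $|S|=3b$, and vectors $(r_1,\dots,r_m)$ and $(c_1,\dots,c_n)$, and choose an array $A$ with $skel(A)=T$ and $\E(A)=S$ uniformly at random. For a non‑empty row, once its first two entries are fixed there is at most one value of the remaining entry producing the sum $r_i$, so $\mathbb{P}(X_i)\le\frac{1}{3b-2}$; since there are $b$ non‑empty rows, $\mathbb{E}(X)\le\frac{b}{3b-2}$. For a non‑empty column with $\ell$ cells, $\mathbb{P}(Y_j)\le\frac{1}{3b-\ell+1}$, so summing over the worst‑case column profile (the case $b\le n-2$) gives
$$\mathbb{E}(Y)\le\frac{2}{3b}+\frac{2}{3b-1}+\frac{b-2}{3b-2}.$$

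Finally I would verify that $\mathbb{E}(X)+\mathbb{E}(Y)<1$: for every $b\ge 4$,
$$\mathbb{E}(X)+\mathbb{E}(Y)\le\frac{2b-2}{3b-2}+\frac{2}{3b-1}+\frac{2}{3b}<\frac{2b-2}{3b-2}+\frac{4}{3b-2}=\frac{2b+2}{3b-2}\le 1,$$
where the middle inequality is strict because $3b-1,3b>3b-2$, and the last step is just $2b+2\le 3b-2$. By linearity of expectation there is then a choice of $A$ in which no non‑empty row sums to $r_i$ and no non‑empty column sums to $c_j$, i.e.\ conditions $(d_1)$ and $(d_2)$ of Definition~\ref{nice} hold, so $T$ is nice. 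I expect the only delicate point to be the column‑profile bookkeeping in the wrap‑around regime $b=m$, where one must treat $n=m$ and $n=m+1$ separately and check that each column still consists of consecutive cells; the probabilistic estimate itself is a routine repetition of the computations already carried out for the $(3,b)$ and stair tiles.
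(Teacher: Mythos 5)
Your proposal is correct and follows essentially the same route as the paper: the same uniform random filling of $T$ with the elements of $S$, the same bounds $\mathbb{P}(X_i)\le\frac{1}{3b-2}$ and $\mathbb{P}(Y_j)\le\frac{1}{3b-|C_j|+1}$, and the same conclusion via linearity of expectation. The only (harmless) difference is that you reduce the three column-profile cases to the single worst one $b\le n-2$ by noting that merging columns under wrap-around only decreases the bound, whereas the paper checks the cases $b\le n-2$, $b=n-1$, $b=n$ separately; your count of the column profile for $b=m=n-1$ (three columns of size $2$ and $b-2$ of size $3$) is in fact the accurate one.
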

\proof
First of all we note that the property $(c_1)$ of Definition \ref{nice} is always satisfied while property $(c_2)$ holds since either $b \leq n - 2$ or $b = m$.

Let us consider a set $S$ of elements of a group $G$ and vectors $(r_1,\dots, r_m)\in G^m$, $(c_1,\dots, c_n)\in G^n$. 
Since $T$ is a $(3,b)$-diagonal tile,  it has exactly $b$ rows each with $3$ non-empty cells.  We consider a non-empty row $R_i$ and we denote by $(i,\beta),(i,\beta+1),(i,\beta+2)$ its non-empty cells where the sum is considered modulo $n$.  Now we choose, uniformly at random, an array $A$ such that $skel(A)=T$ and $\E(A)=S$ and we denote by $\mathbb{P}(X_i)$ the probability of the event $X_i$ that the $i$-th row sums, following the natural ordering, to $r_i$.
Here we have that, chosen the first $2$ elements $a_{i,\beta}, a_{i,\beta+1}$ of the $i$-th row there is at most one element $x\in S$ such that
$$a_{i,\beta}+ a_{i,\beta+1}+x=r_i.$$
It follows that
$$\mathbb{P}(X_i)\leq \frac{1}{|S|-2}=\frac{1}{3b-2}.$$
Now we denote by $\mathbb{E}(X)$ the expected value of the random variable $X$ given by the number of non-empty rows $R_i$ that sums to $r_i$.
Due to the linearity of the expected value, we have that:
$$\mathbb{E}(X)=\sum_{i:\ skel(R_i) \cap T \not=\emptyset}\mathbb{P}(X_{i})\leq \frac{b}{3b-2}.$$

Similarly, given a non-empty column $C_j$, we denote by $\mathbb{P}(Y_j)$ the probability of the event $Y_i$ that $C_j$ sums to $c_j$.
Proceeding as we did for the rows, denoted by $|C_j|$ the number of the elements of the $j$-th column, we have that
$$\mathbb{P}(Y_i)\leq \frac{1}{|S|-|C_j|+1}=\frac{1}{3b-|C_j|+1}.$$
To estimate the expected value $\mathbb{E}(Y)$ of the random variable $Y$ given by the number of non-empty columns $C_j$ that sums to $c_j$, we divide the discussion into three cases.

CASE 1: $b\leq n-2$.
Here we have that $T$ has exactly $b-2$ columns with $3$ non-empty cells, $2$ columns with $2$ non-empty cells and $2$ columns with one non-empty cell.
Due to the linearity of the expected value, we have that:
$$\mathbb{E}(Y)\leq \frac{b-2}{3b-2}+\frac{2}{3b}+\frac{2}{3b-1}.$$
It follows that
$$\mathbb{E}(X)+\mathbb{E}(Y)\leq\frac{b}{3b-2}+ \frac{b-2}{3b-2}+\frac{2}{3b}+\frac{2}{3b-1}<1.$$
CASE 2: $b=n-1$.
Here we have that $T$ has exactly $b-1$ columns with $3$ non-empty cells and $2$ columns with $2$ non-empty cells.
Due to the linearity of the expected value, we have that:
$$\mathbb{E}(Y)\leq \frac{b-1}{3b-2}+\frac{2}{3b-1}.$$
It follows that
$$\mathbb{E}(X)+\mathbb{E}(Y)\leq\frac{b}{3b-2}+ \frac{b-1}{3b-2}+\frac{2}{3b-1}<1.$$
CASE 3: $b=n$.
Here we have that $T$ has exactly $b$ columns with $3$ non-empty cells.
Due to the linearity of the expected value, we have that:
$$\mathbb{E}(Y)\leq \frac{b}{3b-2}.$$
It follows that
$$\mathbb{E}(X)+\mathbb{E}(Y)\leq\frac{2b}{3b-2}<1.$$
\endproof
\begin{ex}
Here we provide an example of a $(3,b)$-diagonal tile $T$ of an $m \times n$ array with $n \geq m = b = n-2$.
\begin{center}
$\begin{array}{|r|r|r|r|r|r|r|r|r|}
\hline \bullet & \bullet & \bullet & \:\: & \:\: & \:\: & \:\: & \:\: & \:\: \\
\hline \:\: & \bullet & \bullet & \bullet & \:\: & \:\: & \:\: & \:\: & \:\: \\
\hline \:\: & \:\: & \bullet & \bullet & \bullet & \:\: & \:\: & \:\: & \:\: \\
\hline \:\: & \:\: & \:\: & \bullet & \bullet & \bullet & \:\: & \:\: & \:\: \\
\hline \:\: & \:\: & \:\: & \:\: & \ddots & \ddots & \ddots & \:\: & \:\: \\
\hline \:\: & \:\: & \:\: & \:\: & \:\: & \bullet & \bullet & \bullet & \:\: \\
\hline \:\: & \:\: & \:\: & \:\: & \:\: & \:\: & \bullet & \bullet & \bullet \\
\hline
\end{array}$
\end{center}
\end{ex}
With essentially the same proof we also have that:
\begin{prop}
A $(4,b)$-diagonal tile $T$ is nice whenever $b\geq 3$ and either $b \leq n-3$ or $b=m$.
\end{prop}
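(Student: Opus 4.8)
The plan is to repeat, word for word with $3$ replaced by $4$, the proof of the analogous statement for $(3,b)$-diagonal tiles. First I would verify the structural conditions of Definition~\ref{nice}. Condition $(c_1)$ is automatic, since by construction the non-empty cells of each row of a $(4,b)$-diagonal tile form a cyclic interval of length (at most) $4$. Condition $(c_2)$ holds exactly because of the hypothesis: if $b\le n-3$ the $b+3$ non-empty columns do not wrap around horizontally, so each is a cyclic interval; if instead $b=m$, then for every non-empty column the rows meeting it are at most four consecutive indices modulo $m$, again a cyclic interval.

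Next comes the local probabilistic argument, run exactly as for $(3,b)$-diagonal tiles. Fix a group $G$, a subset $S$ of $G$ with $|S|=|T|=4b$, and vectors $(r_1,\dots,r_m)\in G^m$ and $(c_1,\dots,c_n)\in G^n$; choose uniformly at random an array $A$ with skeleton $T$ and $\E(A)=S$. Since each non-empty row has exactly $4$ cells, once its first three entries are revealed at most one element of $S$ makes its sum equal to $r_i$, so $\mathbb{P}(X_i)\le\tfrac1{4b-3}$; as there are exactly $b$ non-empty rows this gives $\mathbb{E}(X)\le\tfrac{b}{4b-3}$. For a non-empty column $C_j$ the same reasoning gives $\mathbb{P}(Y_j)\le\tfrac1{4b-|C_j|+1}$, and one notes that every column of a $(4,b)$-diagonal tile has at most $4$ cells (for a fixed column the four diagonal offsets $0,1,2,3$ each contribute at most one cell).

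It then remains to bound $\mathbb{E}(Y)=\sum_j\mathbb{P}(Y_j)$ and check $\mathbb{E}(X)+\mathbb{E}(Y)<1$, which I would do by the same case split as before. When $b=m$ one has $n\le m+2=b+2$, hence at most $b+2$ non-empty columns, and even the crude bound $\mathbb{P}(Y_j)\le\tfrac1{4b-3}$ yields $\mathbb{E}(X)+\mathbb{E}(Y)\le\tfrac{2b+2}{4b-3}<1$ for all $b\ge3$. When $b\le n-3$ there are $b+3$ non-empty columns, with cell-counts $1,2,3,\underbrace{4,\dots,4}_{b-3},3,2,1$; for $b\ge4$ even the crude bound $\tfrac{2b+3}{4b-3}<1$ suffices, while for $b=3$ (where the block of $4$-cell columns disappears) one plugs the exact counts $1,2,3,3,2,1$ into $\mathbb{E}(Y)\le 2\bigl(\tfrac1{4b}+\tfrac1{4b-1}+\tfrac1{4b-2}\bigr)$ and verifies $\mathbb{E}(X)+\mathbb{E}(Y)<1$ directly; this is precisely why $b\ge3$, rather than $b\ge4$, is enough. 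Once $\mathbb{E}(X)+\mathbb{E}(Y)<1$ holds there is a filling of $T$ by $S$ meeting conditions $(d_1)$ and $(d_2)$ of Definition~\ref{nice}, so $T$ is nice. I expect the only genuine obstacle to be the bookkeeping of column cell-counts in the wrap-around case $b=m$ together with the check at the boundary value $b=3$; the rest is a routine copy of the $(3,b)$-diagonal argument.
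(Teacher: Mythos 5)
Your proposal is correct and is exactly the argument the paper intends: the paper states this proposition without proof, remarking only that it follows ``with essentially the same proof'' as the $(3,b)$-diagonal case, and your adaptation (row bound $\mathbb{E}(X)\le \tfrac{b}{4b-3}$, column cell-counts $1,2,3,4,\dots,4,3,2,1$ when $b\le n-3$, at most four cells per column and at most $n$ columns in the wrap-around case, plus the separate check at $b=3$) is that proof, with the expected-value inequalities verified correctly. The only cosmetic point is that your claim ``$b=m$ implies $n\le m+2$'' holds only in the residual case where $b\le n-3$ fails, which is the only place you use it, so nothing is lost.
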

\begin{ex}
Here we provide an example of a $(4,b)$-diagonal tile $T$ of an $m \times n$ array with $n \geq m = b = n-2$.
\begin{center}
$\begin{array}{|r|r|r|r|r|r|r|r|r|r|r|}
\hline \bullet & \bullet & \bullet & \bullet & \:\: & \:\: & \:\: & \:\: & \:\:  & \:\: \\
\hline \:\: & \bullet & \bullet & \bullet & \bullet& \:\: & \:\: & \:\: & \:\: & \\
\hline \:\: & \:\: & \bullet & \bullet & \bullet & \bullet & \:\: & \:\: & \:\: & \\
\hline \:\: & \:\: & \:\: & \bullet & \bullet & \bullet & \bullet & \:\: & \:\: & \\
\hline \:\: & \:\: & \:\: & \:\: & \ddots & \ddots & \ddots & \ddots & \:\: & \\
\hline \:\: & \:\: & \:\: & \:\: & \:\: & \bullet & \bullet & \bullet & \bullet  & \\
\hline \:\: & \:\: & \:\: & \:\: & \:\: & \:\: & \bullet & \bullet & \bullet & \bullet \\
\hline \bullet & \:\: & \:\: & \:\: & \:\: & \:\: & \:\: & \bullet & \bullet  & \bullet \\
\hline
\end{array}$
\end{center}
\end{ex}
\begin{prop}
A $(5,b)$-diagonal tile $T$ is nice whenever $b\geq 2$ and either $b \leq n-4$ or $b=m$.
\end{prop}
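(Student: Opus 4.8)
The plan is to mirror the proofs of the $(3,b)$- and $(4,b)$-diagonal tile propositions, i.e. to run the first-moment argument already used in Proposition \ref{3b}. First I would dispose of the structural requirements of Definition \ref{nice}: property $(c_1)$ holds automatically, since in each non-empty row the cells of $T$ are $5$ consecutive cells by construction; and property $(c_2)$ holds because of the hypothesis, exactly as in the earlier diagonal-tile propositions — if $b\leq n-4$ then the column indices occurring in $T$ are the $b+4$ consecutive values $\bar j,\bar j+1,\dots,\bar j+b+3$, so no column wraps around, while if $b=m$ the filled cells of any non-empty column lie in consecutive rows modulo $m$.

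Next, fixing a set $S$ with $|S|=|T|=5b$ and vectors $(r_1,\dots,r_m)\in G^m$, $(c_1,\dots,c_n)\in G^n$, I would pick an array $A$ with $skel(A)=T$ and $\E(A)=S$ uniformly at random and bound, as in Proposition \ref{3b}, the expected number $\mathbb{E}(X)$ of rows summing to the forbidden value $r_i$ and the expected number $\mathbb{E}(Y)$ of columns summing to the forbidden value $c_j$. Since each non-empty row has $5$ cells, once its first four entries are fixed at most one element of $S$ completes the sum to $r_i$, whence $\mathbb{P}(X_i)\leq\frac1{5b-4}$ and, by linearity, $\mathbb{E}(X)\leq\frac{b}{5b-4}$; for a non-empty column $C_j$ with $|C_j|$ cells the same reasoning gives $\mathbb{P}(Y_j)\leq\frac1{5b-|C_j|+1}$. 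The only place the argument branches is the estimate of $\mathbb{E}(Y)$, which depends on the column cell-count profile of $T$, and I would split into two cases.

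If $b\leq n-4$, a direct count shows $T$ meets exactly $b+4$ columns, with profile $b-4$ columns of size $5$ together with two columns each of sizes $4,3,2,1$ when $b\geq 5$, and profiles $(4\times 2,\,2\times 1)$, $(3\times 3,\,2\times 2,\,2\times 1)$, $(2\times 4,\,2\times 3,\,2\times 2,\,2\times 1)$ when $b=2,3,4$ respectively; summing $\mathbb{E}(X)+\mathbb{E}(Y)$ gives, for $b\geq 5$, the clean bound $\frac{2b-4}{5b-4}+\frac2{5b-3}+\frac2{5b-2}+\frac2{5b-1}+\frac2{5b}<\frac{2b+4}{5b-4}<1$, and for $b=2,3,4$ the explicit values $\frac26+\frac49+\frac2{10}=\frac{44}{45}$, $\frac3{11}+\frac3{13}+\frac2{14}+\frac2{15}$ and $\frac4{16}+\frac2{17}+\frac2{18}+\frac2{19}+\frac2{20}$, all $<1$. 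If instead $b>n-4$, then by hypothesis $b=m$, so $b=m\in\{n-3,n-2,n-1,n\}$; since $m\leq n$, for each of the five diagonal offsets there is at most one row in which a given column is met, hence $|C_j|\leq 5$ for every $j$, so $\mathbb{P}(Y_j)\leq\frac1{5b-4}$ and, there being at most $n$ non-empty columns, $\mathbb{E}(X)+\mathbb{E}(Y)\leq\frac{b+n}{5b-4}<1$ whenever $n<4b-4=4m-4$ — and this inequality holds in all four sub-cases because $a=5<n$ forces $n\geq 6$. Since $\mathbb{E}(X)+\mathbb{E}(Y)<1$ in every case, the probabilistic method produces an array $A$ with $skel(A)=T$ and $\E(A)=S$ satisfying $(d_1)$ and $(d_2)$, and hence $T$ is nice.

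The work here is essentially bookkeeping; the only delicate point is enumerating the per-column cell counts, in particular the wraparound that occurs when $b=m$ is close to $n$, and this is what the crude ``at most $5$ cells per column and at most $n$ non-empty columns'' estimate circumvents, at the cost of checking $n<4m-4$ in the four boundary sub-cases. The only genuinely tight inequality is the one at $b=2$, where one is forced to use the exact profile: replacing it by the cruder per-column bounds already yields exactly $1$ rather than something strictly below it.
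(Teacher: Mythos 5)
Your proof is correct and follows essentially the same route the paper intends for this proposition, namely the first-moment argument of the $(3,b)$-diagonal tile case (row bound $\mathbb{E}(X)\leq \frac{b}{5b-4}$, column bounds from the cell-count profile, and the structural conditions $(c_1)$, $(c_2)$ from $b\leq n-4$ or $b=m$); your profile counts and the explicit checks at $b=2,3,4$ are accurate, with $b=2$ indeed the tight case. The only (harmless) deviation is that you replace the paper's case-by-case profile enumeration in the wraparound regime $b=m>n-4$ by the cruder uniform estimate $\frac{b+n}{5b-4}<1$, which is a legitimate shortcut since $n\geq 6$ forces $n<4m-4$ in all four sub-cases.
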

\begin{ex}
Here we provide an example of a $(5,b)$-diagonal tile $T$ of an $m \times n$ array with $n \geq m = b = n-1$.
\begin{center}
$\begin{array}{|r|r|r|r|r|r|r|r|r|r|r|}
\hline \bullet & \bullet & \bullet & \bullet & \bullet & \:\: & \:\: & \:\: & \:\: \\
\hline \:\: & \bullet & \bullet & \bullet & \bullet& \bullet & \:\: & \:\: & \:\: \\
\hline \:\: & \:\: & \bullet & \bullet & \bullet & \bullet & \bullet & \:\: & \:\: \\
\hline \:\: & \:\: & \:\: & \ddots & \ddots & \ddots & \ddots & \ddots & \:\: \\
\hline \:\: & \:\: & \:\: & \:\: & \bullet & \bullet & \bullet & \bullet & \bullet \\
\hline \bullet & \:\: & \:\: & \:\: & \:\: & \bullet & \bullet & \bullet & \bullet \\
\hline \bullet & \bullet & \:\: & \:\: & \:\: & \:\: & \bullet & \bullet & \bullet \\
\hline \bullet & \bullet & \bullet & \:\: & \:\: & \:\: & \:\: & \bullet & \bullet \\
\hline
\end{array}$
\end{center}
\end{ex}
\begin{prop}\label{r3}
Let $G$ be a group of size $v$ and let $J$ be a subgroup of $G$ of size $t$. Then there exists a $^\lambda \N\mathrm{H}_t(m,n;h,k)$ over $G$ relative to $J$ assuming that $r=\frac{nk}{\lcm(m,n)}\geq 3$, the necessary conditions of Section \ref{necessarySection} are satisfied and $v-t\geq 21$.
\end{prop}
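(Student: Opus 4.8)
\proof
The plan is to mimic the proof of Theorem \ref{th:ExistenceLambdaV3}: pick a skeleton $B$ adapted to $r$, partition it into nice tiles of size at most $v-t$, and conclude with Theorem \ref{tiling}. Put $p:=k/r=\lcm(m,n)/n$ and $q:=h/r=\lcm(m,n)/m$, which are positive integers with $pr=k$ and $qr=h$; set $Q:=\{(i,j):1\le i\le p,\ 1\le j\le q\}$ and
$$B:=\bigcup_{j=0}^{r-1}\left(\bigcup_{i=0}^{\gcd(m,n)-1} Q+j(0,q)+i(p,q)\right),$$
with row and column indices read modulo $m$ and $n$. Exactly as in Theorem \ref{th:ExistenceLambdaV3}, $B$ has $h$ filled cells in each row and $k$ in each column. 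The first point is that for fixed $i$ the sets $Q+j(0,q)+i(p,q)$, $0\le j\le r-1$, fit together side by side into the $p\times h$ rectangle $\widehat Q+i(p,q)$, where $\widehat Q$ is the $p\times h$ rectangle at $(1,1)$, and that the $\gcd(m,n)$ rectangles $\widehat Q+i(p,q)$ occupy pairwise disjoint blocks of $p$ consecutive rows; hence $B$ is a disjoint union of $\gcd(m,n)$ translates of $\widehat Q$, and since translates of nice tiles are nice, it is enough to partition one $p\times h$ rectangle into nice tiles of size $\le v-t$. If $h=n$ then $k=m$, $B$ is the whole array, and we are done by Theorem \ref{rectangular} since $v-t\ge21>15$; so assume $h<n$ from now on.

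The easy main case is $p\ge2$ and $q\ge2$. Then $h=qr\ge6$, so $\widehat Q$ is a $p\times h$ rectangle with $p\ge2$ rows and at least $4$ columns, and I would partition it just as in the proof of Theorem \ref{rectangular}: split the $p$ rows into consecutive groups of two or three rows, fill every group of two rows with the $2$-row nice tiles of Proposition \ref{2b} having $b\in\{4,5,6,7\}$ columns, and every group of three rows with the $3$-row nice tiles of Proposition \ref{3b} having $b\in\{3,4,5\}$ columns. These tiles have size at most $15$; translating the partition to every copy of $\widehat Q$ and invoking Theorem \ref{tiling} disposes of this case.

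For the square case $m=n$ we have $k=h=r$ and $B$ is the width-$r$ diagonal band $\{(i,i+\ell):i\in\Z_m,\ 0\le\ell\le r-1\}$. If $m=n=3$, or if $r=m$, then $B$ is the full array and we cite Proposition \ref{3b} or Theorem \ref{rectangular}; otherwise $3\le r<m$ with $m\ge4$, and I would write $r=a_1+\dots+a_s$ with each $a_\ell\in\{3,4,5\}$ to split the band, along the diagonal, into bands of widths $a_1,\dots,a_s$, cutting each band of width $a$ in turn, again along the diagonal, into $(a,b)$-diagonal tiles: lengths $b\in\{4,5,6,7\}$ or $b=m$ for width $3$, $b\in\{3,4,5\}$ or $b=m$ for width $4$, $b\in\{2,3,4\}$ or $b=m$ for width $5$. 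These are nice by the preceding propositions on $(3,b)$-, $(4,b)$- and $(5,b)$-diagonal tiles, and a short check of the values $m=n\le7$ shows that the largest tile needed is a $(3,7)$-diagonal tile of size $21$, forced for instance when $m=n=7$ and $r\in\{3,6\}$; this is precisely why the threshold here is $v-t\ge21$ and not $15$. Theorem \ref{tiling} then gives the conclusion.

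The remaining case $p=1$ --- which, after transposing $A$ when needed (this swaps $m\leftrightarrow n$ and $h\leftrightarrow k$, interchanges $p$ and $q$, and leaves $r$ unchanged), covers every non-square configuration with $p=1$ or $q=1$ --- is the genuinely delicate one, and I expect it to be the main obstacle. Here $k=r$, $m\mid n$, $q=n/m\ge2$ and $r<m$, and $\widehat Q$ is a single row, useless as a tile; but grouping the $n$ columns into $m$ consecutive blocks of $q$ columns exhibits $B$ itself as a disjoint union of $m$ rectangles, each occupying $k=r$ consecutive rows and one block of $q$ columns, that is, $m$ disjoint copies of a $k\times q$ rectangle. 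When $k\ge4$, or $q\ge3$, I would partition this $k\times q$ rectangle into nice sub-rectangles of size $\le21$, decomposing $k$ --- and, if $q\ge3$, also $q$ --- into suitable parts (parts in $\{3,4,5\}$, or parts $\ge4$ matched against a width of $2$), the niceness of the resulting $a\times b$ rectangles following from the expected-value estimate of Propositions \ref{3b} and \ref{2b}. The one leftover, $k=r=3$ with $q=2$, makes each block a non-nice $3\times2$ rectangle; there I would instead merge two or three consecutive such blocks along $B$'s staircase into a single $2$-column ``staircase'' tile of size $12$ or $18$, again nice by the same estimate. In every instance all tiles have size $\le21\le v-t$, so Theorem \ref{tiling} applies. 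The heart of the proof is this last case: the base shape is so thin that only a skeleton designed by hand, rather than the canonical one, admits a partition into uniformly bounded nice tiles, and the work lies in checking that $v-t\ge21$ always suffices for it.
\endproof
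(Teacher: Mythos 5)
Your proposal is correct in outline but follows a genuinely different route from the paper. The paper does not reuse the block skeleton of Theorem \ref{th:ExistenceLambdaV3} at all: it first observes that the hypotheses force $n\geq m>r\geq 3$, then takes as skeleton the union $B=\bigcup_{i=0}^{r-1}(H+(0,i))$ of $r$ adjacent cosets of the subgroup $H=\langle(1,1)\rangle$ of $\Z_m\times\Z_n$ --- a diagonal band of width $r$ --- partitions the $r$ cosets into adjacent groups of weight $3$, $4$ or $5$, and tessellates each group with the $(3,b)$-, $(4,b)$- and $(5,b)$-diagonal tiles, exactly as you do in your square case but for arbitrary $m\leq n$. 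Since $m\geq r+1\geq 4$, admissible lengths $b$ always exist, and this single uniform construction yields the bound $21$ (attained by the $(3,7)$-diagonal tile) with no further case analysis. Your block-skeleton alternative buys a reduction to rectangles in the generic case $p,q\geq 2$, where the machinery of Theorem \ref{rectangular} applies verbatim with tiles of size at most $15$; the price is the thin cases $p=1$ or $q=1$, where you must invoke tiles the paper never certifies: the transposes of the tiles of Propositions \ref{2b} and \ref{3b} (harmless, since Definition \ref{nice} is symmetric under transposition, but this should be stated), and the merged two- or three-block ``staircase'' tiles for $k=r=3$, $q=2$, whose niceness you assert rather than prove. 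The expected-value computation does go through for those shapes (for the two-block merge one gets $\mathbb{E}(X)+\mathbb{E}(Y)\leq \tfrac{2}{11}+\tfrac{2}{9}+\tfrac{4}{10}<1$), so this is a fillable gap rather than a fatal one; still, as written the subcase you yourself identify as the heart of the argument rests on an unverified claim, and the paper's diagonal skeleton sidesteps the issue entirely.
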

\proof
We can assume, without loss of generality, that $n\geq m$ and that the array is not totally filled. Here we have that, necessarily,   $n\geq m> r \geq 3$. Indeed, otherwise, since the array is not totally filled, we would have that $k<3$ and hence
$$r=\frac{nk}{\lcm(m,n)}< \frac{3n}{\lcm(m,n)} < 3$$ contradicting the hypothesis that $r\geq 3$.

Assuming now that $n\geq m>3$, $nk=mh$ and $r=\frac{nk}{\lcm(m,n)}\geq 3$, we identify the cells of an $m\times n$ array with the elements of the group $\mathbb{Z}_m\times \Z_n$ and we consider the subgroup $H$ generated by $(1,1)$. As proved in Lemma 4.1 of \cite{CostaDellaFiorePasotti}, $H$ contains exactly $\frac{\lcm(m,n)}{m}$ filled cells in each row
and $\frac{\lcm(m,n)}{n}$ filled cells in each column. Moreover, since the array is not totally filled, $H,H+(0,1),\dots,H+(0,r-1)$ are disjoint cosets.
Now we set
$$B:=\bigcup_{i=0}^{r-1} \{H+(0,i)\}.$$
Then $B$ contains exactly
$$\frac{mh}{\lcm(m,n)}\frac{\lcm(m,n)}{m}=h$$
filled cells in each row and
$$\frac{nk}{\lcm(m,n)}\frac{\lcm(m,n)}{n}=k$$
filled cells in each column.

Since $r\geq 3$, we can partition the family of cosets $\mathcal{H}:=\{H,H+(0,1),\dots,H+(0,r-1)\}$ into subfamilies $\mathcal{H}_1,\dots, \mathcal{H}_{\ell}$ of adjacent ones each of which contains (i.e. it has weight) either $3$, $4$ or $5$ cosets.
This means that each of those families is of the following form
$\mathcal{H}_i= \{H+(0,j),H+(0,j+1),\dots,H+(0,j+w-1)\}$ where $w$ is the weight of $\mathcal{H}_i$ and $j\in [0,r-w]$.

Now, if $w=3$ and since $m\geq 4$,  the cells that belong to $\mathcal{H}_i$ can be tessellated with $(3,b)$-diagonal tiles each of which has $b\in \{4,5,6,7\}$ and $b=m$ when $m \in  \{4,5,6,7\}$ or $b \leq m-4 < n-2$ otherwise.
If instead $w=4$ we have that $m > r \geq 4$.  Then the cells that belong to $\mathcal{H}_i$ can be tessellated with $(4,b)$-diagonal tiles each of which has $b\in \{3,4,5\}$ and $b=m$ when $m=5$ or $b \leq m-3 \leq n-3$ otherwise.
Finally,  if  $w=5$ we have that $m > r \geq 5$.  Then the cells that belong to $\mathcal{H}_i$ can be tessellated with $(5,b)$-diagonal tiles each of which has $b\in \{2,3\}$ and $b \leq m-4 \leq n-4$.

Since all those tiles are nice and their sizes are at most $21$, we have that there exists a $^\lambda \N\mathrm{H}_t(m,n;h,k)$ over $G$ relative to $J$ assuming that $r=\frac{nk}{\lcm(m,n)}\geq 3$, the necessary conditions are satisfied and $v-t\geq 21$.
\endproof
From the result of this section, reasoning as in Corollary \ref{resultonv}, we obtain the main result of this paper.
\begin{thm}\label{thmonv}
Let $G$ be a group of size $v$ and let $J$ be a subgroup of $G$ of size $t$. Then there exists a $^\lambda \N\mathrm{H}_t(m,n;h,k)$ over $G$ relative to $J$ assuming that the necessary conditions of Section \ref{necessarySection} are satisfied and $v$ is an even integer $v\geq 42$ or $v$ is odd and such that $v\geq 29$.
\end{thm}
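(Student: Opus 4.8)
The plan is to obtain Theorem \ref{thmonv} as a bookkeeping consequence of the constructions built in the previous two sections, combined with one elementary estimate on $v-t$. Fix $G$, $J$ and parameters $m,n,h,k,\lambda$ satisfying the necessary conditions of Section \ref{necessarySection}; note that $J$ must be a proper subgroup, since otherwise condition (b) would force $nk=0$. First I would separate the degenerate shapes. If the array is totally filled, that is $h=n$ (equivalently $k=m$), I invoke Theorem \ref{rectangular}, which needs only $v-t\geq 15$. If $m=1$ or $n=1$, I invoke Proposition \ref{m1} (after transposing the array when $n=1$), which needs nothing beyond the necessary conditions. In the remaining case $m,n\geq 2$ and the array is not totally filled, so $r:=nk/\lcm(m,n)$ is a well-defined positive integer (since $\lcm(m,n)\mid mh=nk$), and I split on the value of $r$: if $r=1$ I apply the proposition treating the case $r=1$ (it suffices that $v-t\geq 18$), if $r=2$ the proposition treating the case $r=2$ ($v-t\geq 20$ suffices), and if $r\geq 3$ Proposition \ref{r3} ($v-t\geq 21$ suffices). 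Transposing the array exchanges $(m,h)$ with $(n,k)$, preserves the necessary conditions, preserves $r$, and preserves the property of being totally filled, so we may freely normalize the orientation to whatever each invoked statement requires.

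Hence in every branch the hypothesis $v-t\geq 21$ is sufficient, and it remains only to deduce it from the hypotheses on $v$, arguing as in Corollary \ref{resultonv}. Since $J$ is proper, $[G:J]=v/t$ is a divisor of $v$ that exceeds $1$, hence is at least the least prime divisor of $v$; therefore $t\leq v/2$ in general, and $t\leq v/3$ when $v$ is odd (as then $v/t$ is an odd integer greater than $1$, so at least $3$). If $v$ is even with $v\geq 42$, then $v-t\geq v-v/2=v/2\geq 21$. If $v$ is odd with $v\geq 29$, then either $v$ is prime, in which case $t=1$ and $v-t=v-1\geq 28\geq 21$, or $v$ is composite, in which case $v\geq 33$ (because $29$ and $31$ are prime), so $v-t\geq v-v/3=2v/3\geq 22\geq 21$. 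In all cases $v-t\geq 21$, which drives the branch chosen above and yields the desired $^\lambda\N\mathrm{H}_t(m,n;h,k)$ over $G$ relative to $J$.

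I do not expect a real obstacle here: the substance lies in the constructions of the preceding sections, and what remains is to verify that the case split (totally filled / a single row or column / $r=1,2,\geq 3$) is exhaustive and compatible with the normalizations used in the individual statements, which it is by the transposition observation above, and then to run the short divisibility computation. The one point worth a sentence of care is the origin of the numerical thresholds: the bound $v-t\geq 21$ comes from the $(3,b)$-diagonal tiles of Proposition \ref{r3}, and it is precisely this value that forces the even threshold up to $42$ (for $v=40$ one meets the bad configuration $t=20$, $v-t=20$) and the odd threshold up to $29$ (for $v=27$ one meets $t=9$, $v-t=18$), so these constants cannot be improved by the present method.
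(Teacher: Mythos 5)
Your proof is correct and follows essentially the same route as the paper, whose own argument is just the one-line observation that the case analysis of Section~6 (totally filled, $r=1$, $r=2$, $r\geq 3$) reduces everything to $v-t\geq 21$, combined with the bounds $t\leq v/2$ ($v$ even) and $t\leq v/3$ ($v$ odd) as in Corollary~\ref{resultonv}. In fact you supply a detail the paper leaves implicit: the crude bound $2v/3\geq 21$ would only give odd $v\geq 33$, and reaching the stated threshold $29$ requires treating the primes $29$ and $31$ separately (where $t=1$), exactly as you do.
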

\section{Application to Biembeddings}
In \cite{A}, Archdeacon introduced Heffter arrays also because
they are useful for finding biembeddings of cycle decompositions, as shown, for instance, in
\cite{CDY, Costa, CMPPHeffter, CPPBiembeddings, CPEJC, CostaPasotti2, DM}.
In this section, generalizing some of his results we show how starting from a $\lambda$-fold non-zero sum Heffter array
it is possible to obtain suitable biembeddings.
In this context, we equip a multigraph $\G$ with the following topology.
\begin{itemize}
\item If $\G$ is a simple graph, $\G$ is viewed with the usual topology as a $1$-dimensional simplicial complex.
\item If $\G$ is a multigraph, we consider the topology naturally induced on $\G$ by a simple graph $\G'$ that is a subdivision of $\G$.
\end{itemize}
Note that this topology is well defined because two multigraphs $\G$ and $\G'$ are homeomorphic if and only if there exists an isomorphism from some subdivision of $\G$ to some subdivision of $\G'$. Now we provide the following definition, see \cite{GT, Moh, MT} for the simple graph case.
\begin{defi}
An \emph{embedding} of a multigraph $\G$ in a surface $\Sigma$ is a continuous injective
mapping $\psi: \G \to \Sigma$, where $\G$ is viewed with the topology described above.
\end{defi}

The connected components of $\Sigma \setminus \psi(\G)$ are called $\psi$-\emph{faces}.
If each $\psi$-face is homeomorphic to an open disc, then the embedding $\psi$ is said to be \emph{cellular}.

\begin{defi}
An embedding $\psi$ of a multigraph $\G$ in a surface $\Sigma$ is said to be a \emph{biembedding} if it is face $2$-colourable.
\end{defi}

Given a $\lambda$-fold non-zero sum Heffter array $^\lambda\N\mathrm{H}_t(m,n; h,k)$, say $A$, and permutations $\omega_{r_1},\dots,\omega_{r_m}$ for the cells of each row, and $\omega_{c_1},\dots,\omega_{c_n}$ for the cells of each column, we define $\omega_r:=\omega_{r_1}\circ \omega_{r_2}\circ \dots\circ \omega_{r_m}$ and $\omega_c:=\omega_{c_1}\circ \omega_{c_2}\circ \dots\circ \omega_{c_n}$. Then we say that $\omega_r$ and $\omega_c$ are \emph{compatible} if $\omega_c \circ \omega_r$ is a cycle of length $|skel(A)|$.

Following the proof of Theorem 5.5 of \cite{CPEJC}, we obtain that:
\begin{thm}\label{thm:biembedding}
Let $A$ be a $\lambda$-fold relative Heffter array $^\lambda\N\mathrm{H}_t(m,n; h,k)$ that admits compatible orderings $\omega_r$ and $\omega_c$.
Then there exists a cellular biembedding of $^\lambda K_{(\frac{2nk}{\lambda t}+1)\times t}$ into an orientable surface whose face lengths are multiples of $k$ and $h$ strictly larger than $k$ and $h$ respectively.
\end{thm}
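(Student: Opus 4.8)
The plan is to reproduce, with a single modification, the argument used to prove Theorem~5.5 of \cite{CPEJC}; the only genuinely new point is how the \emph{non-zero} row and column sums affect the lengths of the faces.

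First I would set up the combinatorial map. Write $v=|G|$; by the necessary conditions of Section~\ref{necessarySection} we have $v=\frac{2nk}{\lambda}+t$, so $v/t=\frac{2nk}{\lambda t}+1$, and the cosets of $J$ will be the parts of $\Gamma:={}^{\lambda}K_{(v/t)\times t}$. I would realise $\Gamma$ as the Cayley multigraph on the vertex set $G$ that carries, for every $g\in G$ and every filled cell of $A$ with entry $a$, a directed edge $g\to g+a$: by condition $(b_2)$ of Definition~\ref{def:NZS} the multiset $\{\pm a\mid a\in A\}$ equals $(G\setminus J)^{\lambda}$, so the resulting undirected multiplicity of $\{x,y\}$ is $\lambda$ exactly when $x-y\notin J$ and $0$ otherwise; hence this multigraph is $\Gamma$, and its directed edges are in bijection with $skel(A)\times G$. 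Then, exactly as in \cite{CPEJC}, I would use the compatible cyclic orderings $\omega_{r_1},\dots,\omega_{r_m},\omega_{c_1},\dots,\omega_{c_n}$ to define a $G$-invariant rotation at each vertex, arranged so that one family of facial walks, upon entering a vertex along an edge coming from a cell of row $R_i$, continues along the edge of the next cell of $R_i$, while the other family does the same with the columns.

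Next I would check, word for word as in \cite{CPEJC}, that this rotation defines a cellular embedding of $\Gamma$ into an orientable surface (orientability is automatic, as only a rotation and no edge signature is prescribed) whose faces are precisely these ``row faces'' and ``column faces'', with every edge of $\Gamma$ lying on exactly one face of each kind; the partition into the two kinds is then the required face $2$-colouring, so the embedding is a biembedding. The hypothesis that $\omega_r$ and $\omega_c$ are \emph{compatible}, i.e.\ that $\omega_c\circ\omega_r$ is a single $nk$-cycle, is used precisely here: it is what forces the map to carry all of $\Gamma$ and the two families of facial walks to be a genuine partition of the edge-sides. This verification is the technical heart of the proof, and for me the main obstacle; but since it is insensitive to the \emph{values} of the row and column sums it is identical to the corresponding step in \cite{CPEJC}, so no new argument is needed.

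Finally I would compute the face lengths, which is where condition $(c_2)$ of Definition~\ref{def:NZS} enters. A row face that enters a vertex $g$ through the first cell of $R_i$ runs through the vertices $g,\,g+p_1,\,g+p_2,\dots$, the $p_\ell$ being the partial sums of the $h$ entries of $R_i$; after $h$ edges it is at $g+s_i$, where $s_i$ is the sum of $R_i$. Iterating, the facial walk returns to its initial edge-end exactly after $\mathrm{ord}(s_i)$ such blocks of $h$ edges — a coincidence of vertices on the way, even $g+p_\ell=g$ with $0<\ell<h$, does not close the walk, since the walk closes only when the same cell of $A$ and the same base vertex recur, which for the same reason also shows that no edge of $\Gamma$ is traversed twice, so the walk does bound a face. (In particular, in contrast with the zero-sum situation, one needs no ``simplicity'' of the orderings.) By $(c_2)$ we have $s_i\neq 0$, hence $\mathrm{ord}(s_i)\ge 2$, and the face has length $h\cdot\mathrm{ord}(s_i)$, a multiple of $h$ strictly larger than $h$. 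The identical computation for the columns gives each column face length $k\cdot\mathrm{ord}(s'_j)$ with $s'_j\neq 0$ the sum of $C_j$, a multiple of $k$ strictly larger than $k$. This yields the asserted cellular biembedding of ${}^{\lambda}K_{(\frac{2nk}{\lambda t}+1)\times t}$ into an orientable surface.
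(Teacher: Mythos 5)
Your proposal takes essentially the same route as the paper, whose entire proof is the single line ``Following the proof of Theorem 5.5 of \cite{CPEJC}'': namely the Archdeacon-style construction of a rotation on the Cayley multigraph of $G$ with connection multiset $\pm\E(A)=(G\setminus J)^{\lambda}$, with the compatible orderings providing the vertex rotations and the two families of row- and column-faces giving the $2$-colouring. Your expansion of the one genuinely new point --- that a row (resp.\ column) face closes only after $\mathrm{ord}(s_i)$ blocks of $h$ (resp.\ $k$) edges, so condition $(\rm{c_2})$ yields face lengths $h\cdot\mathrm{ord}(s_i)>h$ and $k\cdot\mathrm{ord}(s'_j)>k$ --- is correct and is exactly what the paper's citation is implicitly relying on.
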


As already remarked in \cite{CPPBiembeddings},
looking for compatible orderings led us to investigate the following problem
introduced in \cite{CDP}.
Let $A$ be an $m\times n$ \emph{toroidal} p.f. array. By $r_i$ we denote the orientation of the $i$-th row,
precisely $r_i=1$ if it is from left to right and $r_i=-1$ if it is from right to left. Analogously, for the $j$-th
column, if its orientation $c_j$ is from top to bottom then $c_j=1$ otherwise $c_j=-1$. Assume that an orientation
$\R=(r_1,\dots,r_m)$
and $\C=(c_1,\dots,c_n)$ is fixed. Given an initial filled cell $(i_1,j_1)$ consider the sequence
$ L_{\R,\C}(i_1,j_1)=((i_1,j_1),(i_2,j_2),\ldots,(i_\ell,j_\ell),$ $(i_{\ell+1},j_{\ell+1}),\ldots)$
where $j_{\ell+1}$ is the column index of the filled cell $(i_\ell,j_{\ell+1})$ of the row $R_{i_\ell}$ next
to
$(i_\ell,j_\ell)$ in the orientation $r_{i_\ell}$,
and where $i_{\ell+1}$ is the row index of the filled cell of the column $C_{j_{\ell+1}}$ next to
$(i_\ell,j_{\ell+1})$ in the orientation $c_{j_{\ell+1}}$.
The problem is the following:

\begin{KN}
Given a toroidal p.f. array $A$,
do there exist $\R$ and $\C$ such that the list $L_{\R,\C}$ covers all the filled
cells of $A$?
\end{KN}

By $P(A)$ we will denote the \probname\ for a given array $A$.
Also, given a filled cell $(i,j)$, if $L_{\R,\C}(i,j)$ covers all the filled positions of $A$ we will
say that $(\R,\C)$ is a solution of $P(A)$.
For known results about this problem see \cite{CDP}.
The relationship between the Crazy Knight's Tour Problem and $\lambda$-fold non-zero sum Heffter arrays is explained in the following result
which is an easy consequence of Theorem
\ref{thm:biembedding}.

\begin{cor}\label{preprecedente}
Let $A$ be a $^{\lambda}\N\mathrm{H}_t(m,n;h,k)$ such that $P(A)$ admits a solution $(\R,\C)$.
Then there exists a cellular biembedding of $^\lambda K_{(\frac{2nk}{\lambda t}+1)\times t}$ into an orientable surface whose face lengths are multiples of $k$ and $h$ strictly larger than $k$ and $h$ respectively.
\end{cor}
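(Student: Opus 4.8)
The plan is to reduce the statement directly to Theorem~\ref{thm:biembedding} by showing that a solution $(\R,\C)$ of $P(A)$ encodes a pair of compatible orderings $\omega_r$ and $\omega_c$ for $A$.

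First I would fix a solution $(\R,\C)$ with $\R=(r_1,\dots,r_m)$ and $\C=(c_1,\dots,c_n)$ and build the row and column permutations out of it. For each row index $i$, let $\omega_{r_i}$ be the permutation of $skel(A)$ that fixes every filled cell outside $R_i$ and, on the filled cells of $R_i$, cyclically sends each cell to the next filled cell of $R_i$ in the orientation $r_i$ (reading the row as a cyclic list, so to the right if $r_i=1$ and to the left if $r_i=-1$). Define $\omega_{c_j}$ analogously from $c_j$ on the filled cells of $C_j$, and set $\omega_r=\omega_{r_1}\circ\cdots\circ\omega_{r_m}$ and $\omega_c=\omega_{c_1}\circ\cdots\circ\omega_{c_n}$. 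Since the skeletons of distinct rows are disjoint (and likewise for columns), the factors $\omega_{r_i}$ commute, $\omega_r$ is well defined, and $\omega_r$ maps each filled cell $(i,j)$ to the filled cell of $R_i$ next to $(i,j)$ in orientation $r_i$; similarly $\omega_c$ maps $(i,j)$ to the filled cell of $C_j$ next to $(i,j)$ in orientation $c_j$.

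The key point, which I would then verify, is that $\omega_c\circ\omega_r$ is exactly the successor map generating the list $L_{\R,\C}$: applied to a filled cell $(i_\ell,j_\ell)$ it first steps along $R_{i_\ell}$ to $(i_\ell,j_{\ell+1})$ via $\omega_r$ and then along $C_{j_{\ell+1}}$ to $(i_{\ell+1},j_{\ell+1})$ via $\omega_c$, which is precisely how $L_{\R,\C}$ advances. Hence $L_{\R,\C}(i_1,j_1)$ covers all $|skel(A)|$ filled cells of $A$ if and only if $\omega_c\circ\omega_r$ is a single cycle of length $|skel(A)|$, i.e.\ if and only if $\omega_r$ and $\omega_c$ are compatible in the sense of the definition preceding Theorem~\ref{thm:biembedding}. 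Since $(\R,\C)$ is a solution of $P(A)$, the orderings $\omega_r,\omega_c$ are compatible, and applying Theorem~\ref{thm:biembedding} to $A$ gives the claimed cellular biembedding of $^\lambda K_{(\frac{2nk}{\lambda t}+1)\times t}$ into an orientable surface with face lengths multiples of $k$ and $h$ strictly larger than $k$ and $h$ respectively. I do not expect any genuine obstacle here; the only thing needing care is the bookkeeping that orientation reversal ($r_i=-1$ or $c_j=-1$) corresponds to taking the inverse of the relevant cyclic permutation, so that the maps built from $\R$ and $\C$ really are the row and column permutations appearing in the definition of compatibility.
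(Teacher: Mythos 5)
Your proposal is correct and matches the paper's intent: the paper presents Corollary~\ref{preprecedente} as an immediate consequence of Theorem~\ref{thm:biembedding}, the implicit step being exactly your observation that a solution $(\R,\C)$ of $P(A)$ yields compatible orderings $\omega_r,\omega_c$ because $\omega_c\circ\omega_r$ is the successor map of $L_{\R,\C}$ and is therefore a single cycle of length $|skel(A)|$. You have simply made explicit the bookkeeping the paper leaves to the reader.
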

To present an existence result about biembeddings, we need to introduce some notation.

Given an $n \times n$ p.f. array $A$, for $i\in \{1,\dots,n\}$ we define the $i$-th diagonal of $A$ as follows
$$D_i=\{(i,1),(i+1,2),\ldots,(i-1,n)\}.$$
Here all the arithmetic on the row and column indices is performed modulo $n$, where $\{1,2,\ldots,n\}$ is the set of reduced residues.
The diagonals $D_{i+1}, D_{i+2}, \ldots, D_{i+k}$ are called $k$ \emph{consecutive diagonals}.
\begin{defi}
Let $n,k$ be integers such that $n\geq k\geq 1$. An $n\times n$ p.f. array $A$ is said to be:
\begin{itemize}
\item[1)] \emph{$k$-diagonal} if the non-empty cells of $A$ are exactly those of $k$ diagonals,
\item[2)] \emph{cyclically $k$-diagonal} if the non-empty cells of $A$ are exactly those of $k$ consecutive diagonals.
\end{itemize}
\end{defi}
We recall the following results about solutions of $P(A)$.
\begin{prop}[\cite{CDP}]\label{ElencoResults}
Given a cyclically $k$-diagonal $^{\lambda}\N\mathrm{H}_t(n;k)$, $A$, there exists a solution $(\R,\C)$ of $P(A)$ in the following cases:
\begin{itemize}
\item[1)] if $nk$ is odd, $n\geq k\geq 3$, and $gcd(n,k-1)=1$,
\item[2)] if $nk$ is odd, $n\geq k$, and $3<k<200$,
\item[3)] if $nk$ is odd, $n\geq k\geq 3$, and $n\geq (k-2)(k-1)$.
\end{itemize}
In all these cases $A$ also admits a pair of compatible orderings.
\end{prop}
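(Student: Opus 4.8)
Since this statement is recalled from \cite{CDP}, I outline the strategy one would follow. The plan is to translate $P(A)$ into a purely permutation-theoretic condition. Fix orientations $\R=(r_1,\dots,r_n)$ and $\C=(c_1,\dots,c_n)$ and let $\rho_{\R}$ be the permutation of $skel(A)$ sending each filled cell to the next filled cell of its row in the orientation $r_i$, and $\sigma_{\C}$ the analogous permutation for the columns. A single step of the list $L_{\R,\C}$ is then the map $\sigma_{\C}\circ\rho_{\R}$, so $(\R,\C)$ is a solution of $P(A)$ if and only if $\sigma_{\C}\circ\rho_{\R}$ is a single cycle of length $|skel(A)|=nk$. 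In this language the closing assertion about compatible orderings is immediate: setting $\omega_r=\rho_{\R}$ and $\omega_c=\sigma_{\C}$ produces orderings with $\omega_c\circ\omega_r$ a single $nk$-cycle, which is exactly the required compatibility.

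Next I would exploit the cyclic structure. Since $A$ is cyclically $k$-diagonal we may assume, after a translation, that its nonempty cells are those of $D_1,\dots,D_k$, and index a filled cell by its column $b\in\Z_n$ together with its diagonal offset $d\in\{0,\dots,k-1\}$. On each row the $k$ filled cells occupy consecutive columns, so $\rho_{\R}$ acts as a cyclic shift of the offset, and likewise $\sigma_{\C}$ on each column; hence both $\rho_{\R}$ and $\sigma_{\C}$ are products of $n$ disjoint $k$-cycles, and $\sigma_{\C}\circ\rho_{\R}$ is a product of $2n$ cyclic $k$-cycles, hence an even permutation, \emph{regardless} of the orientations. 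Since a single $nk$-cycle is even precisely when $nk$ is odd, this explains and forces the standing hypothesis that $nk$ be odd. A direct computation for the all-$(+1)$ orientation shows that $\sigma_{\C}\circ\rho_{\R}$ then decomposes along diagonals into $k-1$ cycles of length $n$, coming from the offsets $d=1,\dots,k-1$, together with $\gcd(n,k-1)$ further cycles arising from the boundary diagonal $d=0$ (where the step acts as $b\mapsto b-(k-1)$); this yields $(k-1)+\gcd(n,k-1)$ cycles in total.

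The remaining task is to merge these base cycles into a single one by flipping the orientation of selected rows and columns, each flip replacing a $k$-cycle of $\rho_{\R}$ or $\sigma_{\C}$ by its inverse and thereby performing a controlled surgery on the cycle structure while preserving the (even) parity. For case~1, the condition $\gcd(n,k-1)=1$ makes the boundary diagonal contribute a single cycle, reducing the base decomposition to $k$ cycles in a predictable pattern that a systematic sequence of flips, tracked through the $\Z_n$-translation symmetry, can be shown to amalgamate into one. For case~3, the bound $n\ge(k-2)(k-1)$ furnishes enough room on each diagonal for the prescribed flips to act on disjoint stretches of cells, so that an explicit, $n$-uniform construction merges all cycles without interference. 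For case~2, where $k$ ranges over the finite set $3<k<200$, one reduces modulo the cyclic symmetry to finitely many orientation patterns per value of $k$ and verifies the single-cycle property by an algorithmic search.

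The main obstacle is this final merging step: showing that the chosen orientation flips can be carried out \emph{simultaneously}, so that the induced merge relation on the base cycles is connected and the outcome is a single $nk$-cycle rather than several. The parity computation isolates $nk$ odd as the clean necessary condition, but establishing sufficiency uniformly in $n$ for fixed $k$ (cases~1 and 3), and by finite verification on the intermediate range (case~2), is precisely where the hypotheses $\gcd(n,k-1)=1$ and $n\ge(k-2)(k-1)$ enter; the detailed constructions are carried out in \cite{CDP}.
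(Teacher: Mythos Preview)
The paper does not prove this proposition at all: it is stated with attribution to \cite{CDP} and left without proof, serving only to feed into the subsequent theorem on biembeddings. So there is no ``paper's own proof'' to compare against here; you correctly recognized this and offered instead a plausibility sketch of the argument in \cite{CDP}.

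Your sketch is broadly reasonable. The translation of $P(A)$ into the condition that $\sigma_{\C}\circ\rho_{\R}$ be a single $nk$-cycle is exactly the right framework, and the observation that this immediately yields compatible orderings (by taking $\omega_r=\rho_{\R}$, $\omega_c=\sigma_{\C}$) matches how the paper uses the result. The parity computation explaining why $nk$ odd is necessary is correct in substance, though your phrasing ``$\sigma_{\C}\circ\rho_{\R}$ is a product of $2n$ cyclic $k$-cycles'' is imprecise: the composition is a group-theoretic product of $2n$ (non-disjoint) $k$-cycles, which suffices for parity, but is not a disjoint-cycle decomposition. The description of the base cycle structure and the ``merge by flipping orientations'' strategy is the right intuition for how \cite{CDP} proceeds, and you are honest that the actual merging constructions (and the finite verification for $3<k<200$) are the substantive content you are not reproducing. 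As a proof this remains a sketch with the hard part deferred to the citation, which is appropriate given that the present paper does the same.
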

We note that all the $^{\lambda}\N\mathrm{H}_t(n;k)$ obtained in Theorem \ref{thmonv} are cyclically $k$-diagonal. Therefore, it is enough to consider the $^{\lambda}\N\mathrm{H}_t(n;k)$'s over cyclic groups $\mathbb{Z}_v$, to obtain that:
\begin{thm}
Let $v$ be an even integer larger than or equal to $42$ or an odd one larger than or equal to $29$, $t$ be a divisor of $v$ and let $n\geq k\geq 3$ be odd integers such that $v=\frac{2nk}{\lambda}+t$ and that satisfy one of the hypothesis of Proposition \ref{ElencoResults}.
Then there exists a cellular biembedding of $^\lambda K_{(\frac{2nk}{\lambda t}+1)\times t}$ into an orientable surface whose face lengths are multiples of $k$ strictly larger than $k$.
\end{thm}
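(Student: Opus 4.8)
The plan is to obtain the biembedding by chaining together the main existence result of this paper with the known solutions of the Crazy Knight's Tour Problem and the biembedding machinery of Theorem~\ref{thm:biembedding}. Since all the hard work has already been carried out in the previous sections, the argument reduces to checking that the hypotheses of each ingredient are met.

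First I would specialize Theorem~\ref{thmonv} to the cyclic group $G=\Z_v$ and its (unique, since $t\mid v$) subgroup $J$ of order $t$, choosing the square parameters $m=n$ and $h=k$. The necessary conditions of Section~\ref{necessarySection} are readily verified: $nk=mh$ is immediate from $m=n$ and $h=k$; the equality $v=\frac{2nk}{\lambda}+t$ is part of the hypothesis; $t\mid v$ is assumed; and condition (d) is automatic for cyclic groups, as recorded in Remark~\ref{necessary} (indeed, if $\Z_v\setminus J$ contained an involution then $v$ would be even and $t$ odd, so $v-t=\frac{2nk}{\lambda}$ would be odd, which forces $\lambda$ to be even because $2nk$ is even). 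As $v$ is an even integer $\geq 42$ or an odd integer $\geq 29$, Theorem~\ref{thmonv} furnishes a $^\lambda\N\mathrm{H}_t(n;k)$ over $\Z_v$ relative to $J$, say $A$; moreover, as noted just before the statement, every such array produced by Theorem~\ref{thmonv} is cyclically $k$-diagonal.

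Next I would invoke Proposition~\ref{ElencoResults}: since $A$ is a cyclically $k$-diagonal $^\lambda\N\mathrm{H}_t(n;k)$ with $n\geq k\geq 3$ odd and, by hypothesis, satisfying one of its three listed conditions, the problem $P(A)$ admits a solution $(\R,\C)$ and $A$ admits a pair of compatible orderings. Feeding $A$ together with $(\R,\C)$ into Corollary~\ref{preprecedente} (equivalently, feeding the compatible orderings into Theorem~\ref{thm:biembedding}) then yields a cellular biembedding of $^\lambda K_{(\frac{2nk}{\lambda t}+1)\times t}$ into an orientable surface whose face lengths are multiples of $k$ and of $h$, strictly larger than $k$ and $h$ respectively; since here $h=k$, this is exactly the asserted conclusion.

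The only point that deserves a little care — and the closest thing to an obstacle — is ensuring that the square array delivered by Theorem~\ref{thmonv} in the case $m=n$, $h=k$ really is cyclically $k$-diagonal, so that Proposition~\ref{ElencoResults} applies; this is precisely why the constructions behind Theorem~\ref{thmonv} (notably the diagonal tiles of Proposition~\ref{r3}) were arranged so as to produce arrays supported on $k$ consecutive diagonals. Everything else is a routine synthesis of the results already established.
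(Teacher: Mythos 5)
Your proposal is correct and follows exactly the route the paper takes: it combines Theorem~\ref{thmonv} (specialized to $\Z_v$ and square arrays, whose output is cyclically $k$-diagonal) with Proposition~\ref{ElencoResults} and Corollary~\ref{preprecedente} to produce the biembedding. The paper's own justification is essentially the one-sentence remark preceding the theorem, so your more detailed verification of the necessary conditions and of the cyclically $k$-diagonal structure is a faithful (indeed slightly more careful) rendering of the same argument.
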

\section*{Acknowledgements}
The authors would like to thank Anita Pasotti and Tommaso Traetta for our useful discussions on this topic.  The first author was partially supported by INdAM--GNSAGA.


\begin{thebibliography}{50}
\bibitem{AL} B. Alspach \and G. Liversidge,
\textit{On strongly sequenceable abelian groups},
Art Discrete Appl. Math. \textbf{3} (2020), \#P1.02.

\bibitem{A} D.S. Archdeacon,
\textit{Heffter arrays and biembedding graphs on surfaces},
Electron. J. Combin. \textbf{22} (2015) \#P1.74.

\bibitem{ABD} D.S. Archdeacon, T. Boothby \and J.H. Dinitz,
\textit{Tight Heffter arrays exist for all possible values},
J. Combin. Des. \textbf{25} (2017), 5--35.

\bibitem{ADDY} D.S. Archdeacon, J.H. Dinitz, D.M. Donovan \and E.S. Yaz\i c\i,
\textit{Square integer Heffter arrays with empty cells},
Des. Codes Cryptogr. \textbf{77} (2015), 409--426.

\bibitem{ADMS} D.S. Archdeacon, J.H. Dinitz, A. Mattern \and D.R. Stinson,
\textit{On partial sums in cyclic groups},
J. Combin. Math. Combin. Comput. \textbf{98} (2016), 327--342.

\bibitem{BH} J.P. Bode \and H. Harborth, \textit{Directed paths of diagonals within polytopes}, Discrete Math. \textbf{299} (2005), 3--10.

\bibitem{BCDY} K. Burrage, N.J. Cavenagh, D. Donovan \and E.\c{S}. Yaz\i c\i,
\textit{Globally simple Heffter arrays $H(n;k)$ when $k\equiv 0,3 \pmod{4}$},
Discrete Math. \textbf{343} (2020), 111787.

\bibitem{CDDY} N.J. Cavenagh, J. Dinitz, D. Donovan \and E.S. Yaz\i c\i,
\textit{The existence of square non-integer Heffter arrays},
Ars Math. Contemp. \textbf{17} (2019), 369--395.

\bibitem{CDY} N.J. Cavenagh, D. Donovan \and E.\c{S}. Yaz\i c\i,
\textit{Biembeddings of cycle systems using integer Heffter arrays},
J. Combin. Des. \textbf{28} (2020), 900--922.

\bibitem{Costa} S. Costa, \textit{Biembeddings of Archdeacon type: their full automorphism group and
their number}, preprint available at https://arxiv.org/abs/2205.02066.

\bibitem{CDP} S. Costa, M. Dalai \and A. Pasotti,
\textit{A tour problem on a toroidal board},
Austral. J. Combin., \textbf{76} (2020), 183--207.

\bibitem{CostaDellaFiorePasotti} S. Costa, S. Della Fiore \and A. Pasotti, \textit{Non-zero sum Heffter arrays and their applications}, Disc. Math. \textbf{345} (2022), 112952.

\bibitem{PM2} S. ~Costa, L. ~Mella and A. ~Pasotti, \textit{On weak Heffter arrays}, in preparation.

\bibitem{CMPPHeffter} S. Costa, F. Morini, A. Pasotti \and M.A. Pellegrini,
\textit{Globally simple Heffter arrays and orthogonal cyclic cycle decompositions},
Austral. J. Combin. \textbf{72} (2018), 549--593.

\bibitem{RelH} S. Costa, F. Morini, A. Pasotti \and M.A. Pellegrini,
\textit{A generalization of Heffter arrays},
J. Combin. Des. \textbf{28} (2020), 171--206.

\bibitem{CPPBiembeddings} S. Costa, A. Pasotti \and M.A. Pellegrini,
\textit{Relative Heffter arrays and biembeddings},
Ars Math. Contemp. \textbf{18} (2020), 241--271.

\bibitem{CPEJC} S. Costa \and A. Pasotti, \textit{On $\lambda$-fold relative Heffter arrays and biembedding multigraphs on surfaces},
Europ. J. Combin. \textbf{97} (2021), 103370.

\bibitem{CostaPasotti2} S. Costa \and A. Pasotti, \textit{On the number of non-isomorphic (simple) $k$-gonal biembeddings of complete multipartite graphs}, preprint available at https://arxiv.org/abs/2111.08323.

\bibitem{DM} J.H. Dinitz \and A.R.W. Mattern,
\textit{Biembedding Steiner triple systems and $n$-cycle systems on orientable surfaces},
Austral. J. Combin. \textbf{67} (2017), 327--344.

\bibitem{DP} J.H. Dinitz and A. Pasotti. \textit{A survey of Heffter arrays}, in preparation.
\bibitem{DW} J.H. Dinitz \and I.M. Wanless,
\textit{The existence of square integer Heffter arrays},
Ars Math. Contemp. \textbf{13} (2017), 81--93.

\bibitem{LLL} P. Erd\H{o}s and L. Lov\'asz. \textit{Problems and results on $3$-chromatic hypergraphs and some related questions}, Infinite and Finite Sets (A. Hajnal, R. Rado and V.T. Sos, eds.), Keszthely, volume II, pages 609–627. North-Holland, 1975.

\bibitem{GT} J.L. Gross \and T.W. Tucker,
\textit{Topological Graph Theory},
John Wiley, New York, 1987.

\bibitem{HOS} J. Hicks, M.A. Ollis \and J.R. Schmitt,
\textit{Distinct partial sums in cyclic groups: polynomial method and constructive approaches},
J. Combin. Des. \textbf{27} (2019), 369--385.

\bibitem{PM} L. ~Mella and A. ~Pasotti, \textit{Tight globally simple non-zero sum Heffter arrays and biembeddings}, preprint available at https://arxiv.org/abs/2205.10164.

\bibitem{Moh} B. Mohar,
\textit{Combinatorial local planarity and the width of graph embeddings},
Canad. J. Math. \textbf{44} (1992), 1272--1288.

\bibitem{MT} B. Mohar \and C. Thomassen,
\textit{Graphs on surfaces},
Johns Hopkins University Press, Baltimore, 2001.

\bibitem{MP} F. Morini \and M.A. Pellegrini,
\textit{On the existence of integer relative Heffter arrays},
Discrete Math. \textbf{343} (2020), 112088.

\bibitem{MP2} F. Morini \and M.A. Pellegrini,
\textit{Magic rectangles, signed magic arrays and integer $\lambda$-fold relative Heffter
arrays}, Austral. J. Combin. \textbf{80} (2021), 249--280.

\bibitem{MP3} F. Morini \and M.A. Pellegrini,
\textit{Rectangular Heffter arrays: a reduction theorem}, Disc. Math. \textbf{345} (2022) 113073.

\bibitem{O} M.A. Ollis,
\textit{Sequences in dihedral groups with distinct partial products},
Austral. J. Combin. \textbf{78} (2020), 35--60.
\end{thebibliography}
\end{document}